\numberwithin{equation}{section}
\newcommand{\intpart}[1]{\left\lfloor #1 \right\rfloor}
\newcommand{\suppress}[1]{}
\newcommand{\dsm}[1]{\mbox{$\displaystyle #1 $}}
\newcommand{\bigo}[1]{O\!\left(#1\right)}
\newcommand{\smallo}[1]{o\!\left(#1\right)}
\newcommand{\intf}[2]{\left[#1,\, #2\right]}
\def\unz{\underline{Z}}
\def\unb{\underline{B}}
\def\unc{\underline{C}}
\def\al{\alpha}
\def\de{\delta}
\def\De{\Delta}
\def\om{\omega}
\def\cd{\mathcal{D}}
\def\cm{\mathcal{M}}
\def\cn{\mathcal{N}}
\def\cp{\mathcal{P}}
\def\qh{\widehat{q}}
\def\veps{\varepsilon}
\newcommand{\R}{\mbox{$\mathbb{R}$}}
\newcommand{\C}{\mbox{$\mathbb{C}$}}
\newcommand{\N}{\mbox{$\mathbb{N}$}}
\newcommand{\qtx}[1]{\quad\text{#1}\quad}
\renewcommand{\d}{\,\mathrm{d}}
\newcommand{\Li}{\operatorname{Li}}
\def\LR{\Longrightarrow}
\newcommand{\abs}[1]{\left\vert#1\right\vert}
\newcommand{\set}[1]{\left\{#1\right\}}
\newcommand{\gfd}[2]{\genfrac{}{}{0pt}{2}{#1}{#2}}
\newcommand{\sumd}[2]{\sum_{\gfd{#1}{#2}}}
\newcommand{\Pm}{P^-}
\newcommand{\Pp}{P^+}
\newcommand{\intfg}[2]{\left[#1,\, #2\right)}
\newcommand{\oeis}{OEIS}
\newcommand{\ppn}[1]{{}^*\!#1}
\newtheorem{theorem}{Theorem}[section]
\newtheorem{definition}{Definition}[section]
\newtheorem{prop}{Proposition}[section]
\newtheorem{coro}{Corollary}[section]
\newtheorem{lem}{Lemma}[section]
\title{Maximal product of primes whose sum is bounded}
\author{Marc Del\'eglise and Jean-Louis Nicolas 
\footnote{Research partially supported by CNRS, Institut Camille Jordan, UMR 5208.}}
\begin{document}

\maketitle

\hfill
\begin{minipage}{0.5\linewidth}
To the memory of A.A. Karatsuba, on the occasion of his 75th anniversary.
\end{minipage} 
\medskip

\begin{abstract}
If $n$ is a positive integer, let $h(n)$ denote  
the maximal value of the product $q_1 q_2 \ldots q_j$ for all families of 
primes $q_1 < q_2 < \ldots < q_j$ such that  $q_1+q_2+\ldots+q_j \le n$. 
We shall give some properties of this function $h$ and describe an
algorithm able to compute $h(n)$ for any $n$ up to $10^{35}$. 
\end{abstract}

\def\refname{References}

\section{Introduction}\label{parint}

\subsection{Function $h(n)$}\label{parh}

If $n\geq 2$ is an integer, let us define $h(n)$ as the greatest
product of a family of primes $q_1 < q_2 < \ldots < q_j$ the sum of
which does not exceed $n$.

Let $\ell$ be the additive function such that  $\ell(p^\al)=p^\al$ for
$p$ prime and $\al \geq 1$. In other words, if the standard
factorization of $M$ into primes is
$M=q_1^{\al_1}q_2^{\al_2}\ldots q_j^{\al_j}$, we have 
$\ell(M)=q_1^{\al_1}+q_2^{\al_2}+\ldots +q_j^{\al_j}$ and $\ell(1)=0$. If $\mu$
denotes the Möbius function, $h(n)$ can also be defined as
\begin{equation}\label{h}
h(n)=\max_{\substack{\ell(M) \leq n\\\mu(M)\neq 0}} M.
\end{equation}
Note that 
\begin{equation}\label{lhn<=n}
\ell(h(n))\leq n.
\end{equation}
From the unicity of the factorization of $h(n)$ into primes, the
maximum in \eqref{h} is attained in only one point. It is convenient to set
\[
h(0)=h(1)=1.
\]
$(h(n))_{n \ge 1}$ is sequence A159685 of the \oeis\ (Online
Encyclopedy of Integer Sequences). A table of the $50$ first values of
$h(n)$ is given at the end of the paper.  A larger table may be found
on the authors's web sites \cite{DEL.WEB,NIC.WEB}.

In \cite{LAN}, Landau has introduced  the function $g(n)$ as the
maximal order of an element in the symmetric group ${\mathfrak S}_n$;
he has shown that
\begin{equation}\label{g}
g(n)=\max_{\ell(M) \leq n}M.
\end{equation}
The introductions of \cite{DNZ} and \cite{PPLUS} recall the main
properties of Landau's function $g(n)$ which is mentionned as entry
A002809 in \cite{SLOPLO}. From \eqref{h} and \eqref{g}, it follows
that
\begin{equation}\label{h<g}
h(n)\leq g(n),\quad (n\geq 0).
\end{equation}

In this article, we shall give some properties of $h(n)$ and describe
an algorithm able to calculate $h(n)$ for any $n$ up to $10^{35}$.

\subsection{Notation}

\begin{itemize}
\item
We denote by $\N$ the set of non-negative integers.
\item
The symbol $p$ will always denote a prime number.
\item
For every arithmetic function $f\ : \N \to \C$,
we define
\begin{equation}\label{defsfx}
\pi_{f}(x) = \sum_{p \le x,\ p \text{ prime}} f(p) 
\end{equation}
\item In particular, for $f(n) = 1$, we will note, as usual
$\pi(x) = \pi_{1}(x)$ the number of primes up to $x$.
\item 
For $f(n) = n$ we define
\begin{equation}\label{defpi1}
\pi_{id}(x) = \sum_{p \le x,\ p \text{ prime}} p 
\end{equation}
\item
We denote by $p_j$ the $j$-th prime and we set
$\sigma_0=0$, $N_0=1$ and, for $j \ge 1$,
\begin{equation}\label{Sj}
\sigma_j= \pi_{id}(p_j) = p_1+p_2+\ldots + p_j,\qquad N_j=p_1p_2\ldots p_j.  
\end{equation}
In \S\,\ref{parfires}, for all
$j\geq 1$, we shall prove that
$h(\sigma_j)=N_j.$
\item
If $m$ is an integer, we denote by $m^\star$ the smallest prime $p$
satisfying $p \geq m$ and, if $m \ge 2$, by $\ppn m$ the largest
prime $p$ satisfying $p \leq m$.
\item
$P^+(m)$ (resp. $P^-(m)$) will denote the largest (resp. smallest) prime
factor of $m \ge 2$. It is convenient to set 
$\Pp(1) = -\infty$, $\Pm(1) = +\infty$.
\item
$\omega(n)$ is the number of distinct prime factors of $n$
and $\Omega(n)$ the number of prime factors of $n$, counted
with multiplicity.
$\mu(n)$ is M\"obius's function.
\item
For $x > 1$, $\log_2(x) = \log\log x$.
\item
$\Li$ is the integral logarithm defined for $x > 1$ by
\[
\Li(x) = \lim_{\veps \to 0,\, \veps > 0}\
\int_{0}^{1-\veps} + \int_{1+\veps}^{x} \frac{\d t}{\log t}
=  \gamma + \log_2 x + \sum_{n \ge 1} \frac{(\log x)^n}{n n!}
\]
where $\gamma$ is Euler's constant.
\end{itemize}

\subsection{Functions $h_j(n)$}\label{parhj}

For $n \geq 0$, let $k=k(n)$ be the non-negative integer defined by
\begin{equation}\label{k}
\sigma_k = \pi_{id}(p_k) \leq n < \pi_{id}(p_{k+1}) = \sigma_{k+1}.
\end{equation}
It is the maximal number of prime factors of $h(n)$. For $0\leq j \leq
k=k(n)$, let us set
\begin{equation}\label{hj}
h_j(n)=\max_{\substack{\ell(M) \leq n\\\mu(M)\neq 0,\;\om(M)=j}} M
\end{equation}
where $\om(M)$ is the number of prime factors of $M$. 
For $n \ge 0$, we have
\begin{equation}\label{h0}
h_0(n) = 1
\end{equation}
while, for $n \ge 2$, we have
\begin{equation}\label{h1}
h_1(n) = \ppn n \ge 2.
\end{equation}
Note that 
\begin{equation}\label{lhjn<=n}
\ell(h_j(n))\leq n.
\end{equation}
In \S\,\ref{parinchj}, we
prove that, for all $n$'s, the sequence $h_j(n)$ is increasing
on $j$, so that
\begin{equation}\label{h=hk}
h(n)=h_k(n),\quad (n\geq 0).
\end{equation}
Our proof is not that simple. A possible reason is that this
increasingness relies on the properties of the whole set of primes
$\cp$. Let $\cp'$ be a subset of $\cp$ and $\cn_{\cp'}$ the set of
integers whose prime factors belong to $\cp'$. We may consider
\begin{equation}\label{hjP'}
h_j(n,\cp')=\max_{\substack{M\in \cn_{\cp'},\;\ell(M) \leq
    n\\\mu(M)\neq 0,\;\om(M)=j}} M\ .
\end{equation}
By choosing $\cp'=\{2,3,11,13,17,19,23,\ldots\}=\cp \setminus \{5,7\}$, 
we observe that 
\[
h_2(24,\cp')=11\cdot 13 =143 \;\; > \;\; h_3(24,\cp')=2\cdot 3\cdot
19=114.
\]

In \S\,\ref{parbouhj}, we give an upper bound for $h_j(n)$
 which will be useful in \S\,\ref{parinchj} where our proof of the
 increasingness of $h_j$ is given. In \eqref{hj}, $h_j(n)$ can be
 considered as the solution of a problem of optimization with prime
 variables. The upper bound of $h_j(n)$ is obtained by relaxing some
 constraints so that certain variables are no longer primes, but only integers.

\subsection{Elementary computation of $h(n)$ and $h_j(n)$}\label{parcomph}

The naive algorithm described in \cite{DNZ} to
compute $g(n)$ can be easily adapted to calculate $h(n)$ for $1\leq n
\leq N$. Note that, for the prime factors of $h(n)$, Corollary 
\ref{coropp'h} below furnishes the upper bound
\begin{equation*}
\Pp(h(n)) \le p_{k(n)+1} + p_{k(n)+2}.
\end{equation*}
It also can be adapted to compute $h_j(n)$. For $r\geq j \geq 1$ and
$n\geq \sigma_j$, let us define
\[
h_j^{(r)}(n)=\max_{\substack{P^+(M)\leq p_r,\;\ell(M) \leq n\\
\mu(M)\neq 0,\;\om(M)=j}} M\ .
\]
We have the induction relation
\[
h_j^{(r+1)}(n)=\max(h_j^{(r)}(n),p_{r+1} h_{j-1}^{(r)}(n-p_{r+1})).
\]
Indeed, either $p_{r+1}$ does not divide $h_j^{(r+1)}(n)$, and 
$h_j^{(r+1)}(n) = h_j^{(r)}(n)$ holds, or $p_{r+1}$ divides
$h_j^{(r+1)}(n)$, and $h_j^{(r+1)}(n)=p_{r+1}
h_{j-1}^{(r)}(n-p_{r+1})$, which implies $n\geq p_{r+1}+\sigma_{j-1}$.

Moreover, if $p_r\geq n$, we have $h_j^{(r)}(n)=h_j(n)$,
$h_r^{(r)}(n)=N_r$ and $h_1^{(r)}(n)=\ppn n$ for $n < p_r$ while,
for $n\geq p_r$, $h_1^{(r)}(n)=p_r$ holds. So, we may write
{\bf algorithm \ref{algohj}}, which has been used to
calculate the table in appendix. The merging and pruning method
described in \cite[\S 2.2]{DNZ} can be used to improve the running
time.

\renewcommand\algorithmicindent{2em}

\begin{algorithm}
\caption{Computation of $h_j(n)$ for $2 \le n \le nmax$ and $1 \le j
  \le k(n)$} 
\label{algohj}
\begin{algorithmic}
\STATE{{\bf Procedure} ComputeHj(nmax)}
\STATE{$r = 1;\ p = p_r;\ kmax = k(nmax);\ pmax = p_{kmax+1}+p_{kmax+2}$}
\WHILE {$p \le pmax$}
\FOR{$n$ from  $\sigma_r$ to $nmax$}{
\STATE{$H[r,n] = N_r$}}\;
\ENDFOR
\STATE{$jmax = \min(r-1,kmax)$}
\FOR{$j$ from $jmax$ by $-1$ to $2$}
\FOR{$n$ from $nmax$ by $-1$ to $p+\sigma_{j-1}$}
\STATE{$H[j,n] = \max(H[j,n],p*H[j-1,n-p])$}
\ENDFOR
\ENDFOR
\FOR{$n$ from $p$ to $nmax$}
\STATE{$H[1,n] = p$;}
\ENDFOR
\STATE{$r = r+1;\ p = p_r$}
\ENDWHILE
\end{algorithmic}
\end{algorithm}

In \S\, \ref{paralgoh}, a more sophisticated algorithm to calculate
$h(n)$ is given. It is based on a fast method to compute
$\pi_{id}(x)$, which is explained in \S\, \ref{parpifcomp}.

\section{Some lemmas}\label{lemmas}

\begin{lem}\label{lem11/8}
If $m\geq 2$ is an integer, let us denote by $m^\star$ (resp. ${
}^\star m$) the smallest 
(resp. largest) prime $p$ satisfying $p\geq m$ (resp. $p\leq m)$. Then
\[
m^\star \leq \frac{11}{8}m \quad \text{ and } \quad \ppn m \geq
\frac{7}{10} m
\]
hold. 
\end{lem}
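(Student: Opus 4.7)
The lemma asserts that between $m$ and $\frac{11}{8}m$ (resp. between $\frac{7}{10}m$ and $m$) there is always a prime, so both bounds are finite strengthenings of Bertrand's postulate. My plan is to combine a quantitative Bertrand-type estimate for large $m$ with direct verification in a finite range.

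First, I would reformulate both bounds as prime-gap statements: $m^\star \le \frac{11}{8}m$ means the interval $\intf{m}{\frac{11}{8}m}$ contains a prime, and $\ppn{m} \ge \frac{7}{10}m$ means the interval $\intf{\frac{7}{10}m}{m}$ contains a prime. Since $\frac{11}{8} = 1.375$ and $\frac{10}{7} \approx 1.4286$, each is noticeably weaker than the asymptotic density of primes allows, so the only real work is at small $m$.

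Next, for large $m$ I would invoke a standard quantitative Bertrand postulate, for instance Nagura's theorem, which guarantees a prime in $\intfg{x}{\frac{6}{5}x}$ for every $x \ge 25$. Applying this with $x = m$ yields a prime $p$ with $m \le p < \frac{6}{5}m < \frac{11}{8}m$, so the first bound holds for $m \ge 25$. Applying it with $x = \intpart{\frac{5}{6}m} + 1$ (still $\ge 25$ once $m$ is large enough, say $m \ge 30$) produces a prime $p$ with $\frac{5}{6}m < p \le m$, and since $\frac{5}{6} > \frac{7}{10}$, the second bound follows too in that range.

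Finally, I would dispose of the remaining integers $2 \le m \le 29$ (or whatever explicit cutoff Nagura's bound dictates) by inspecting the list of primes up to, say, $50$. This is a short finite check: for each such $m$ one records $m^\star$ and $\ppn{m}$ and verifies the two numerical inequalities. The only mildly delicate part is noticing the tight cases --- $m^\star = \frac{11}{8}m$ at $m = 8$ (with $8^\star = 11$) and $\ppn{m} = \frac{7}{10}m$ at $m = 10$ (with $\ppn{10} = 7$) --- which is presumably why the constants $\frac{11}{8}$ and $\frac{7}{10}$ were chosen. The main (minor) obstacle is simply marshaling a quantitative Bertrand statement with an explicit threshold so that the hand check stays finite; once that is in place, both inequalities follow by a brief case analysis.
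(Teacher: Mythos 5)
Your proposal is structurally the same as the paper's: prove that the prime gap ratio is small enough once $m$ is large, then dispose of the finite range by inspection. The difference lies in which quantitative Bertrand statement you invoke. The paper uses Dusart's result (for $x \ge 396738$, the interval $[x, x + x/(25\log^2 x)]$ contains a prime), which yields $p_{i+1}/p_i < 1.00025$ for $p_i \ge 396833$ and forces a finite check up to $396833$; you use Nagura's theorem (a prime in $[x, \tfrac{6}{5}x)$ for $x \ge 25$), which brings the finite check down to $m \le 29$ or so. Your route buys a much smaller hand check at the cost of citing a theorem whose proof is harder than the target inequalities deserve; the paper's route uses a sharper (and arguably more modern) reference but relies on machine verification over a large range. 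Both are perfectly valid.

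One small technical slip in your second application: with $x = \lfloor\tfrac{5}{6}m\rfloor + 1$, Nagura only gives $p < \tfrac{6}{5}(\lfloor\tfrac{5}{6}m\rfloor + 1) \le m + \tfrac{6}{5}$, so the produced prime could be $m+1$, not a prime $\le m$ as you claim. This is easily repaired: apply Nagura directly with the real number $x = \tfrac{5}{6}m$ (so $\tfrac{6}{5}x = m$ exactly), giving a prime $p$ with $\tfrac{5}{6}m \le p < m$, and handle the case $m$ prime separately (where $\ppn m = m$ and nothing to prove); the threshold then becomes $\tfrac{5}{6}m \ge 25$, i.e.\ $m \ge 30$. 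Your identification of the extremal cases ($m=8$ for $m^\star$, $m=10$ for $\ppn m$) is correct and confirms the constants are sharp.
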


\begin{proof}
We use the result of \cite{DUS10}: for $x\geq 396738$, the interval 
$[x,x+\frac{x}{25 \log^2 x}]$ contains a prime number. As $396833$ is prime, 
we deduce that, for $p_i \geq 396833$,
\begin{equation}\label{lem11/81}
\frac{p_{i+1}}{p_i}\leq 1+\frac{1}{25 \log^2 p_i}
\leq 1+ \frac{1}{25 \log^2 396833} < 1.00025 <\frac{11}{8} < \frac{10}{7} .
\end{equation} 

If $m$ is prime, $m^\star=\ppn m=m$ holds, while, if $m$ is not prime, 
we define $p_i$ by $p_i < m < p_{i+1}$; we have 
$m^\star=p_{i+1}$, $ \ppn m=p_i$,
\[
\frac{m^\star}{m}\leq \frac{p_{i+1}}{p_i+1} <  \frac{p_{i+1}}{p_i},
\qquad
\frac{\ppn m}{m} \geq \frac{p_i}{p_{i+1}-1} > \frac{p_i}{p_{i+1}}
\]
and, if $p_i\geq 396833$,  the result follows from
\eqref{lem11/81}. Finally, it remains to check that 
\[
\frac{p_{i+1}}{p_i+1} \leq \frac{11}{8} \quad \text{ and } \quad
\frac{p_i}{p_{i+1}-1} \geq \frac{7}{10}
\]
hold for all $p_i$'s satisfying $2 \leq p_i < 396833$.
\end{proof}

\begin{lem}\label{lemp+p'}
Let $p < p'$ be two primes. There exists a third prime $p''$
satisfying
\begin{equation}\label{ineqp''}
p+p' \leq p'' \leq pp'-p+1.
\end{equation}
\end{lem}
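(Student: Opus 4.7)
The plan is to rely on Bertrand's postulate: for every integer $n\geq 1$ there is a prime strictly between $n$ and $2n$. The interval $[p+p',\, pp'-p+1]$ has length $pp'-2p-p'+1$, which degenerates only when $p=2$ (then the length is $p'-3$, vanishing at $p'=3$), so I would split the argument into two cases accordingly.

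\medskip

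\textbf{Case $p=2$.} Here $p'\geq 3$, and the interval is $[p'+2,\,2p'-1]$. I would apply Bertrand's postulate with $n=p'$ to produce a prime $p''$ with $p'<p''<2p'$. Since $p'\geq 3$ is odd, the prime $p''>p'$ is also odd; consequently $p''\geq p'+2$, and moreover $p''\leq 2p'-1$ because $2p'$ is even. Hence $p''$ lies in the required interval.

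\medskip

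\textbf{Case $p\geq 3$.} Now $p'\geq 5$. I would apply Bertrand's postulate with $n=p+p'-1\geq 7$ to obtain a prime $p''$ satisfying
\[
p+p' \;\leq\; p'' \;\leq\; 2(p+p'-1)-1 \;=\; 2p+2p'-3.
\]
The lower bound is already the one needed, and the upper bound $2p+2p'-3\leq pp'-p+1$ rearranges to $pp'-3p-2p'+4\geq 0$, i.e.\ $(p-2)(p'-3)\geq 2$, which holds since $p-2\geq 1$ and $p'-3\geq 2$.

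\medskip

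There is no real obstacle: the only thing to watch is that Bertrand alone is slightly too coarse to give both inequalities in one shot when $p=2$, which is why a parity observation is inserted in that case. Everything else reduces to the elementary factorization $(p-2)(p'-3)$, and no deeper prime-gap estimate (such as the Dusart bound \eqref{lem11/81} used in Lemma~\ref{lem11/8}) is required.
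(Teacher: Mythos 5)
Your proof is correct but takes a genuinely different route from the paper. The paper relies on the numerical bound $m^\star \leq \tfrac{11}{8}m$ from Lemma~\ref{lem11/8} (itself derived from Dusart's result on short intervals containing primes), verifies $\tfrac{11}{8}(p+p') \leq pp'-p+1$ in the generic ranges $p\geq 3,\ p'\geq 5$ and $p=2,\ p'\geq 11$, and checks the three remaining cases $p=2$, $p'\in\{3,5,7\}$ by hand. You instead use only Bertrand's postulate: when $p\geq 3$ (so $p'\geq 5$), applying it with $n=p+p'-1$ gives $p+p'\leq p''\leq 2p+2p'-3$, and the required upper bound follows from the clean rearrangement $pp'-3p-2p'+4=(p-2)(p'-3)-2\geq 0$; when $p=2$ you apply Bertrand with $n=p'$ and use the parity of $p'$ and $p''$ to squeeze the prime into the exact window $[p'+2,\,2p'-1]$, which is tight (e.g.\ $p'=3$ gives the singleton $\{5\}$). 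Your argument is more elementary and self-contained, at the modest cost of a two-case split and a parity observation. The paper's heavier machinery is overkill for this particular lemma, but it reuses an estimate that is genuinely needed elsewhere: for instance in Lemma~\ref{lemp+p'5/6}, where the interval $[p+p',\,\tfrac{5}{6}pp'-p]$ (take $p=3$, $p'=7$, giving $[10,\,14.5]$) is too short for Bertrand alone, so the sharper $\tfrac{11}{8}$ bound is indispensable there.
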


\begin{proof}
Let us show that $p''=(p+p')^\star$ satisfies \eqref{ineqp''}. By Lemma
\ref{lem11/8}, it suffices to prove that $\frac{11}{8} (p+p') \leq
pp'-p+1$, i.e:
\begin{equation}\label{pp'<}
pp'\left( 8-\frac{11}{p}-\frac{19}{p'} + \frac{8}{pp'}\right) \geq 0.
\end{equation}
If $p\geq 3$ and $p'\geq 5$, we have $\frac{11}{p}+\frac{19}{p'}\leq
\frac{11}{3}+\frac{19}{5} < 8$ and \eqref{pp'<} holds. Similarly, if
$p=2$ and $p' \geq 11$, the inequality $\frac{11}{p}+\frac{19}{p'} \leq
\frac{11}{2}+\frac{19}{11} < 8$ implies \eqref{pp'<}. In the three
remaining cases, $p=2$ and
$p'\in \{3,5,7\}$, it is easy to check that $p''=(p+p')^\star$ satisfies \eqref{ineqp''}.
\end{proof}

\begin{lem}\label{lemp+p'5/6}
Let $p$ and $p'$ be two prime numbers satisfying $3 \leq p <
p'$ and $pp' \ne 15$. There exists a prime $p''$ such that
\begin{equation}\label{ineqp''5/6}
p+p' \leq p'' \leq \frac 56 pp'-p.
\end{equation}
\end{lem}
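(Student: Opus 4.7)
The plan is to imitate the proof of Lemma~\ref{lemp+p'}: set $p''=(p+p')^\star$, the smallest prime at least $p+p'$, so that the left inequality in \eqref{ineqp''5/6} is immediate. For the right inequality, Lemma~\ref{lem11/8} gives
\[
p''\leq \tfrac{11}{8}(p+p'),
\]
and I would reduce the problem to the elementary inequality $\tfrac{11}{8}(p+p')\leq \tfrac{5}{6}pp'-p$, which, after multiplication by $24$ and division by $pp'$, becomes
\[
\frac{33}{p}+\frac{57}{p'}\leq 20.
\]

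I would then dispose of this by a short case analysis on $p$. For $p\geq 5$ we have $p'\geq 7$ and $\tfrac{33}{5}+\tfrac{57}{7}<20$, with the bound improving for larger primes. For $p=3$, the inequality becomes $\tfrac{57}{p'}\leq 9$, i.e.\ $p'\geq 19/3$, so it holds for every prime $p'\geq 7$. The only configuration left out is $(p,p')=(3,5)$, which is precisely what the hypothesis $pp'\neq 15$ excludes.

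The main, and very mild, obstacle is to recognise that this case is a genuine exception rather than a defect of the method. One can confirm it directly: for $p=3$, $p'=5$ the target interval $\intf{p+p'}{\tfrac{5}{6}pp'-p}=\intf{8}{19/2}$ contains no prime, so no choice of $p''$ could satisfy \eqref{ineqp''5/6}. Apart from this, the argument is structurally identical to that of Lemma~\ref{lemp+p'}; the tighter coefficient $5/6$ in place of $1$ merely forces one small pair out.
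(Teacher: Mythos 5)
Your proof is correct and follows the paper's argument exactly: take $p''=(p+p')^\star$, bound it by $\tfrac{11}{8}(p+p')$ via Lemma~\ref{lem11/8}, and reduce to $33/p+57/p'\leq 20$, valid for $p\geq 3$, $p'\geq 7$. Your added remark that $(3,5)$ is a genuine exception (the interval $[8,19/2]$ is prime-free) is a nice sanity check not present in the paper, but otherwise the two proofs coincide.
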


\begin{proof}
The proof is similar to the one of the preceding lemma. From
Lemma \ref{lem11/8}, to show that $p''=(p+p')^\star$ satisfies
\eqref{ineqp''5/6}, it suffices to show that $\frac{11}{8}(p+p')\leq
\frac 56 pp'-p$, i.e. $33/p+57/p' \leq 20$, which evidently holds for
$p\geq 3$ and $p'\geq 7$.
\end{proof}

\begin{lem}\label{lempi+pi}
For all $i \geq 2$, the following inequality
\begin{equation}\label{pipi1}
p_i+p_{i-1} \leq p_{2i-1}
\end{equation}
holds. Moreover, let $b$ be a positive integer; there exists a
positive integer $i_0=i_0(b)$ such that we have
\begin{equation}\label{pipib}
p_i+p_{i-1} <  p_{2i-b} \quad \text{ for } i\geq i_0(b).
\end{equation}
The table below gives some values of $i_0(b)$
\[
\begin{array}{|r|ccccccccccccccc|}
\hline
b=&1&2&3&4&5&6&7&8&9&10&12&13&18&30&3675\\
\hline
i_0=&3&4&7&8&18&19&27&28&36&39&50&53&85&149&33127\\
\hline
\end{array}
\]
\end{lem}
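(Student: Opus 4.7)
The plan is to split the proof into an asymptotic bound that holds for all sufficiently large $i$ together with a finite verification covering the remaining small values; the numerical entries of the table are then obtained by a sieve-based search over the transition range.

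For the asymptotic step, I would invoke an explicit form of the prime number theorem, such as the Rosser--Schoenfeld--Dusart estimates
\[
n(\log n + \log\log n - 1) \le p_n \le n(\log n + \log\log n),
\]
valid for all $n$ above some effective threshold. Applying the upper bound to $p_i$ and $p_{i-1}$ and the lower bound to $p_{2i-b}$, and then expanding $\log(2i-b) = \log i + \log 2 + O(1/i)$ together with $\log\log(2i-b) - \log\log i = O(1/\log i)$, I would arrive at an estimate of the shape
\[
p_{2i-b} - (p_i + p_{i-1}) = (2\log 2)\, i \;-\; (b-1)\log(2i) \;+\; O\!\left(\frac{i}{\log i}\right).
\]
Since $\log 2 > 0$ and the main term grows linearly in $i$ while the penalty from $b$ grows only logarithmically, for each fixed $b$ the right-hand side becomes strictly positive once $i$ exceeds a computable threshold $I(b)$. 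This simultaneously yields \eqref{pipi1} for $i \ge I(1)$ and shows the mere existence of $i_0(b)$ in \eqref{pipib} for every positive integer $b$.

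For the finite part, it suffices to sieve the primes up to roughly $p_{2I(b)}$ and test the inequality $p_i + p_{i-1} < p_{2i-b}$ for each $i$ from $2$ to $I(b)$, recording the least index after which the inequality is never violated again; this pins down the tabulated values of $i_0(b)$. In particular, for \eqref{pipi1} the only case genuinely needing a separate check is $i = 2$, where $p_2 + p_1 = 3 + 2 = 5 = p_3$, confirming the non-strict form at the boundary.

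The main obstacle is calibrating the effective threshold $I(b)$: the $O(i/\log i)$ error must be made quantitatively small enough that the linear gain $(2\log 2)\, i$ dominates $(b-1)\log(2i)$ inside a computationally manageable range. For $b = 3675$ (where the table asserts $i_0 = 33127$) this already requires sieving primes of order $p_{2i_0}$, which is around $10^5$ to $10^6$; any looser explicit constants in the prime estimates would enlarge the verification range correspondingly.
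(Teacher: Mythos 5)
Your overall strategy (explicit prime bounds for large $i$, a finite sieve check for small $i$) matches the paper's, but the specific inequalities you chose are not sharp enough, and this breaks the asymptotic step. You propose
\[
n(\log n + \log\log n - 1) \le p_n \le n(\log n + \log\log n).
\]
To bound $p_{2i-b}-(p_i+p_{i-1})$ from below you must apply the \emph{upper} bound to $p_i+p_{i-1}$ and the \emph{lower} bound to $p_{2i-b}$. Writing $L=\log i+\log\log i$ and approximating $\log(2i-b)\approx \log i+\log 2$, this gives
\[
p_{2i-b}-(p_i+p_{i-1}) \ \gtrsim\ 2i(\log 2 - 1)\ -\ (b-1)(L+\log 2)\ +\ \cdots
\]
and the leading coefficient $2(\log 2 - 1)\approx -0.614$ is \emph{negative}. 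The lower bound obtained this way is never positive, so it proves nothing. The displayed claim in your proposal, $p_{2i-b}-(p_i+p_{i-1}) = (2\log 2)\,i - (b-1)\log(2i) + O(i/\log i)$, would only follow if the spread between your two explicit bounds were $o(i)$; in fact that spread is exactly of order $i$ (the constants $0$ and $-1$ differ by $1$), and this $O(i)$ error swamps the $(2\log 2)\,i$ main term you are hoping for.

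The paper avoids this by using Dusart's sharper upper bound $p_i \le i(\log i + \log\log i - \alpha)$ with $\alpha = 0.9484$ (valid for $i\ge 39017$), for which the leading coefficient becomes $2(\log 2 - 1 + \alpha)\approx 1.283 > 0$. The key arithmetic fact is that one needs $\alpha > 1 - \log 2 \approx 0.307$, which your bound with $\alpha=0$ misses. Once you replace the upper bound by a Dusart-type estimate with $\alpha$ close to $1$, your outline goes through exactly as in the paper, with the asymptotic threshold then pushed to $i\approx 3.9\times 10^4$ rather than the $10^5$--$10^6$ range you anticipated for $b = 3675$, since the relevant quantity is the index $i$, not the size of $p_{2i}$.
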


\begin{proof}
We start from the two inequalities
\begin{equation}\label{lip1}
p_i \leq i(\log i + \log \log i -\al), \quad (\al=0.9484,\;i\geq 39017),
\end{equation}
\begin{equation}\label{lip2}
p_i \geq i(\log i + \log \log i -1), \quad (i\geq 2)
\end{equation}
which can be found in \cite{DUSMC}. From \eqref{lip1}, it follows that
\begin{equation}\label{lip3}
p_{i-1}+p_i \leq (2i-1)(\log i + \log \log i -\al), \quad (i\geq 39018)
\end{equation}
while, if $i\geq \max(2,b)$, which implies $2i-b \ge 2$ and
$i \ge b$, \eqref{lip2} gives
\begin{equation}\label{lip5}
p_{2i-b}\geq (2i-b)(\log i +\log 2+\log\left(\frac{2i-b}{2i}\right)+ \log \log i -1).
\end{equation}
By using the inequality $\log t \leq t-1$, we get
\[
\log\left(\frac{2i-b}{2i}\right) =-\log\left(\frac{2i}{2i-b}\right)
\geq -\left(\frac{2i}{2i-b}-1\right) = -\frac{b}{2i-b}
\]
and \eqref{lip5} yields
\begin{equation}\label{lip6}
p_{2i-b}\geq (2i-b)(\log i + \log \log i +\log 2 -1) -b.
\end{equation}
Under the condition 
\begin{equation}\label{lip7}
i\geq \max(39018,b),
\end{equation}
the substraction of \eqref{lip3} from \eqref{lip6} gives
\begin{eqnarray}\label{lip8}
p_{2i-b}-p_{i-1}-p_i &\geq & (\log i+\log \log i+ \log 2)(1-b)\notag\\
& & \quad +\ 2i(\log 2-1+\al) -\log 2 -\al\notag\\
> \;\; (\log i\hspace{-3mm}&+&\hspace{-3mm}\log \log i+ \log 2)
\left[ \frac{1.283 \;i-1.642}{\log i+\log \log i+ \log 2}-(b-1)\right].
\end{eqnarray}
Now, the two functions $t\mapsto t/(\log t + \log \log t + \log 2)$
and $t\mapsto -1/(\log t + \log \log t + \log 2)$ are increasing for
$t\geq e^2$; choosing $i_1=39018$  and 
\begin{equation}\label{lip9}
b=\left\lfloor 1+ \frac{1.283 \;i_1-1.642}{\log i_1+\log \log i_1+
    \log 2}\right\rfloor =\lfloor3675.52\ldots\rfloor=3675
\end{equation}
shows that, for $i\geq i_1$, \eqref{lip7} is satisfied and that in
\eqref{lip8}, the bracket is positive. Therefore, 
\eqref{lip8} proves $p_i+p_{i-1} < p_{2i-3675}$ for $i\geq i_1=39018$.

To determine
the  entries of the table, for all $i$'s up to $39018$, we have
calculated $b_i=2i-1-\pi(p_i+p_{i-1})$ which is 
the smallest integer  such that $p_{i-1}+p_i <
p_{2i-b_i}$. Further, for each $b$ in the table, we have determined
$i_0(b)$ which is
the smallest integer $i_0$ such that, for $i_0(b)\leq i \leq 39018$, $b_i
\geq b$ holds.

As $i_0(1)=3$, for all $i\geq 3$, $p_i+p_{i-1} < p_{2i-1}$ holds. So,
\eqref{pipi1} follows from $p_2+p_1=3+2=5=p_3$.
\end{proof}

\renewcommand{\d}{\,\mathrm{d}}
\newcommand{\dd}[1]{\dfrac{\d}{\d #1}}

\begin{lem}\label{approxpi1}
Under Riemann hypothesis, for all $x \ge 41$ we have
\begin{equation}\label{equapprox}
\abs{\pi_{id}(x) - \Li(x^2)} \le \frac{5}{24\pi}x^{3/2} \log x.
\end{equation}
\end{lem}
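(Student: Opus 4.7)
The plan is to reduce the bound on $\pi_{id}(x) - \Li(x^2)$ to Schoenfeld's RH-conditional estimate
\[
\abs{\pi(x) - \Li(x)} \le \frac{\sqrt{x}\,\log x}{8\pi} \qquad (x \ge 2657)
\]
through an explicit identity linking $\pi_{id}(x)$ to $\pi(x)$ and $\Li(x^2)$ to $\Li(x)$. First, Abel summation gives
\[
\pi_{id}(x) = x\,\pi(x) - \int_2^x \pi(t)\,\d t .
\]
Integrating $\Li(t)$ by parts yields $\int_2^x \Li(t)\,\d t = x\,\Li(x) - 2\,\Li(2) - \int_2^x \frac{t}{\log t}\,\d t$, and the substitution $s=t^2$ shows $\int_2^x \frac{t}{\log t}\,\d t = \Li(x^2) - \Li(4)$. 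Writing $\pi = \Li + (\pi-\Li)$ and combining everything produces the key identity
\[
\pi_{id}(x) - \Li(x^2) = \bigl(2\,\Li(2) - \Li(4)\bigr) + x\bigl(\pi(x) - \Li(x)\bigr) - \int_2^x \bigl(\pi(t) - \Li(t)\bigr)\,\d t.
\]

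For $x \ge 2657$, Schoenfeld's bound controls the first error term by $\frac{x^{3/2}\log x}{8\pi}$. For the second, I would split $\int_2^x = \int_2^{2657} + \int_{2657}^x$: the first piece is a fixed computable constant, while the second is bounded, via Schoenfeld once more, by $\frac{1}{8\pi}\int_{2657}^x \sqrt{t}\,\log t\,\d t$. A partial integration gives
\[
\int_{2657}^x \sqrt{t}\,\log t\,\d t = \tfrac{2}{3}\, x^{3/2}\log x - \tfrac{4}{9}\, x^{3/2} - K,
\]
with $K$ an explicit positive constant. Adding the two dominant contributions produces $\bigl(\tfrac{1}{8\pi} + \tfrac{1}{12\pi}\bigr)x^{3/2}\log x = \frac{5}{24\pi}\,x^{3/2}\log x$, and the surplus $-\tfrac{1}{18\pi}\,x^{3/2}$ has the right sign to absorb $\abs{2\,\Li(2) - \Li(4)}$, $\int_2^{2657}\abs{\pi(t)-\Li(t)}\,\d t$ and $\tfrac{K}{8\pi}$ once $x$ exceeds some explicit threshold $x_0$.

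For the remaining window $41 \le x < x_0$ the inequality is verified by direct numerical computation: $\pi_{id}(x)$ is a step function jumping at primes, while both $\Li(x^2)$ and the right-hand side $\frac{5}{24\pi}\,x^{3/2}\log x$ are smooth and increasing, so the check reduces to finitely many one-sided evaluations at primes $p \le x_0$. The main obstacle is the precise bookkeeping in the middle paragraph: the constant $\frac{5}{24\pi}$ is sharp, so the negative $x^{3/2}$-term must be used carefully to dominate all the remaining $O(1)$ and large-but-bounded quantities, and the cut-off $x_0$ must come out small enough to make the final numerical verification feasible.
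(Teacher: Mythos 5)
Your proposal is correct and is essentially the same proof as the paper's: the paper writes $\pi_{id}(x)=\pi_{id}(x_0)+\int_{x_0^-}^{x}t\,\d[\pi(t)]$ with $x_0=2657$ and integrates by parts in the Stieltjes sense, which is exactly your Abel-summation identity anchored at $2657$ instead of at $2$, so the constants $2\Li(2)-\Li(4)$ and $\int_2^{2657}(\pi-\Li)$ in your version are folded into $\pi_{id}(x_0)-\Li(x_0^2)-x_0 r(x_0)$. The remaining arithmetic — Schoenfeld's $\frac{1}{8\pi}\sqrt t\log t$ bound, the antiderivative $\frac{2}{3}t^{3/2}(\log t-\frac{2}{3})$, the split $\frac{1}{8\pi}+\frac{1}{12\pi}=\frac{5}{24\pi}$, the surplus $-\frac{1}{18\pi}x^{3/2}$ absorbing the constants, and the finite monotonicity check for $41\le x\le 2657$ — is identical.
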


\begin{proof}
Let us define $r(x)$ by $\pi(x) = \Li(x) + r(x)$
and assume the Riemann hypothesis. Then cf. \cite[(6.18)]{SCH76}~:
\begin{equation}\label{riemh}
\abs{r(x)} = \abs{\pi(x)-\Li(x)} \le\frac{1}{8\pi}\sqrt{x}\log x
\quad (\text{for } x \ge 2657).
\end{equation}

Let us denote $x_0=2657$. Then, from \eqref{defpi1}, Stieltjes's
integral gives~:
\begin{eqnarray*}
\pi_{id}(x) &=& \pi_{id}(x_0) + \int_{x_0^-}^{x} t\, \d [\pi(t)]\\
&=& \pi_{id}(x_0) + \int_{x_0}^{x} t \d (\Li(t))  + \int_{x_0^-}^{x} t \d [r(t)]\\
&=& \pi_{id}(x_0) + \Li(x^2)-\Li(x_0^2)
+ \left. t r(t)\right|_{x_0}^{x} - \int_{x_0}^{x} r(t) \d t.
\end{eqnarray*}
With \eqref{riemh}, it comes
\begin{eqnarray*}
\abs{\pi_{id}(x) -\Li(x^2)} &\le& \abs{\pi_{id}(x_0)-\Li(x_0^2) -x_0 r(x_0)}
+ \frac{x^{3/2}\log x}{8\pi} + \int_{x_0}^{x} \frac{\sqrt t \log
  t}{8\pi } \d t
\end{eqnarray*}
and, using
\dsm{
\int \sqrt t \log t = \frac{2}{3}t^{3/2} \left(\log t - \frac{2}{3} \right),
}
\begin{multline}\label{majpi1minLi2}
\abs{\pi_{id}(x) -\Li(x^2)} 
\le \frac{5}{24\pi}x^{3/2}\log x - \frac{1}{18\pi} x^{3/2}\\
+ \abs{\pi_{id}(x_0)-\Li(x_0^2) -x_0 r(x_0)} 
- \frac{1}{12\pi}x_0^{3/2}\log x_0 + \ \frac{1}{18\pi} x_0^{3/2}.
\end{multline}
The computation of
\begin{eqnarray*}
r(x_0) &=& \pi(x_0) - \Li(x_0) = 384 -399.59681\ldots = -15.59681\ldots \\
\pi_{id}(x_0) - \Li(x_0^2) &=& 464\,653 -480610.2863\ldots = -15957.2863\ldots
\end{eqnarray*}
and \eqref{majpi1minLi2} imply for $x \ge x_0$,
\[
\abs{\pi_{id}(x) -\Li(x^2)} \le \frac{5}{24\pi}x^{3/2}\log x
 - \frac{1}{18\pi} x^{3/2} -740.023\ldots
\le \frac{5}{24\pi}x^{3/2}\log x.
\]
which proves \eqref{equapprox} for $x \ge x_0 = 2657$. It
remains to check \eqref{equapprox} for \dsm{41 \le x \le 2657};
by setting 
\[
f_1(x) = \Li(x^2) - \frac{5}{24\pi} x^{3/2}\log x,
\quad
f_2(x) = \Li(x^2) + \frac{5}{24\pi} x^{3/2}\log x,
\]
it is equivalent to check
\begin{equation}\label{star}
f_1(x) \le \pi_{id}(x) \le f_2(x) 
\end{equation}
for $41 \le x \le 2657$.
One remarks that $f_1$ and $f_2$ are increasing for $x \ge 2$.
Therefore, to prove \eqref{star}, it suffices to check that
for every prime $p$ satisfying $41 \le p \le 2657$ we have
$f_1(p') \le \pi_{id}(x) \le f_2(p)$ where $p'$ is the prime
following $p$.
\end{proof}
\medskip

Note that, in the range $[2..2657]$, $\pi_{id}(x) - \Li(x^2)$
has several changes of sign, the smallest one being for
$x = 110.35\dots$
\medskip


\newcommand{\PRIME}{\operatorname{prime}}
\newcommand{\PI}{\operatorname{pi}}
\newcommand{\PIFTAB}{\operatorname{piftab}}

\begin{lem}\label{speedup}
Let $z$ and $u$ be two real numbers satisfying
$z \ge 1$ and $\sqrt{z} \le u \le z$. 
Suppose that we have precomputed the tables $\PRIME$,
$\PIFTAB$ and $\PI$.
The first two tables are indexed by the integers
$k$, $0 \le k \le \pi(u)$, and the third one
by the integers $t$, $0 \le t \le u$.
\begin{itemize}
\item
$\PRIME[k]$ contains $p_k$ ($p_0 = 1$).
\item
$\PIFTAB[k]$ contains $\pi_f(p_k)$.
\item
$\PI[t]$ contains $\pi(t)$.
\end{itemize}
\medskip

\noindent
Then the sum
\begin{equation}\label{sumzn}
\sum_{\sqrt z < q \le u,\ q \text{ prime}}  f(q) \pi_f\left(\frac{z}{q}\right)
\end{equation}
may be computed in $\bigo{\sqrt z/\log z}$ time.
\end{lem}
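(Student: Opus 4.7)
The approach is to reverse the order of summation, trading the outer index $q > \sqrt{z}$ for an outer index $p \le \sqrt{z}$ and thereby reducing the outer length from $\pi(u)-\pi(\sqrt{z})$ (possibly as large as $\bigo{u/\log u}$) down to $\pi(\sqrt{z}) = \bigo{\sqrt{z}/\log z}$. Expanding $\pi_f(z/q) = \sum_{p\le z/q,\ p\text{ prime}} f(p)$ and exchanging the two summations (the condition $p \le z/q$ rewrites as $q \le z/p$) yields
\[
\sum_{\substack{\sqrt{z} < q \le u\\ q \text{ prime}}} f(q)\,\pi_f\!\left(\frac{z}{q}\right)
= \sum_{\substack{p \le \sqrt{z}\\ p \text{ prime}}} f(p)\bigl(\pi_f(\min(u,z/p)) - \pi_f(\sqrt{z})\bigr),
\]
since for $p > \sqrt{z}$ the resulting inner prime range $\sqrt{z} < q \le \min(u,z/p)$ is empty.

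It now suffices to evaluate each of the at most $\pi(\sqrt{z})$ summands in $O(1)$ time. I will iterate $k = 1,2,\ldots$, setting $p = \PRIME[k]$ and stopping once $p > \sqrt{z}$. For each such $p$, I compute $t = \min(u,\lfloor z/p\rfloor) \le u$, read $\PI[t]$ to obtain $\pi(t)$, and then read $\PIFTAB[\PI[t]]$ to obtain $\pi_f(t)$ (using that $\pi_f$ is constant on the interval between consecutive primes, so $\pi_f(t) = \pi_f(\ppn{t})$). The constant $\pi_f(\sqrt{z})$ is obtained once by the same two lookups, and $f(\PRIME[k]) = \PIFTAB[k] - \PIFTAB[k-1]$, so each iteration runs in $O(1)$.

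Putting these pieces together, the total cost is $\bigo{\pi(\sqrt{z})} = \bigo{\sqrt{z}/\log z}$ by the prime number theorem. The only routine verification required is that all table accesses lie within the declared index ranges: $k \le \pi(\sqrt{z}) \le \pi(u)$ for $\PRIME$ and $\PIFTAB$, and $t \le u$ for $\PI$, both of which are immediate from $p \le \sqrt{z} \le u$. I do not expect any real obstacle here; the whole argument reduces to the summation swap together with constant-time table lookups and the standard PNT bound on $\pi(\sqrt{z})$.
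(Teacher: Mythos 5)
Your proof is correct, but it takes a genuinely different route from the paper. You swap the order of summation: expanding $\pi_f(z/q) = \sum_{p \le z/q} f(p)$ and interchanging, the outer variable becomes a prime $p < \sqrt z$ (allowing $p = \sqrt z$ is harmless, since that term contributes zero) and the inner sum collapses to $\pi_f(\min(u, z/p)) - \pi_f(\sqrt z)$, giving $\pi(\sqrt z) = \bigo{\sqrt z/\log z}$ iterations of $\bigo{1}$ table lookups each. The paper instead keeps $q$ as the iteration variable, observing only that $z/q \in [1, \sqrt z)$ for $q > \sqrt z$, so that $\pi_f(z/q)$ takes at most $\bigo{\sqrt z/\log z}$ distinct values; Algorithm~\ref{algospeed} then groups consecutive primes $q$ sharing the same value of $\pi_f(z/q)$, identifying each group boundary in $\bigo{1}$ time via $\PI$ and obtaining $\sum f(q)$ over the group as a difference of $\PIFTAB$ entries. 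Both arguments give the same complexity with comparable implementation effort; your rewriting is arguably cleaner to state as a one-line identity and immediately exposes why $\pi(\sqrt z)$ is the right count, while the paper's grouping leaves the sum untouched, which makes it a more direct description of the pseudocode and slightly easier to audit against it.
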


\begin{proof}
For $q > \sqrt z$, $z/q$ belongs to $\intfg{1}{\sqrt z}.$ The number
of primes in this interval is $\bigo{\sqrt z/\log z}$, thus the number
of values of $\pi_f(z/q)$ is $\bigo{\sqrt z/\log z}$.  We group the
$q$'s for which $\pi_f(z/q)$ takes the same value. Algorithm
\ref{algospeed} carries out this computation.

\begin{algorithm}
\caption{: Computation of the sum \eqref{sumzn} in
  \dsm{\bigo{\sqrt{z}/\log z}} time}
\label{algospeed}
\begin{algorithmic}
\STATE{$S = 0; \ imin = 1+\PI[\intpart{\sqrt{z}}]$}\\
\WHILE {$imin\le \PI[u]$}
\STATE{$q = \PRIME[imin]$}
\STATE{$s = \PI[z/q]$}
\STATE{$imax = \min(\PI(z/\PRIME[s]),\PI[u])$}
\STATE{$S = S + (\PIFTAB[imax]-\PIFTAB[imin-1])*\PIFTAB[s]$}
\STATE{$imin = imax+1$}
\ENDWHILE
\RETURN{$S$}
\end{algorithmic}
\end{algorithm}

Let us give some words to convince of the correctness of algorithm 2~:
let us note $s = \pi(z/q)$.  Then $p_s$ is the largest prime $\le z/q$.
For $q'$ prime, $q' \ge q$, we have
$\pi_f(z/q') = \pi_f(p_s) = \PIFTAB[s]$ if and only if
$z/q' \ge p_s$ i.e. $q' \le z/p_s$, in other terms, $\pi(q') \le
\pi(z/p_s)$.
Thus the largest prime $q'$ in the range $[q..u]$ such that
$\pi_f(z/q') = \pi_{f}(p_s)$ is $p_i$ where 
$i = \min(\pi(z/p_s),\pi(u))$. 
\end{proof}


\section{First results}\label{parfires}

\begin{prop}\label{propSj}
Let $j$ be a positive integer and $\sigma_j$ and $N_j$ be defined by
\eqref{Sj}. We have
\[
h(\sigma_j)=N_j.
\]
\end{prop}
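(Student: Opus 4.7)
The plan is to prove both directions: $h(\sigma_j)\ge N_j$ and $h(\sigma_j)\le N_j$. The first is immediate, since $N_j=p_1p_2\cdots p_j$ is squarefree with $\ell(N_j)=\sigma_j$ and therefore competes in the maximum \eqref{h}.

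For the reverse inequality, let $M$ be any squarefree integer with $\ell(M)\le\sigma_j$ and write $M=q_1q_2\cdots q_r$ with $q_1<q_2<\cdots<q_r$ distinct primes. Since these form an increasing sequence of distinct primes, $q_i\ge p_i$ for every $i$, which gives
\[
\sigma_r \;=\; \sum_{i=1}^{r} p_i \;\le\; \sum_{i=1}^{r} q_i \;=\; \ell(M) \;\le\; \sigma_j,
\]
forcing $r\le j$. If $r=j$, then this chain of inequalities collapses to equalities; combined with $q_i\ge p_i$, this forces $q_i=p_i$ for every $i$, and hence $M=N_j$.

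The main obstacle is the case $r<j$, where I need to show $M<N_j$. If $P^+(M)\le p_j$, the prime support of $M$ is a proper subset of $\{p_1,\ldots,p_j\}$, so $M$ divides $N_j$ strictly. The delicate sub-case is $P^+(M)>p_j$, which I would handle by induction on $j$: assuming $h(\sigma_k)=N_k$ for $k<j$, I factor $M=qM'$ with $q=P^+(M)\ge p_{j+1}$, so that $\ell(M')\le\sigma_j-q<\sigma_{j-1}$. The inductive hypothesis, together with the monotonicity of $h$, yields a bound $M'\le N_k$ for the smallest $k$ satisfying $\ell(M')\le\sigma_k$; the estimate $q\le\sigma_j-\ell(M')\le p_{k+1}+\cdots+p_j$ combined with the elementary inequality $\prod p_i\ge\sum p_i$ for any finite set of distinct primes then gives $qM'\le N_j$. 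The most subtle point is that in certain boundary configurations the naive inductive bound on $M'$ is not immediately tight; in those cases an exchange argument based on Lemma~\ref{lemp+p'} — swapping the large prime $q>p_j$ for a suitable pair of smaller primes not dividing $M$ to manufacture a strictly larger feasible competitor — closes the gap by contradicting the optimality of $M$.
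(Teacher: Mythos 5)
Your first three moves are fine: feasibility of $N_j$, the observation $q_i\ge p_i$ forcing $r\le j$ with equality only at $M=N_j$, and the divisor argument when $r<j$ and $\Pp(M)\le p_j$. The proof breaks in the remaining case $\Pp(M)>p_j$, and the breakdown is not a mere boundary subtlety.

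First, the inequality $\sigma_j-\ell(M')\le p_{k+1}+\cdots+p_j$ is written backwards: with $k$ the \emph{smallest} index satisfying $\ell(M')\le\sigma_k$, you have $\ell(M')\le\sigma_k$, hence $\sigma_j-\ell(M')\ge\sigma_j-\sigma_k=p_{k+1}+\cdots+p_j$, not $\le$. The usable bound from minimality is $\ell(M')>\sigma_{k-1}$, giving only $q<p_k+p_{k+1}+\cdots+p_j$. Second, and more seriously, even that corrected bound is insufficient: combining $M'\le N_k$ with the goal $qM'\le N_j$ forces you to show $q\le p_{k+1}\cdots p_j$, but $q<p_k+p_{k+1}+\cdots+p_j$ does not imply this because of the extra summand $p_k$. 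Concretely, take $j=3$ and $M=21=3\cdot 7$: then $q=7$, $M'=3$, $\ell(M')=3$, so $k=2$ and $M'\le N_2=6$. You would need $q\le N_3/N_2=p_3=5$, but $q=7$. The inductive bound $M'\le N_k$ simply throws away too much, and the ``elementary inequality $\prod p_i\ge\sum p_i$'' points in the wrong direction for this comparison. Finally, the appeal to an ``exchange argument based on Lemma~\ref{lemp+p'}'' to rescue the bad cases is not a proof: you are trying to bound an \emph{arbitrary} competitor $M$, not assuming it is optimal, so ``manufacturing a strictly larger feasible competitor'' does not yield $M<N_j$; and Lemma~\ref{lemp+p'} merely asserts the existence of a prime $p''$ in a certain interval, without any control on whether $p''$ already divides $M$ or whether the swap is valid.

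The paper avoids all of this by a global weighting argument rather than induction on $j$. Setting $\rho=p_j/\log p_j$, the additive function $f(M)=\ell(M)-\rho\log M$ has the key property that each prime $p>p_j$ contributes $p-\rho\log p\ge 0$ and each prime $p\le p_j$ contributes $p-\rho\log p\le 0$ (because $t\mapsto t/\log t$ is increasing past $e$ and takes its value $\rho$ at $t=p_j$). Comparing any squarefree $M$ to $N_j$ after cancelling common prime factors shows $f(M)\ge f(N_j)$, and then $\ell(M)\le\sigma_j=\ell(N_j)$ immediately yields $\log M\le\log N_j$. That one Lagrange-multiplier-style inequality handles every case uniformly, with no recursion on the number of prime factors and no need for prime-gap input.
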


\begin{proof}
It is easy to see that $h(\sigma_1)=h(2)=2=N_1$ and
$h(\sigma_2)=h(5)=6=N_2$. Now, we
may suppose that $j\geq 3$, i.e. $p_j\geq 5$ and we set
$\rho=p_j/\log p_j$. The function $t\mapsto t/\log t$ is increasing
for $t\geq e$ and, since $2/\log 2 < 5/\log 5$,
we have, for $1\leq i < j$, $p_i/\log p_i < \rho$ and for
$i > j$, $p_i/\log p_i > \rho$; in other words, $i-j$ and $p_i/\log
p_i-\rho$ have the same sign.

Let $M$ be a product of $r$ distinct primes, $M=Q_1Q_2\ldots Q_r$,
with $r\geq 0$. After a possible simplification by $s$ primes 
($0\leq s \leq \min(j,r)$), we may write
\[
\frac{M}{N_j}=\frac{p_{j_1}p_{j_2}\ldots p_{j_u}}{p_{k_1}p_{k_2}\ldots
  p_{k_v}}
\]
with $u=r-s$, $v=j-s$ and
\[
p_{k_1} < p_{k_2} < \ldots < p_{k_v} \leq p_j <p_{j_1} < p_{j_2} <
\ldots <  p_{j_u}.
\]
Let $f(M)=\ell(M)-\rho \log M$. From the definition of $\ell$, the
function $f$ is additive and we have
\begin{equation}\label{p11}
f(M)-f(N_j)=\sum_{i=1}^u (p_{j_i}-\rho \log p_{j_i})-\sum_{i=1}^v
(p_{k_i}-\rho \log p_{k_i}) \geq 0
\end{equation}
since each term of the first sum is non-negative while, in the second
sum, each term is non-positive.

From \eqref{h}, since $\ell(N_j)=\sigma_j$, in order to prove that
$h(\sigma_j)=N_j$, we must show that, for all squarefree number $M$
satisfying $\ell(M)\leq \sigma_j=\ell(N_j)$, we have $M\leq N_j$. But,
for such an $M$, \eqref{p11} yields
\[
f(M)=\ell(M)-\rho \log M  \geq f(N_j)=\ell(N_j)-\rho \log N_j=\sigma_j
-\rho \log N_j
\]
whence
\[
\frac{M}{N_j}\leq \exp\left(\frac{\ell(M)-\sigma_j}{\rho}\right)\leq
1,
\]
which completes the proof of Proposition \ref{propSj}.
\end{proof}

\begin{prop}\label{propSjSr}
Let $r$ and $j$ be two positive integers and $\sigma_j$, $N_j$ and $h_j$ be defined by
\eqref{Sj} and \eqref{hj}. We have
\begin{equation}\label{hjSj+r}
h_j(\sigma_{j+r}-\sigma_r)=N_{j+r}/N_r=p_{r+1}p_{r+2}\ldots p_{r+j}.
\end{equation}
Moreover, if $n\geq \sigma_{j+r}-\sigma_r$ we have
\begin{equation}\label{lhjn>=}
\ell(h_j(n)) \geq \sigma_{j+r}-\sigma_r.
\end{equation}
\end{prop}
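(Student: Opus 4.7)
The plan is to extend the argument of the proof of Proposition \ref{propSj}, now replacing the initial segment $N_j$ by the window $M_0:=p_{r+1}p_{r+2}\cdots p_{r+j}$. Since $\omega(M_0)=j$ and $\ell(M_0)=\sigma_{j+r}-\sigma_r$, the inequality $h_j(\sigma_{j+r}-\sigma_r)\geq M_0$ is immediate, and the content of \eqref{hjSj+r} is the reverse inequality $M\leq M_0$ for every squarefree $M$ with $\omega(M)=j$ and $\ell(M)\leq \sigma_{j+r}-\sigma_r$. The case $j=1$ is trivial from $h_1(n)=\ppn n$, so I assume $j\geq 2$.

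The weight $\rho=p_j/\log p_j$ used in Proposition \ref{propSj} does not suit a window of primes, so I would additionally introduce a shift $\mu$ and work with the additive function $f(M)=\ell(M)-\rho\log M$ together with
\[
\phi(p):=p-\rho\log p-\mu,\quad
\rho:=\frac{p_{r+j}-p_{r+1}}{\log p_{r+j}-\log p_{r+1}},\quad
\mu:=p_{r+1}-\rho\log p_{r+1}.
\]
By the mean value theorem applied to $\log$, $\rho\in(p_{r+1},p_{r+j})$, so $\phi$ is strictly convex with its minimum lying in that open interval, and by construction its two roots are exactly $p_{r+1}$ and $p_{r+j}$. Consequently $\phi(p)\leq 0$ on $[p_{r+1},p_{r+j}]$ and $\phi(p)\geq 0$ outside, so $\phi$ is $\leq 0$ on each prime of $M_0$ and $\geq 0$ on every prime in $\{2,3,\ldots\}\setminus\{p_{r+1},\ldots,p_{r+j}\}$, in particular on $p_r$ and on $p_{r+j+1}$ and beyond.

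For any admissible $M$, cancelling common prime factors with $M_0$ yields
\[
\frac{M}{M_0}=\frac{p_{j_1}\cdots p_{j_u}}{p_{k_1}\cdots p_{k_u}},
\]
with two disjoint sets of primes of the \emph{same} cardinality $u$; this equality is forced by $\omega(M)=\omega(M_0)=j$ and is precisely what makes the constant $\omega(M)\mu$ cancel, so that
\[
f(M)-f(M_0)=\sum_{i=1}^u\phi(p_{j_i})-\sum_{i=1}^u\phi(p_{k_i})\geq 0,
\]
each term of the first sum being non-negative (the $p_{j_i}$ lie outside the window) and each of the second non-positive (the $p_{k_i}$ lie inside). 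Rearranging $f(M)\geq f(M_0)$ gives $\log(M/M_0)\leq (\ell(M)-\ell(M_0))/\rho\leq 0$, i.e.\ $M\leq M_0$, proving \eqref{hjSj+r}.

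For \eqref{lhjn>=}, the monotonicity of $h_j$ in $n$ gives $h_j(n)\geq h_j(\sigma_{j+r}-\sigma_r)=M_0$. If $h_j(n)=M_0$ then $\ell(h_j(n))=\ell(M_0)=\sigma_{j+r}-\sigma_r$; if $h_j(n)>M_0$ then $h_j(n)$ cannot be a competitor in the maximum defining $h_j(\sigma_{j+r}-\sigma_r)$, which forces $\ell(h_j(n))>\sigma_{j+r}-\sigma_r$. The principal novelty over Proposition \ref{propSj} is the shift $\mu$ and the way $\omega(M)=j$ is used to cancel the term $j\mu$; this hypothesis is essential, as an unshifted version would be false, e.g.\ $r=2$, $j=2$, $M=2\cdot 3\cdot 7=42>35=M_0$ with $\ell(M)=\ell(M_0)=12$ (here $\omega(M)=3\neq j$). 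Getting the convexity-based signs right is really the only technical point.
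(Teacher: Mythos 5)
Your proof is correct and follows essentially the same approach as the paper: compare a candidate $M$ to $M_0=N_{j+r}/N_r$ through the additive function $\ell(M)-\rho\log M + \text{const}\cdot\omega(M)$, whose contribution $p-\rho\log p -\mu$ is nonpositive on the window of primes dividing $M_0$ and nonnegative elsewhere, with the constraint $\omega(M)=j$ cancelling the constant. The paper picks $\rho=\frac{p_{j+r}-p_r}{\log(p_{j+r}/p_r)}$ and $\rho'=\rho\log p_r - p_r$ so the weight vanishes at $p_r$ and $p_{j+r}$ instead of at $p_{r+1}$ and $p_{r+j}$ (a cosmetic difference), and it derives \eqref{lhjn>=} from the linear estimate $\ell(M)\geq\ell(M_0)+\rho\log(M/M_0)$ applied with $M=h_j(n)$, whereas your shorter route via the maximality of $M_0$ in \eqref{hj} works equally well.
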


\begin{proof}
The proof is similar to the one of Proposition \ref{propSj}. Let us set
\begin{equation}\label{rho}
\rho=\frac{p_{j+r}-p_r}{\log (p_{j+r}/p_r)}\qquad \text{ and } \qquad
\rho'=\rho\log p_r-p_r.
\end{equation}
Since, for $t\neq 1$, $(t-1)/t < \log t < t-1$ holds, we have $p_{j+r}
>\rho > p_r \geq 2.$
For a squarefree number $M$, we consider the additive function
\[
f(M)=\ell(M)-\rho \log M + \rho' \om(M)=\sum_{p\mid M} f(P)=
\sum_{p\mid M} (p-\rho \log
p+\rho').
\]
We will prove that $f$ attains its minimum in $N=N_{j+r}/N_r$. From
\eqref{rho}, it follows that $f(p_{j+r})=f(p_r)=0$ and the study of
the function $t\mapsto t-\rho\log t+\rho'$ shows that
\[
f(p)
\begin{cases}
> 0 & \text{ for } p < p_r \text{ or } p > p_{j+r}\\
< 0 & \text{ for } p_r < p < p_{j+r}\\
= 0 & \text{ for } p=p_r \text{ or } p=p_{j+r}.
\end{cases}
\]
Therefore, we have
\begin{equation}\label{prop21}
f(M)-f(N)=
\sum_{\substack{p\mid M\\p \;<\; p_r \text{ or } p\; >\;p_{j+r}}} f(p)
-\sum_{\substack{p\nmid M\\ p_r \;< \;p \;< \;p_{j+r}}} f(p) \geq 0.
\end{equation}
From \eqref{hj}, we have to show that, for any squarefree integer $M$
satisfying $\ell(M)\leq \sigma_{j+r}-\sigma_r=\ell(N)$ and $\om(M)=j=\om(N)$, we
have $M\leq N$. For such an $M$, \eqref{prop21} gives
\[
\ell(M)-\rho\log M+\rho'\om(M) \geq \ell(N)-\rho\log
N+\rho'\om(N)
\]
yielding
\[
\frac{M}{N}\leq \exp\left(\frac{\ell(M)-\ell(N)}{\rho}\right)\leq 1,
\]
which, together with $\ell(N)=\sigma_{j+r}-\sigma_r$, proves \eqref{hjSj+r}.

To prove \eqref{lhjn>=}, first, from \eqref{hj}, we observe that
$h_j(n) \geq N=N_{j+r}/N_r$. Setting $M=h_j(n)$ in \eqref{prop21} and
noting that $\om(M)=\om(N)=j$, we see that
\[
\ell(M) \geq \ell(N) +\rho \log \frac MN \geq
\ell(N)=\sigma_{j+r}-\sigma_r
\]
which proves \eqref{lhjn>=}.
\end{proof}

\begin{prop}\label{proppp'h}
Let $n\geq 2$ be an integer and $p < p'$ two prime numbers 
which do not divide $h(n)$. Then the largest prime divisor $P^+(h(n))$ of
$h(n)$ satisfies
\[
P^+(h(n)) < p+p'.
\]
\end{prop}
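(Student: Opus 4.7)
The plan is to argue by contradiction: assume $P := P^+(h(n)) \ge p + p'$, and produce a squarefree integer $M$ with $\ell(M)\le n$ and $M > h(n)$, contradicting the extremal characterization \eqref{h} of $h(n)$.

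The natural candidate is the prime-swap $M := (h(n)/P)\cdot p\cdot p'$, obtained by deleting the largest prime factor of $h(n)$ and inserting the two absent primes $p$ and $p'$. Since $p, p'\nmid h(n)$, the integer $M$ is squarefree; the hypothesis $P\ge p+p'$ together with \eqref{lhn<=n} gives $\ell(M) = \ell(h(n)) - P + p + p' \le \ell(h(n))\le n$, so $M$ is a valid competitor in \eqref{h}. Maximality then forces $M \le h(n)$, which rewrites as $pp' \le P$; because $P$ is prime while $pp'$ is composite, this strengthens to $pp' \le P-1$.

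To upgrade this to an actual contradiction I would invoke Lemma \ref{lemp+p'}, which produces a prime $p''$ with $p + p' \le p'' \le pp'-p+1$; hence $p + p'' \le pp'+1 \le P$. Provided $p''\nmid h(n)$, the modified swap $M':=(h(n)/P)\cdot p\cdot p''$ is again a valid squarefree competitor, and the same maximality argument forces $pp'' \le P$, a strictly stronger bound than $pp'\le P$ since $p''\ge p+p'>p'$. Iterating, by applying Lemma \ref{lemp+p'} successively to $(p,p''), (p,p'''), \ldots$, one obtains a strictly increasing sequence of primes $p^{(k)}$ with $p^{(k)} \ge p+p^{(k-1)}$ and $pp^{(k)} \le P$ at every step; the lower bound on the growth yields $pp^{(k)} \ge pp' + kp^2$, which for $k$ large enough contradicts $pp^{(k)}\le P$, giving the desired contradiction.

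The main obstacle is the scenario in which, at some step of the iteration, the prime $p^{(k)}$ delivered by Lemma \ref{lemp+p'} happens to divide $h(n)$: then the corresponding swap $M^{(k)}$ fails to be squarefree and the clean induction breaks. To close this gap one must design an alternative exchange that removes \emph{both} $P$ and $p^{(k)}$ from $h(n)$ and reinserts a slightly larger collection of small primes, exploiting the extra $\ell$-slack freed by deleting two prime factors instead of one. Verifying that such a replacement both respects squarefreeness and strictly increases the product---in particular locating a suitable replacement prime disjoint from the factorization of $h(n)$---is the most delicate part of the argument.
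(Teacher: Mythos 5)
Your swap argument is in the right spirit, and the first exchange (delete $P$, insert $p,p'$, conclude $pp'\le P$) matches the paper's opening move. But the gap you identify at the end --- that the replacement prime delivered by Lemma~\ref{lemp+p'} may already divide $h(n)$, so the iterated swap need not be squarefree --- is a genuine hole, and it is exactly the place where the argument must be designed more carefully. There is no obvious way to steer the sequence $p'', p''', \ldots$ around the (up to $k(n)$ many) prime factors of $h(n)$, and the ``remove two primes, insert several'' repair you gesture at would have to be worked out in detail; as written the proposal does not prove the proposition.

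The paper avoids the problem entirely by a different choice of which prime to remove: instead of taking $P = P^+(h(n))$, it takes $q$ to be the \emph{smallest} prime factor of $h(n)$ that is $\ge p+p'$ (assuming such a factor exists, for contradiction). Then one distinguishes $q<pp'$ (immediate contradiction from the swap $\frac{pp'}{q}h(n)$) and $q>pp'$. In the latter case, Lemma~\ref{lemp+p'} yields a prime $p''$ with $p+p'\le p''\le pp'-p+1\le q-p<q$; since $p''\ge p+p'$ and $p''<q$, the \emph{minimality} of $q$ forces $p''\nmid h(n)$ with no further work. One then uses Lemma~\ref{lem11/8} to check $q<pp''$, and the swap $\frac{pp''}{q}h(n)$ closes the case in a single extra step --- no iteration, and no risk of hitting a prime already dividing $h(n)$. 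So the repair to your argument is not to iterate harder, but to replace the ``largest prime factor'' with the ``smallest prime factor $\ge p+p'$'': that one change makes the squarefreeness of the replacement automatic.
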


\begin{proof}
Let us assume that the set of prime factors of $h(n)$ not smaller 
than $p+p'$ is not empty and let $q \geq p+p'$ be its smallest element.
\begin{itemize}
\item
If $q < pp'$, by setting $M=\frac{pp'}{q} h(n)$, we have by \eqref{lhn<=n}
\[
\ell(M)=p+p'-q+\ell(h(n))\leq \ell(h(n))\leq n
\]
and thus, from \eqref{h},
\begin{equation}\label{proppp'h1}
h(n) \geq M=\frac{pp'}{q} h(n),
\end{equation}
in contradiction with $q < pp'$.

\item
If $q > pp'$, i.e. $q\geq pp'+1$, by Lemma \ref{lemp+p'}, 
the interval $[p+p',q-p]$ contains a prime; thus
the prime $p''=\ppn(q-p)$ satisfies $p+p'\leq p'' \leq q-p < q$ and,
from the definition of $q$, $p''$ does not divide $h(n)$. 
By Lemma \ref{lem11/8}, $p'' =\ppn (q-p)\geq \frac{7}{10}(q-p)$ holds, whence
\[
q \leq \frac{10}{7} p''+p=
\frac{pp''}{7}\left(\frac{10}{p}+\frac{7}{p''}\right).
\]
We have  $p\geq 2$,  $p' \geq 3$ and $p'' \geq p+p' \geq 5$, so that 
$\frac{10}{p}+\frac{7}{p''} \leq \frac{10}{2}+\frac{7}{5} < 7$, 
yielding $q < pp''$. By considering 
$M=\frac{pp''}{q} h(n)$, as in \eqref{proppp'h1}, we get
\[
h(n) \geq M=\frac{pp''}{q} h(n) > h(n),
\]
a contradiction.
\end{itemize}
\end{proof}

\begin{coro}\label{coropp'h}
If $k=k(n)$ is defined by \eqref{k}, the largest prime factor 
of $h(n)$ satisfies
\[
P^+(h(n)) < p_{k+1}+p_{k+2}.
\]
\end{coro}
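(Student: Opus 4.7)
The plan is to combine Proposition \ref{proppp'h} with a counting argument based on the defining inequality \eqref{k} for $k(n)$.

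First, I would establish that $\omega(h(n)) \le k = k(n)$. Indeed, if $h(n)$ has $j$ distinct prime factors $q_1 < q_2 < \cdots < q_j$, then $\ell(h(n)) = q_1+\cdots+q_j \ge p_1+\cdots+p_j = \sigma_j$, and combining with \eqref{lhn<=n} and \eqref{k} gives $\sigma_j \le n < \sigma_{k+1}$, hence $j \le k$.

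Next, I would apply a pigeonhole-style observation to the first $k+2$ primes $p_1,\ldots,p_{k+2}$: since at most $k$ of them can divide $h(n)$, at least two do not. Let $p < p'$ denote the two smallest primes in $\{p_1,\ldots,p_{k+2}\}$ which do not divide $h(n)$. Because there are at least two such primes inside this set of size $k+2$, we must have $p \le p_{k+1}$ and $p' \le p_{k+2}$ (in the extreme case the $k$ primes dividing $h(n)$ exhaust $\{p_1,\ldots,p_k\}$).

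Finally, Proposition \ref{proppp'h} applied to this pair $p < p'$ yields $P^+(h(n)) < p + p' \le p_{k+1} + p_{k+2}$, which is exactly the desired conclusion.

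There is no real obstacle here: the corollary is essentially a bookkeeping consequence of Proposition \ref{proppp'h}, the only subtle point being the verification that $\omega(h(n)) \le k$ so that two non-dividing primes can be located among the first $k+2$ primes. One should perhaps also note that the statement presupposes $n \ge 2$ (so that $h(n) \ne 1$ and $k \ge 1$); for $n < 2$ the quantity $P^+(h(n))$ is not defined in the usual sense, having been set to $-\infty$ in the notation section.
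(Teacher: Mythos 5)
Your proof is correct and follows essentially the same route as the paper: the paper also observes that $\omega(h(n))\le k$, invokes the pigeonhole principle on $\{p_1,\dots,p_{k+2}\}$ to find two non-dividing primes, and then applies Proposition \ref{proppp'h}. The only difference is that you re-derive the bound $\omega(h(n))\le k$ explicitly, whereas the paper has already recorded this fact in \S\ref{parhj} when introducing $k(n)$.
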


\begin{proof}
The number of prime factors of $h(n)$ does not exceed $k$, so that, 
among $p_1,p_2,\ldots, p_{k+2}$ there are certainly two 
prime numbers $p$ and $p'$ not dividing $h(n)$. By applying Proposition \ref{proppp'h}, 
we get $P^+(h(n)) < p+p' \leq p_{k+1}+p_{k+2}$.
\end{proof}

\begin{prop}\label{proppp'hj}
Let $n\geq 5$ be an integer, $k\geq 2$ be defined by \eqref{k} and $j$
an integer satisfying $2\leq j\leq k$. Let us supose that there
exists  two prime numbers, $p, p'$ not dividing $h_{j-1}(n)$, and 
satisfying $3\leq p < p'$ and
$P^+(h_{j-1}(n))\geq p+p'$ where $P^+(h_{j-1}(n))$ is the largest prime divisor of
$h_{j-1}(n)$. Then the inequality
\[
h_{j}(n) > \frac 65 h_{j-1}(n)
\]
holds.
\end{prop}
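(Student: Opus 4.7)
The plan is to adapt the exchange argument from Proposition \ref{proppp'h} and construct an explicit squarefree integer $M$ with $\omega(M) = j$, $\ell(M) \le n$, and $M > \frac{6}{5}\, h_{j-1}(n)$. Since $M$ is then a valid candidate for $h_j(n)$, this will give $h_j(n) \ge M > \frac{6}{5}\, h_{j-1}(n)$. Set $L := \ell(h_{j-1}(n)) \le n$ and introduce $q$ as the \emph{smallest} prime factor of $h_{j-1}(n)$ satisfying $q \ge p + p'$; such a $q$ exists because $P^+(h_{j-1}(n)) \ge p + p'$. The crucial property is that by the minimality of $q$, every prime in the interval $[p+p',\,q)$ fails to divide $h_{j-1}(n)$, which will guarantee squarefreeness of $M$.

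The main split is on the size of $q$ relative to $\frac{5}{6}pp'$. Since $p, p' \ge 3$ are odd, $pp'$ is odd and $\frac{5}{6}pp' \notin \Z$, so $q \ne \frac{5}{6}pp'$. In the case $q < \frac{5}{6}pp'$, take $M = \frac{pp'}{q}\, h_{j-1}(n)$: one checks $\omega(M) = j$, $\mu(M) \ne 0$, $\ell(M) = L - q + p + p' \le L \le n$ (using $q \ge p + p'$), and the ratio $M/h_{j-1}(n) = pp'/q > \frac{6}{5}$ by the case hypothesis. In the opposite case $q > \frac{5}{6}pp'$ with $pp' \ne 15$, apply Lemma \ref{lemp+p'5/6} to obtain a prime $p''$ with $p+p' \le p'' \le \frac{5}{6}pp' - p$; then $p'' < q$, so by the minimality of $q$, $p''$ does not divide $h_{j-1}(n)$. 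Setting $M = \frac{p\, p''}{q}\, h_{j-1}(n)$, the same three conditions are verified (the length constraint uses $p + p'' \le \frac{5}{6}pp' < q$), and the ratio $pp''/q \ge p(p+p')/q$ exceeds $\frac{6}{5}$ as long as $q < \frac{5p(p+p')}{6}$.

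For the remaining subcase $q \ge \frac{5p(p+p')}{6}$, I would take instead $p''$ to be the smallest prime strictly greater than $\frac{6q}{5p}$. Then $p'' > \frac{6q}{5p} \ge p+p'$, and Lemma \ref{lem11/8} gives $p'' \le \frac{11}{8}\!\left(\frac{6q}{5p} + 1\right) = \frac{33q}{20p} + \frac{11}{8}$; a short calculation using $p \ge 3$ and the lower bound on $q$ verifies $p'' \le q - p$, whence $p'' < q$ and again $p'' \nmid h_{j-1}(n)$ by minimality. The construction $M = \frac{p\, p''}{q}\, h_{j-1}(n)$ then gives $pp'' > \frac{6q}{5}$ by the choice of $p''$, hence $M/h_{j-1}(n) > \frac{6}{5}$. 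The excluded case $pp' = 15$ (so $p=3$, $p'=5$) is handled by direct inspection: $q = 11$ satisfies $q < 12.5 = \frac{5}{6}pp'$ and is covered by the first construction; $q \ge 23$ falls into the large-$q$ subcase; for $q \in \{17, 19\}$ one uses $p'' = 11 \in [p+p', q)$ directly; and for $q = 13$, one splits on whether $7 \mid h_{j-1}(n)$---if not, take $M = \frac{3 \cdot 7}{13}\, h_{j-1}(n)$ with ratio $\frac{21}{13} > \frac{6}{5}$; otherwise use the double exchange $M = \frac{3 \cdot 5 \cdot 11}{7 \cdot 13}\, h_{j-1}(n)$, which has $\omega(M) = j$, ratio $\frac{165}{91} > \frac{6}{5}$, and $\ell(M) = L - 1 \le n$.

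The main obstacle is juggling three constraints on the replacement prime $p''$ simultaneously: (i) $p''$ should not divide $h_{j-1}(n)$, handled by requiring $p'' \in [p+p', q)$ and invoking the minimality of $q$; (ii) $p + p'' \le q$ so that $\ell(M) \le L \le n$; and (iii) $pp''/q > \frac{6}{5}$ for the ratio. Different ranges of $q$ force different trade-offs between (ii) and (iii), which is why the argument naturally splits into three subregimes of $q$, and the exceptional $pp' = 15$ requires separate inspection because Lemma \ref{lemp+p'5/6} is unavailable there.
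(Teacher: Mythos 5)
Your proof is correct, and it is a genuinely different realization of the same exchange idea. For the main case ($pp'\ne 15$, $q\ge\frac56pp'$), the paper commits to a \emph{single} replacement prime $p''=\ppn(q-p)$ --- the largest prime not exceeding $q-p$ --- then uses Lemma~\ref{lemp+p'5/6} to guarantee $p''\ge p+p'$ and Lemma~\ref{lem11/8} (the lower bound $\ppn m\ge\frac{7}{10}m$) to prove $q<\frac56pp''$ uniformly, so one choice of $p''$ handles all large $q$ at once. You instead split further on the size of $q$: for $\frac56pp'<q<\frac{5p(p+p')}{6}$ the mere existence prime from Lemma~\ref{lemp+p'5/6} suffices, and for $q\ge\frac{5p(p+p')}{6}$ you pick $p''$ just above $\frac{6q}{5p}$ and invoke the \emph{upper} bound $m^\star\le\frac{11}{8}m$ of Lemma~\ref{lem11/8} to keep $p''\le q-p$; I checked the arithmetic in that subcase and it goes through with room to spare (the required threshold on $q$ is about $p+\frac{11}{8}$ over $1-\frac{33}{20p}$, far below $\frac{5p(p+p')}{6}$). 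The treatments of the exceptional case $pp'=15$ also diverge: the paper reads off a precomputed table for $\Pp(h_{j-1}(n))\le 13$ (noting this forces $n\le 41$) and otherwise re-enters case~1 with $p=3$, $p'=11$, splitting on whether $11\mid h_{j-1}(n)$; you instead enumerate $q\in\{11,13,17,19\}$ with bespoke exchanges --- including the nice double swap $M=\frac{3\cdot5\cdot11}{7\cdot13}h_{j-1}(n)$ for $q=13$ with $7\mid h_{j-1}(n)$, where $11\nmid h_{j-1}(n)$ by minimality of $q$ --- and fold $q\ge 23$ into your large-$q$ subcase. Your route is more case-heavy but self-contained (no appeal to numerical tables and no recursive re-entry into case~1), while the paper's is leaner in the generic regime thanks to the maximal choice of $p''$.
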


\begin{proof}
Let us consider two cases~:
\paragraph{Case 1~: $pp' > 15$.}
Let us denote by $q \leq P^+(h_{j-1}(n))$  the smallest prime dividing
$h_{j-1}(n)$ and satisfying $p+p' \leq q$.

\begin{itemize}
\item
If $q < \dfrac 56 pp'$, we set $M=\dfrac{pp'}{q} h_{j-1}(n)$; we have
$\om(M)=j$ and
$\ell(M)=p+p'-q+\ell(h_{j-1}(n))\leq \ell(h_{j-1}(n))$ so that, from
\eqref{lhjn<=n}, $\ell(M) \leq n$ holds and \eqref{hj} yields
\begin{equation}\label{proppp'hj1}
h_{j}(n) \geq M > \frac 65 h_{j-1}(n)
\end{equation}
as required.
\item
If $q \geq \dfrac 56 pp'$, we set $p''=\ppn (q-p)$; from 
Lemma \ref{lemp+p'5/6},
$p+p' \leq p'' \leq q-p < q$ holds,  and, from the definition of $q$,
$p''$ does not divide $h_{j-1}(n)$.

By Lemma \ref{lem11/8}, we get $p'' =\ppn (q-p)\geq
\frac{7}{10}(q-p)$, which implies
\[
q \leq \frac{10}{7} p''+p=
\frac{pp''}{7}\left(\frac{10}{p}+\frac{7}{p''}\right).
\]
But $p\geq 3$,  $p' \geq 7$, $p'' \geq p+p' \geq 10$, 
thus $p'' \geq 11$, and
$\frac{10}{p}+\frac{7}{p''} \leq \frac{10}{3}+\frac{7}{11} < \frac{35}{6}$, 
yielding $q < \frac 56 pp''$. By setting
$M=\frac{pp''}{q} h_{j-1}(n)$, as in \eqref{proppp'hj1}, we get
$h_{j}(n) \geq M > \frac 65 h_{j-1}(n)$.
\end{itemize}

\paragraph{Case 2~: $p=3$,  $p'=5$.}
\begin{itemize}
\item
If $\Pp(h_{j-1}(n)) \le 13$, which implies $n \le \pi_{id}(13) = 41$,
examining the table of Fig. 1 shows that, for $n \le 41$, we have
$h_j(n) \ge \frac{6}{5} h_{j-1}(n)$ with equality if and only if
$h_{j-1}(n) = 5$, $35$, $385$ or $5005$.

\item
If $\Pp(h_{j-1}(n)) \ge 17$, and $11$ does not divide
$\Pp(h_{j-1}(n))$,
then we apply case 1 with $p=3$, $p'=11$, while, if $11$
divides $\Pp(h_{j-1}(n))$,
\dsm{h_j(n) \ge \frac{3 \cdot 5}{11} h_{j-1}(n) 
> \frac{6}{5} h_{j-1}(n)} holds.
\end{itemize}
\end{proof}

\section{Bounding $h_j(n)$}\label{parbouhj}

\begin{prop}\label{propbouhj}
Let $j\geq 1$ and $n\geq \sigma_j$ (where $\sigma_j$ has been introduced 
in \eqref{Sj}) be two integers; we define $r\geq 0$ by
\begin{equation}\label{encn}
\sigma_{j+r}-\sigma_r \leq n < \sigma_{j+r+1}-\sigma_{r+1}
\end{equation}
and $n'$ by
\begin{equation}\label{n'}
0\leq n'=n-(\sigma_{j+r}-\sigma_r)  < p_{j+r+1}-p_{r+1}.
\end{equation}
Then we have
\begin{equation}\label{hj<=}
h_j(n) \leq  p_{r+1}p_{r+2}\ldots p_{r+j}
\frac{p_{j+r+1}}{p_{j+r+1}-n'}= \frac{N_{j+r+1}}{N_r (p_{j+r+1}-n')}\cdot
\end{equation}
\end{prop}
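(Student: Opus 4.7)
The plan is to reduce the claim to the compact inequality
\[
M\bigl(S-\ell(M)\bigr)\le P\qquad\text{for every squarefree $M$ with $\omega(M)=j$,}
\]
where I set $S:=\sigma_{j+r+1}-\sigma_r$ and $P:=N_{j+r+1}/N_r=p_{r+1}p_{r+2}\cdots p_{j+r+1}$. The reduction is one line: from the definition of $n'$ one has $p_{j+r+1}-n'=S-n$, and since $\ell\bigl(h_j(n)\bigr)\le n$ one gets $S-n\le S-\ell(h_j(n))$, so
\[
h_j(n)\,(p_{j+r+1}-n')=h_j(n)(S-n)\le h_j(n)\bigl(S-\ell(h_j(n))\bigr)\le P.
\]
In this formulation, the integer $T:=S-\ell(M)$ plays the role of a $(j+1)$-st factor no longer required to be prime; this is the integer relaxation announced at the end of \S\,\ref{parbouhj}.

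I would then prove the key inequality by an exchange argument on a maximizer of $\phi(M):=M(S-\ell(M))$ over squarefree integers with $\omega(M)=j$. The range $\ell(M)\ge S$ is trivial ($\phi\le 0\le P$), so I restrict to $T>0$. The product $M_0:=p_{r+1}\cdots p_{j+r}$ is admissible and gives $\phi(M_0)=(N_{j+r}/N_r)\,p_{j+r+1}=P$ by Proposition~\ref{propSjSr}, so the maximum is at least $P$. Fix a maximizer $M^*$ and set $\mathcal{P}^*:=\{p_{r+1},\ldots,p_{j+r+1}\}$. Since $|M^*|=j<j+1=|\mathcal{P}^*|$, for every prime factor $q_k$ of $M^*$ there exists $p^*\in\mathcal{P}^*\setminus M^*$, and the replacement $M':=(M^*/q_k)\,p^*$ is squarefree with $\omega(M')=j$. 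A direct expansion gives
\[
\phi(M')-\phi(M^*)=\frac{M^*}{q_k}\,(p^*-q_k)\,(T-p^*),
\]
which, by the optimality of $M^*$, must be $\le 0$ for every such swap.

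The sign analysis then forces $M^*\subseteq\mathcal{P}^*$. Decompose $M^*=A\cup B$ with $A=M^*\cap\mathcal{P}^*$ and $B=M^*\setminus\mathcal{P}^*$, and set $C=\mathcal{P}^*\setminus A$, so $|C|=|B|+1$ and $T=\sum C-\sum B$. If some $q_k\in B$ satisfies $q_k\le p_r$ (Case~A), then $p^*-q_k>0$ for every $p^*\in C$, so optimality yields $T\le p^*$ for each such $p^*$, hence $T\le\min C$ and equivalently $\sum B\ge\sum C-\min C\ge|B|\,p_{r+1}$; but if $B$ lies entirely below $\mathcal{P}^*$ then $\sum B\le|B|\,p_r<|B|\,p_{r+1}$, a contradiction. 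The mirror case $q_k\ge p_{j+r+2}$ (Case~B) is symmetric: one gets $T\ge\max C$, equivalently $\sum B\le|B|\,p_{j+r+1}$, contradicted by $\sum B\ge|B|\,p_{j+r+2}$. If $B$ contains primes on both sides of $\mathcal{P}^*$, combining the two constraints yields $\min C\ge T\ge\max C$, incompatible with $|C|=|B|+1\ge 2$. In every case $B=\emptyset$, so $M^*=\mathcal{P}^*\setminus\{p^*\}$ for some $p^*$, $T=p^*$, and $\phi(M^*)=(P/p^*)\,p^*=P$. The main obstacle is organising this three-way case analysis cleanly; once the strict inequalities $p_r<p_{r+1}$ and $p_{j+r+1}<p_{j+r+2}$ are invoked, the rest reduces to routine algebra.
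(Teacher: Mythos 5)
Your proof is correct, and it takes a genuinely different (and, I think, cleaner) route than the paper's. The paper starts from the factorisation \eqref{hjAB}, separates the prime factors $A_i > p_{j+r+1}$ from the missing primes $B_i\le p_{j+r+1}$, and relaxes only the $B_i$'s to integer variables $Z_i$, leading to the optimisation problem \eqref{opt}--\eqref{URnu}; it then shows $U(\unc)=R(\unc)=0$ at the maximiser through three sub-cases. You instead absorb everything into the single quantity $\phi(M)=M\bigl(S-\ell(M)\bigr)$ over squarefree $M$ with $\omega(M)=j$, treating the slack $T=S-\ell(M)$ as the unique ``non-prime factor''; the reduction to $h_j(n)(p_{j+r+1}-n')\le P$ via $p_{j+r+1}-n'=S-n$ and $\ell(h_j(n))\le n$ is immediate and the exchange identity $\phi(M')-\phi(M^*)=\frac{M^*}{q_k}(p^*-q_k)(T-p^*)$ replaces the paper's three separate swap arguments on $\unc$. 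Your three-way case analysis on $B=M^*\setminus\mathcal P^*$ (entirely below, entirely above, split) is exhaustive and each branch yields a clean contradiction; the base cases $B=\emptyset$ and $T\le 0$ are handled, and finiteness of the domain $\{T>0\}$ guarantees a maximiser. What the paper's route buys is a formulation closer to the algorithmic picture (explicit $A_i$'s and $B_i$'s mirroring \eqref{hjAB}); what yours buys is a symmetric, coordinate-free statement and a shorter verification. Two minor notes: the equality $\phi(M_0)=P$ for $M_0=p_{r+1}\cdots p_{j+r}$ is a one-line direct computation ($T_0=p_{j+r+1}$) and does not actually need Proposition~\ref{propSjSr}; and the ``integer relaxation'' the authors allude to is at the end of \S\,\ref{parhj}, not \S\,\ref{parbouhj}, and relaxes the $B_i$'s rather than introducing a slack factor $T$ --- so it is a related but not identical relaxation.
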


\begin{proof}
From its definition \eqref{hj}, $h_j(n)$ is a product of $j$
primes. Let us denote by $A_1,A_2,\ldots,A_u$ (with $0 \leq u \leq j$) its
prime factors exceeding $p_{j+r+1}$ and by $B_1,B_2, \ldots B_{r+1+u}$
the primes  $\leq p_{j+r+1}$ and not
dividing $h_j(n)$; we have
\begin{equation}\label{hjAB}
h_j(n)=\frac{N_{j+r+1} A_1 A_2 \ldots A_u}{B_1 B_2 \ldots B_{r+1+u}}
\end{equation}
(where the product $A_1 A_2 \ldots A_u$ should be replaced by $1$ when
$u=0$) and
\begin{equation}\label{B}
2\leq B_1 < \ldots < B_{r+1+u} \leq p_{j+r+1} < p_{j+r+2} \leq A_1 <
\ldots < A_u.
\end{equation}
Further, let us introduce $\nu=\ell(h_j(n))$; by \eqref{lhjn<=n} 
and \eqref{encn}, we have 
\begin{equation}\label{nu<=n}
\nu \leq n < \sigma_{j+r+1}-\sigma_{r+1}
\end{equation}
and it follows from Proposition \ref{propSjSr}, \eqref{lhjn>=},  that 
\begin{equation}\label{nu>=}
\nu=\ell(h_j(n)) \geq \sigma_{j+r}-\sigma_r .
\end{equation}
Moreover, \eqref{hjAB} implies
\begin{equation}\label{nu}
\nu=\ell(h_j(n)= \sigma_{j+r+1}-\sigma_{r} + \sum_{i=1}^u
(A_i-B_{r+1+i})-\sum_{i=1}^r (B_i-p_i) -B_{r+1}.
\end{equation}

Now, we consider the optimization problem (where $\nu, r,u,
A_1,A_2,\ldots, A_u$ are fixed)
\begin{equation}\label{opt}
\cm=\max_{\unz\in \cd} \frac{A_1 A_2 \ldots A_u}{f(\unz)}
\end{equation}
where $\cd$ is a subset of $\N^{r+1+u}$, $\unz=(Z_1,Z_2,\ldots,
Z_{r+1+u})$,
\[
f(\unz)=Z_1Z_2\ldots Z_{r+1+u}
\]
and the set $\cd$ is defined by
\begin{equation}\label{Zipi}
Z_i \geq p_i, \quad (1\leq i \leq r+1),
\end{equation}
\begin{equation}\label{Zirp1}
Z_i  <  Z_{r+1}, \quad (1\leq i \leq r),
\end{equation}
\begin{equation}\label{ZiAi}
Z_{r+1} < Z_{r+1+i} \leq A_i , \quad (1\leq i \leq u)
\end{equation}
and
\begin{equation}\label{URnu}
U(\unz)-R(\unz)-Z_{r+1}+\sigma_{j+r+1}-\sigma_r=\nu
\end{equation}
with
\begin{equation}\label{UR}
U(\unz)=\sum_{i=1}^u (A_i-Z_{r+1+i}), \qquad R(\unz)=\sum_{i=1}^r (Z_i-p_{i}).
\end{equation}
Note that, from \eqref{B} and \eqref{nu}, $\unb \in \cd$ so that
\eqref{hjAB} implies
\begin{equation}\label{hj<=M}
\frac{h_j(n)}{N_{j+r+1}}=\frac{A_1 A_2 \ldots A_u}{f(\unb)} \leq 
\cm=\max_{\unz\in \cd} \frac{A_1 A_2 \ldots A_u}{f(\unz)}\cdot
\end{equation}

If $\unz\in \cd$, from \eqref{Zipi}, \eqref{Zirp1} and \eqref{ZiAi},
it follows that
\[
2=p_1\leq Z_i\leq A_u,\qquad 1\leq i \leq r+1+u
\]
so that $f(\unz)$ does not vanish on $\cd$ and $\cd$ is finite. 
Therefore, the maximum $\cm$ defined by \eqref{opt} is
finite; let $\unc$ be a point in $\cd$ where the maximum $\cm$ is
attained. We shall prove that
\begin{equation}\label{UCRC=0}
U(\unc)=R(\unc)=0.
\end{equation}

For that, first we claim that one of the two numbers $U(\unc), R(\unc)$
vanishes. Indeed, assume that $U(\unc)\neq 0$ and $R(\unc)\neq
0$. 
From \eqref{Zipi}, we have $R(\unc) > 0$; thus there exists
$i_0$, $1\leq i_0 \leq r$, such that
\begin{equation}\label{i0}
C_{i_0} \geq p_{i_0}+1 > p_{i_0}.
\end{equation} 
Similarly, from \eqref{ZiAi}, we have $U(\unc) > 0$, and there exists $i_1$,
$1\leq i_1 \leq u$ such that 
\begin{equation}\label{i1}
A_{i_1} > A_{i_1}-1 \geq C_{r+1+i_1}.
\end{equation} 
Let us define $\unc'\in \N^{r+1+u}$ by
\[
C'_{i_0}=C_{i_0}-1,\;C'_{r+1+i_1}=C_{r+1+i_1}+1,\; 
C'_{i}=C_{i}\; \text{ for } i\neq i_0,r+1+i_1.
\]
To prove that $\unc'\in \cd$, we have to check that $C'_{i_0} \geq
p_{i_0}$ (which follows from \eqref{i0}), that $C'_{i_0} < C_{r+1}$
(which follows from $C'_{i_0} =C_{i_0}-1 < C_{r+1}-1$), that 
$C_{r+1} < C'_{r+1+i_1}$ (since $C_{r+1} < C_{r+1+i_1}$ and 
$C_{r+1+i_1} = C'_{r+1+i_1}-1$), that $C'_{r+1+i_1} \leq A_{i_1}$ 
(which follows from \eqref{i1}) and that 
$U(\unc')-R(\unc')=U(\unc)-R(\unc)$ (which follows from
$U(\unc')=U(\unc)-1$ and $R(\unc')=R(\unc)-1$).
Further,  we have
\begin{eqnarray}\label{fC'}
\frac{f(\unc')}{f(\unc)}&=&\frac{C'_{i_0}C'_{r+1+i_1}}{C_{i_0}C_{r+1+i_1}}
=\frac{(C_{i_0}-1)(C_{r+1+i_1}+1)}{C_{i_0}C_{r+1+i_1}}\notag\\
&=&1-\frac{(C_{r+1+i_1}-C_{i_0}+1)}{C_{i_0}C_{r+1+i_1}} < 1
\end{eqnarray}
since, from the definition of $\cd$ (cf. \eqref{Zirp1} and
\eqref{ZiAi}), $C_{i_0} < C_{r+1} < C_{r+1+i_1}$ holds. But
\eqref{fC'} contradicts the fact that the maximum in \eqref{opt} is
attained in $\unc$.

Let us show now that it is impossible to have simultaneously $U(\unc)
>0$ and $R(\unc)=0$; indeed, let us assume that $U(\unc)\geq 1$ and
$R(\unc)=0$ (which implies $r=0$ or $C_i=p_i$ for
$1\leq i \leq r$). We define $i_1$ as in
\eqref{i1}. Since $\unc\in\cd$, we get from \eqref{URnu}
\[
C_{r+1}=\sigma_{j+r+1}-\sigma_r-\nu+U(\unc)=
(\sigma_{j+r+1}-\sigma_{r+1}-\nu)+p_{r+1}+U(\unc)
\]
which, by \eqref{nu<=n} and $U(\unc) \geq 1$, yield
\begin{equation}\label{Crp12}
C_{r+1} > p_{r+1}+U(\unc) \geq p_{r+1}+1.
\end{equation}
We define $\unc'\in \N^{r+1+u}$ by
\[
C'_{r+1}=C_{r+1}-1,\;C'_{r+1+i_1}=C_{r+1+i_1}+1,\; 
C'_{i}=C_{i}\; \text{ for } i\neq r+1,r+1+i_1.
\]
To prove that $\unc'\in \cd$, we have to check 
that $C'_{i} \geq p_{i}$ for $1\leq i \leq r+1$ 
(which follows from $C'_i=C_i=p_i$ if
$i\leq r$ and from \eqref{Crp12} if $i=r+1$), 
that $C'_i < C'_{r+1}$ for $1\leq i \leq r$ (which follows
from $C'_i=C_i=p_i\leq p_r$ and from
$C'_{r+1}=C_{r+1}-1\geq p_{r+1}$, via\eqref{Crp12}),
that $C'_{r+1} < C'_{r+1+i}$ for $1\leq i \leq u$
(which follows from $C'_{r+1} < C_{r+1}$ and $C'_{r+1+i} \geq   C_{r+1+i}$), 
that $C'_{r+1+i_1} \leq A_{i_1}$ 
(which follows from \eqref{i1}) 
and that $U(\unc')-C'_{r+1}=U(\unc)-C_{r+1})$ (which is easy).
As in \eqref{fC'},  we have  $f(\unc') < f(\unc)$,
contradicting the fact that the maximum in \eqref{opt} is attained in
$\unc$.

To prove \eqref{UCRC=0}, it remains to show that we cannot have
$R(\unc) > 0$ and $U(\unc)=0$. Let us suppose that 
$R(\unc)\geq 1$ and $U(\unc)=0$, 
which implies $u=0$ or, for $1\leq i \leq u$,
\begin{equation}\label{Crp13}
C_{r+1+i}=A_i\geq p_{j+r+2}\geq p_{j+r+1}+2,
\end{equation}
with the help of \eqref{B}.
From \eqref{URnu} and \eqref{nu>=}, this time we get
\begin{eqnarray}\label{Crp14}
C_{r+1} = \sigma_{j+r+1}-\sigma_r -\nu -R(\unc)  &\leq&  \sigma_{j+r+1}-\sigma_r -(\sigma_{j+r}-\sigma_r)
-R(\unc) \notag\\
&=&p_{j+r+1}-R(\unc) \leq p_{j+r+1}-1.
\end{eqnarray}
Here we choose $i_0$ as in \eqref{i0} and set
\[
C'_{i_0}=C_{i_0}-1,\;C'_{r+1}=C_{r+1}+1,\; C'_{i}=C_{i}\; \text{ for
} i\neq i_0,r+1.
\]

To prove that $\unc'\in \cd$, we have to check 
that $C'_{i_0} \geq p_{i_0}$ 
(which follows from \eqref{i0}), 
that $C'_{r+1} \geq p_{r+1}$
(which follows from $C'_{r+1} =C_{r+1}+1$ and  $C_{r+1}\geq p_{r+1}$), 
that, for $1\leq i \leq r$, $C'_i < C'_{r+1}$ 
(which follows from $C'_i \leq C_i$ and $C'_{r+1} > C_{r+1}$),
that, for $1\leq i \leq u$, $C'_{r+1}< C_{r+1+i_1} = A_{i_1}$ 
(which follows from \eqref{Crp13} and \eqref{Crp14}) 
and that $R(\unc')+C'_{r+1}=R(\unc)+C_{r+1}$ (which is easy).
As precedingly in \eqref{fC'}, we observe that $f(\unc') <f(\unc)$, contradicting the fact that the minimum is attained in $\unc$.

In conclusion, we  have proved \eqref{UCRC=0} so that $C_i=p_i$ for
$1\leq i \leq r$ and $C_{r+1+i}=A_i$ for $1\leq i \leq u$. Moreover, 
\eqref{nu<=n} yields $\nu \leq n$, and, from
\eqref{URnu} and \eqref{n'}, we get
\[
C_{r+1}=\sigma_{j+r+1}-\sigma_r-\nu \geq
\sigma_{j+r+1}-\sigma_r-n=p_{j+r+1}-n'.
\]
Therefore, the maximum $\cm$ in \eqref{opt} satisfies
\[
\cm=
\frac{A_1 A_2 \ldots A_u}{p_1 p_2 \ldots p_r C_{r+1} A_1 A_2 \ldots A_u}
\leq \frac{1}{p_1 p_2 \ldots p_r (p_{j+r+1}-n')}
\]
which, via \eqref{hj<=M}, proves \eqref{hj<=}.
\end{proof}

\begin{prop}\label{propminhj}
With the notation of Proposition \ref{propbouhj} , we have
\begin{equation}\label{hj>=}
h_j(n)  \geq   \frac{N_{j+r+1}}{N_r (p_{j+r+1}-n')^\star}=\frac{N_{j+r+1}}{q N_r }
\end{equation} 
where $q=(p_{j+r+1}-n')^\star$ is the smallest prime satisfying $q\geq p_{j+r+1}-n'$.
\end{prop}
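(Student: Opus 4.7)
The plan is to exhibit an explicit squarefree integer $M$ with $\omega(M)=j$ and $\ell(M)\le n$ that equals exactly $N_{j+r+1}/(qN_r)$; the lower bound then follows directly from the definition \eqref{hj} of $h_j(n)$.

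First I would locate $q$ among the primes $p_{r+1},\ldots,p_{j+r+1}$. From \eqref{n'} we have $n' < p_{j+r+1}-p_{r+1}$, hence $p_{j+r+1}-n' > p_{r+1}$; since $q$ is a prime with $q\geq p_{j+r+1}-n' > p_{r+1}$, we get $q\geq p_{r+2}$. On the other hand, $p_{j+r+1}$ itself is a prime satisfying $p_{j+r+1}\geq p_{j+r+1}-n'$, so the minimality of $q=(p_{j+r+1}-n')^\star$ forces $q\leq p_{j+r+1}$. Thus $q=p_s$ for some index $s$ with $r+2\leq s\leq j+r+1$.

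Second, I would define
\[
M=\frac{N_{j+r+1}}{q\, N_r}=\frac{p_{r+1}p_{r+2}\cdots p_{j+r+1}}{p_s},
\]
i.e.\ the product of the $j+1$ consecutive primes $p_{r+1},\ldots,p_{j+r+1}$ with the prime $p_s$ deleted. Since $r+2\leq s\leq j+r+1$, the quotient is a genuine product of $j$ distinct primes, so $\mu(M)\neq 0$ and $\omega(M)=j$.

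Third, I would check that $\ell(M)\leq n$. A direct computation gives
\[
\ell(M)=(\sigma_{j+r+1}-\sigma_r)-p_s.
\]
Using $p_s=q\geq p_{j+r+1}-n'$ and $\sigma_{j+r+1}=\sigma_{j+r}+p_{j+r+1}$, this yields
\[
\ell(M)\leq \sigma_{j+r+1}-\sigma_r-(p_{j+r+1}-n')=\sigma_{j+r}-\sigma_r+n'=n,
\]
by \eqref{n'}. Applying \eqref{hj}, we conclude $h_j(n)\geq M = N_{j+r+1}/(qN_r)$, which is \eqref{hj>=}.

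The proof is essentially constructive and I do not expect a serious obstacle; the only point requiring care is the double-sided containment $p_{r+2}\leq q\leq p_{j+r+1}$, which ensures that dividing $p_{r+1}\cdots p_{j+r+1}$ by $q$ produces an honest squarefree integer of the right $\omega$. Both inequalities are forced by \eqref{n'} together with the primality of $p_{j+r+1}$.
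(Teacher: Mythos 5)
Your proof is correct and is essentially identical to the paper's: both exhibit $M=N_{j+r+1}/(qN_r)$ as a squarefree integer with $\omega(M)=j$, verify $\ell(M)=\sigma_{j+r+1}-\sigma_r-q\le n$ via \eqref{n'}, and invoke \eqref{hj}. The only cosmetic difference is that you note the sharper containment $q\ge p_{r+2}$ where the paper only uses $q\ge p_{r+1}$, which does not change the argument.
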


\begin{proof}
From \eqref{n'}, we have $p_{r+1} < p_{j+r+1}-n' \leq p_{j+r+1}$ which
implies $p_{r+1} \leq q\leq p_{j+r+1}$ so that  $M=N_{j+r+1}/ (q N_r )$
is an integer with exactly $j$ prime factors. Further, by \eqref{n'},
we have
\[
\ell(M)=\sigma_{j+r+1}-\sigma_r-q\leq
\sigma_{j+r+1}-\sigma_r-(p_{j+r+1}-n')=n
\]
and, by \eqref{hj}, $h_j(n) \geq M$ holds.
\end{proof}

\begin{coro}\label{corohjq}
We keep the notation of Proposition \ref{propbouhj}; if
$q=p_{j+r+1}-n'$ is prime then 
\begin{equation}\label{hj=}
h_j(n) =  h_j(\sigma_{j+r+1}-\sigma_r-q)=\frac{N_{j+r+1}}{q N_r }\cdot
\end{equation} 
\end{coro}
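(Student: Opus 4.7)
My plan is to combine the two-sided bounds from Propositions \ref{propbouhj} and \ref{propminhj} and observe that, under the hypothesis that $q = p_{j+r+1}-n'$ is already prime, the lower and upper bounds collapse onto the same value.

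First, I note the arithmetic identity that glues the two equalities in \eqref{hj=} together. By the definition of $n'$ in \eqref{n'},
\[
\sigma_{j+r+1}-\sigma_r - q = \sigma_{j+r+1}-\sigma_r - (p_{j+r+1}-n') = (\sigma_{j+r}-\sigma_r) + n' = n,
\]
so $h_j(\sigma_{j+r+1}-\sigma_r-q) = h_j(n)$ trivially; the content of the corollary is therefore the evaluation $h_j(n) = N_{j+r+1}/(qN_r)$.

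Second, I verify that under the hypothesis, the smallest prime $\geq p_{j+r+1}-n'$ used in Proposition \ref{propminhj} equals $p_{j+r+1}-n'$ itself: indeed, \eqref{n'} gives $p_{r+1} < p_{j+r+1}-n' \leq p_{j+r+1}$, so if this integer is prime then $(p_{j+r+1}-n')^\star = p_{j+r+1}-n' = q$. Applying Proposition \ref{propminhj} therefore gives
\[
h_j(n) \geq \frac{N_{j+r+1}}{q N_r}.
\]
On the other hand, Proposition \ref{propbouhj} yields
\[
h_j(n) \leq \frac{N_{j+r+1}}{N_r(p_{j+r+1}-n')} = \frac{N_{j+r+1}}{q N_r},
\]
since the denominator $p_{j+r+1}-n' = q$ by hypothesis. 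Combining the two inequalities gives \eqref{hj=}.

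There is no hard step here: once one recognizes that the primality hypothesis forces the $(\cdot)^\star$ operator in Proposition \ref{propminhj} to act trivially, the lower and upper bounds of Propositions \ref{propminhj} and \ref{propbouhj} coincide, and the identity $n = \sigma_{j+r+1}-\sigma_r-q$ takes care of the alternative form in \eqref{hj=}. The only thing I would double-check is that $q$ lies in the prescribed range $(p_{r+1}, p_{j+r+1}]$ so that $M = N_{j+r+1}/(qN_r)$ is indeed a squarefree integer with exactly $j$ prime factors, which is immediate from \eqref{n'}.
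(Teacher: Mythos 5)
Your proof is correct and matches the paper's intended argument, which simply states that the corollary follows from Propositions \ref{propbouhj} and \ref{propminhj}. You spell out exactly the right details: the primality hypothesis makes $(p_{j+r+1}-n')^\star = q$, collapsing the upper and lower bounds, and the arithmetic identity $\sigma_{j+r+1}-\sigma_r-q = n$ gives the alternative form in \eqref{hj=}.
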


\begin{proof}
Corollary \ref{corohjq} follows from Propositions \ref{propbouhj}
and \ref{propminhj}.
\end{proof}

\section{A parity phenomenon}\label{parparphe}

\begin{prop}\label{propparphe1}
Let $k\geq 2$ be an integer and  $a$ be an even number satisfying 
$4 \leq a < p_{k+1}$ and $h_k$ defined by \eqref{hj}. We have
\begin{equation}\label{hkSk+1-a}
h_k(\sigma_{k+1}-a)=h_k(\sigma_{k+1}-a-1).
\end{equation} 
\end{prop}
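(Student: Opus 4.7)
The plan is to exploit the monotonicity of $h_k(n)$ together with a parity argument. Setting $n_1 = \sigma_{k+1} - a$ and $n_2 = n_1 - 1$, monotonicity immediately gives $h_k(n_1) \geq h_k(n_2)$, and both quantities are well defined since the hypothesis $a < p_{k+1}$ yields $n_2 \geq \sigma_k$ (indeed $n_1 = \sigma_k + p_{k+1} - a > \sigma_k$, so $n_2 \geq \sigma_k$). For the reverse inequality, if $M$ denotes any squarefree integer with $\om(M) = k$ attaining the maximum in \eqref{hj} at $n = n_1$, and if we can show $\ell(M) < n_1$, then $M$ is admissible for $h_k(n_2)$, so $h_k(n_2) \geq M = h_k(n_1)$ and equality follows. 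The proposition therefore reduces to showing that no squarefree $M$ with $\om(M) = k$ satisfies $\ell(M) = \sigma_{k+1} - a$.

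For this final claim I would argue by parity. First, $\sigma_{k+1} = 2 + (3 + 5 + \cdots + p_{k+1})$ is the sum of $2$ with the $k$ odd primes $p_2, \ldots, p_{k+1}$, hence $\sigma_{k+1} \equiv k \pmod{2}$; since $a$ is even, also $\sigma_{k+1} - a \equiv k \pmod{2}$. I then split according to whether $2 \mid M$. If $2 \mid M$, then $\ell(M) = 2 + (\text{sum of } k - 1 \text{ odd primes}) \equiv k + 1 \pmod{2}$, which contradicts the parity of $\sigma_{k+1} - a$. If on the contrary $2 \nmid M$, then $M$ is a product of $k$ distinct odd primes, whose sum is minimised by taking the $k$ smallest odd primes $3, 5, \ldots, p_{k+1}$; hence $\ell(M) \geq 3 + 5 + \cdots + p_{k+1} = \sigma_{k+1} - 2$. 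But the hypothesis $a \geq 4$ gives $\ell(M) = \sigma_{k+1} - a \leq \sigma_{k+1} - 4$, contradicting this lower bound.

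I do not expect any real obstacle: the argument is entirely elementary once one notices that the two hypotheses on $a$ play complementary roles, the evenness of $a$ triggering the parity conflict when $2 \mid M$, and the bound $a \geq 4$ triggering the size conflict when $2 \nmid M$. No machinery beyond the definition \eqref{hj} and the trivial monotonicity $n \mapsto h_k(n)$ is required.
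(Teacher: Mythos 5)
Your proof is correct and takes essentially the same approach as the paper's own argument: both rely on the monotonicity of $n\mapsto h_k(n)$ together with the two complementary observations that (i) an odd $M$ with $\om(M)=k$ forces $\ell(M)\geq 3+5+\cdots+p_{k+1}=\sigma_{k+1}-2$, ruled out by $a\geq 4$, and (ii) an even $M$ with $\om(M)=k$ has $\ell(M)\equiv k-1\pmod 2$, while $\sigma_{k+1}-a\equiv k\pmod 2$ since $a$ is even. The only cosmetic difference is the order of exposition: you split symmetrically on $2\mid M$ versus $2\nmid M$, whereas the paper first eliminates the odd case to conclude $M$ is even and then invokes the parity contradiction; the underlying lemmas are identical.
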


\begin{proof}
Since $n\mapsto h_k(n)$ in non-decreasing, we have
\begin{equation}\label{propparphe11}
h_k(\sigma_{k+1}-a) \geq h_k(\sigma_{k+1}-a-1).
\end{equation} 
Let us set $n=\sigma_{k+1}-a$ and note that $n$ satisfies $\sigma_k <
n < \sigma_{k+1}$ so that, from \eqref{k}, $k=k(n)=k(n-1)$.  Let $M$
be a positive squarefree integer such that $\ell(M)\leq n$ and
$\om(M)=k$. Such a number $M$ is even; if not, we would have $\ell(M)
\geq 3+5+\ldots + p_{k+1}=\sigma_{k+1}-2$ in contradiction with
$\ell(M)\leq n=\sigma_{k+1}-a \leq \sigma_{k+1}-4$. Therefore,
$\ell(M)$ is the sum of $2$ and $k-1$ odd numbers, so that
$\ell(M) \equiv \sigma_k \equiv \sigma_{k+1}+1 \equiv \sigma_{k+1}-a-1
\pmod{2}$.  So, $\ell(M)$ cannot be equal to $\sigma_{k+1}-a$ and
$\ell(M)\leq \sigma_{k+1}-a-1$ holds. Thus, from \eqref{hj}, we get
$h_k(\sigma_{k+1}-a) \leq h_k(\sigma_{k+1}-a-1)$, which, with
\eqref{propparphe11}, proves \eqref{hkSk+1-a}.
\end{proof}

\begin{prop}\label{propparphe2}
Let $k$ be an integer, $k\geq 2$, and  $q$ a prime number satisfying 
$3 \leq q \leq p_{k}$. By setting $m=\sigma_{k+1}-q-1$, we have
\begin{equation}\label{hkb}
h_{k-1}(m)=h_{k-1}(m-1)= h_{k-1}(\sigma_{k+1}-q-2)=
\frac{N_{k+1}}{2q} \cdot
\end{equation} 
\end{prop}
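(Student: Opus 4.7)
The plan is to bracket the three quantities in \eqref{hkb} between the common value $M_0 := N_{k+1}/(2q)$, taken as a lower bound, and $M_0$ again, taken as an upper bound via a parity argument. Since $3\le q\le p_k$, the integer $q$ divides $N_{k+1}$, so $M_0$ is squarefree with exactly $k-1$ prime factors and $\ell(M_0) = \sigma_{k+1}-2-q = m-1$. By \eqref{hj} and monotonicity of $h_{k-1}$, this gives
\[
M_0 \;\le\; h_{k-1}(m-1) \;\le\; h_{k-1}(m).
\]
To pin $h_{k-1}(m-1)$ down exactly, I would apply Corollary \ref{corohjq}. For $q\ge 5$, take $r=1$: then $n=m-1$ lies in the range \eqref{encn} with $n'=p_{k+1}-q$, and $p_{j+r+1}-n' = q$ is prime, giving $h_{k-1}(m-1)=N_{k+1}/(2q)=M_0$. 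For $q=3$, take $r=2$ instead: $n=\sigma_{k+1}-5$ sits at the left endpoint of the range, $n'=0$, and $p_{j+r+1}-n'=p_{k+2}$ is prime (since $k\ge 2$ gives $p_{k+2}\ge 7$), again yielding $h_{k-1}(m-1)=N_{k+2}/(p_{k+2}N_2)=N_{k+1}/6=M_0$.

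For the matching upper bound $h_{k-1}(m)\le M_0$, I would use parity. Since $\sigma_{k+1}$ is $2$ plus a sum of $k$ odd primes, $\sigma_{k+1}\equiv k\pmod 2$, and $m=\sigma_{k+1}-q-1\equiv k\pmod 2$ because $q$ is odd. Let $M$ be any squarefree integer with $\omega(M)=k-1$ and $\ell(M)\le m$. If $M$ is odd, then $\ell(M)$ is a sum of $k-1$ odd primes, hence $\ell(M)\equiv k-1\pmod 2$, forcing $\ell(M)\le m-1$ and so $M\le h_{k-1}(m-1)=M_0$. If $M=2M'$ is even, then $M'$ is odd squarefree with $\omega(M')=k-2$ and $\ell(M')\le m-2=\sigma_{k+1}-q-3$, and I would bound $M'$ by replaying the optimization argument of Proposition \ref{propbouhj} in the universe of odd primes only: for $q\ge 11$, the analog of $r=1$ produces $M\le N_{k+1}/(3(q-2))\le M_0$ (since $3(q-2)\ge 2q$ for $q\ge 6$); for $q\in\{3,5,7\}$, the analog of $r=2$ gives the estimate from $p_{k+2}\ge 7$, and the two corners $q=3$, $k\in\{2,3\}$ (where $M'$ has $0$ or $1$ prime factor) are checked by hand against the table.

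The main obstacle is the even case. Proposition \ref{propbouhj} applied directly to $M$ only yields $M\le N_{k+1}/(2(q-1))$, which is too weak by a factor $q/(q-1)$. The required tightening either runs the optimization of Proposition \ref{propbouhj} with the prime $2$ excluded from the admissible candidates (equivalent to shifting the index offsets by one), or applies the proposition to $M/2$ inside the semigroup of odd integers; in both routes this is followed by a short $q$-by-$q$ verification. Once this is done, combining the two parities gives $h_{k-1}(m)\le M_0$, and the sandwich together with $m-1=\sigma_{k+1}-q-2$ yields all three equalities in \eqref{hkb}.
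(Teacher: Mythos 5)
Your structure matches the paper closely: lower bound via Corollary~\ref{corohjq}, monotonicity, then the parity observation to upgrade $\ell(M)\le m$ to $\ell(M)\le m-1$ for odd $M$, and finally the bound $M\le 2\,h_{k-2}(m-2)$ for even $M$. The parity computation is correct and equivalent to the paper's. Your treatment of the $q=3$ case of the lower bound via $r=2$ is actually a bit more careful than the paper, which states $r=1$ throughout (and at $q=3$ the range condition \eqref{n'} holds only with equality, whence $r$ should really be $2$); your $r=2$ derivation yields $N_{k+2}/(p_{k+2}N_2)=N_{k+1}/6$, which is the right value. The paper also disposes of the boundary cases $(k,q)\in\{(2,3),(3,3),(3,5)\}$ by table lookup before assuming $k\ge 4$, just as you propose.

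Where you overcomplicate things is the even case. You correctly diagnose that applying Proposition~\ref{propbouhj} to $M$ at $j=k-1$ loses a factor $q/(q-1)$, but your two proposed fixes — rerunning the optimization with $2$ excluded, or applying the proposition to $M/2$ ``inside the semigroup of odd integers'' — both invoke a variant of Proposition~\ref{propbouhj} that has not been proved. None of that is needed: once $M=2M'$ you simply have $M'\le h_{k-2}(m-2)$ because $h_{k-2}$ bounds \emph{all} squarefree integers with $k-2$ prime factors, whether odd or not, and then Proposition~\ref{propbouhj} applies unchanged to $h_{k-2}(m-2)$ with $j=k-2$ and $r=2$ (for $q\ge 11$) or $r=3$ (for $q\in\{3,5,7\}$), giving $h_{k-2}(m-2)\le N_{k+1}/(6(q-2))$ respectively $h_{k-2}(m-2)\le N_{k+2}/(30(p_{k+2}+q-7))$, whence $M\le N_{k+1}/(3(q-2))<N_{k+1}/(2q)$ in the first subcase and a similar strict inequality in the second. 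The drop from $j=k-1$ to $j=k-2$ shifts $r$ by one automatically, which is exactly the ``index offset by one'' you were gesturing at — so your bound $N_{k+1}/(3(q-2))$ is correct, and the modification of Proposition~\ref{propbouhj} you thought was required is in fact superfluous.
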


\begin{proof}
From the table of Figure \ref{figtabhj}, we have $h_1(6) = h_1(5) = 5$,
$h_2(11) = h_2(10) = 21$ and $h_2(13) = h_2(12) = 35$ so 
that the proposition is true for $k=2$, $q=3$ and for
$k=3$ and $q=3$ or $5$. So, from now on, we assume $k\geq 4$.
Corollary \ref{corohjq} with $j=k-1$, $r=1$ implies 
$h_{k-1}(m-1)=N_{k+1}/(2q)$ and, since $n \mapsto h_{k-1}(n)$
is non-decreasing, it follows that
\begin{equation}\label{hkb>}
h_{k-1}(m) \geq  h_{k-1}(m-1)=
\frac{N_{k+1}}{2q} \cdot
\end{equation} 
Let $M$ be a positive squarefree integer satisfying $\ell(M) \le m$
and $\omega(M) = k-1$. In view of \eqref{hkb>} and \eqref{hj},
to prove that $h_{k-1}(m) = N_{k+1}/(2q)$, it suffices to show that
\begin{equation}\label{hkb>bis}
M \le \frac{N_{k+1}}{2q}\cdot
\end{equation}
If $M$ is odd, $\ell(M)$ is
the sum of $k-1$ odd numbers, which implies
\[
\ell(M)\equiv \sigma_k\equiv \sigma_{k+1}-q = m+1 \pmod{2}.
\]
So, $\ell(M)$ cannot be equal to $m$; since, by \eqref{hj}, $\ell(M) \leq
m$ holds, we should have $\ell(M) \leq m -1$; therefore, from
\eqref{hj}, we get
\begin{equation}\label{h''<}
M \le  h_{k-1}(m-1)= \frac{N_{k+1}}{2q} \cdot
\end{equation} 
If $M$ is even, we have $\om(M/2)=k-2$ and $\ell(M/2)\leq
m-2$, so that
\begin{equation}\label{M2h}
M \le 2 h_{k-2}(m-2).
\end{equation} 
\begin{itemize}
\item
If $q\geq 11$, since we have assumed $k\geq 4$,
i.e. $p_{k+1}\geq 11$, we have
\[
\sigma_k-5 \leq m-2 =\sigma_{k+1}-q-3 
\le \sigma_{k+1}-14
< \sigma_{k+1}-10.
\]
By Proposition \ref{propbouhj} with $j=k-2$, $r=2$, $n=m-2$,
$n'=n-(\sigma_k-\sigma_2)=p_{k+1}-q+2$, we get
\[
h_{k-2}(m-2)\leq \frac{N_{k+1}}{6(q-2)}
\]
which, by \eqref{M2h}, gives
\begin{equation}\label{h'<1}
M  \leq \frac{N_{k+1}}{3(q-2)} = 
\frac{N_{k+1}}{3q} \frac{q}{q-2} \leq \frac{N_{k+1}}{3q} \frac{11}{9} <
\frac{N_{k+1}}{2q} \cdot
\end{equation} 
\item If $q\in \{3,5,7\}$, since $k\geq 4$ and $p_{k+2}\geq
p_6=13$, we have
\[
\sigma_{k+1}-10 \le m-2 = \sigma_{k+1}-q-3 \le \sigma_{k+1}-6
< \sigma_{k+2}-17.
\]
and Proposition \ref{propbouhj} with $j=k-2$, $r=3$, $n=m-2$
and  $n'=n-(\sigma_{k+1}-10)=7-q$ yields
\begin{equation}\label{hkm2}
h_{k-2}(m-2) \leq  \frac{N_{k+2}}{30(p_{k+2}+q-7)} \cdot
\end{equation} 
Using $q\leq 7$, $k\geq 4$ and $p_{k+2}\geq p_6=13$ gives
\begin{eqnarray*}
\frac{N_{k+2}}{30(p_{k+2}+q-7)} &=&\frac{N_{k+1}}{30}
\frac{p_{k+2}}{p_{k+2}+q-7}
\leq \frac{N_{k+1}}{30} \frac{13}{q+6}\\
&=&
\frac{13N_{k+1}}{30 q} \frac{q}{q+6} \leq
\frac{13N_{k+1}}{30 q} \frac{7}{13} < \frac{N_{k+1}}{4q}
\end{eqnarray*}
which, together with \eqref{M2h} and \eqref{hkm2}, proves
\begin{equation}\label{h'<2}
M <   \frac{N_{k+1}}{2q} \cdot
\end{equation} 
Inequalities \eqref{h''<}, \eqref{h'<1} and \eqref{h'<2} prove
that \eqref{hkb>bis} holds, which,
with \eqref{hkb>}, completes the proof of \eqref{hkb}.
\end{itemize}
\end{proof}

\begin{prop}\label{propparphe3}
Let $k$ be a positive integer and  $m=\sigma_{k+1}-1$; we have
\begin{equation}\label{hkb2}
h_{k}(m)=h_{k}(m-1)= h_{k}(\sigma_{k+1}-2)=\frac{N_{k+1}}{2} \cdot
\end{equation} 
\end{prop}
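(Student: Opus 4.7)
The plan is to establish the middle equality $h_k(\sigma_{k+1}-2) = N_{k+1}/2$ by a direct appeal to Corollary \ref{corohjq}, and then promote it to $h_k(m) = N_{k+1}/2$ by a parity argument in the spirit of Propositions \ref{propparphe1} and \ref{propparphe2}. The only subtlety lies in the even case of the parity split, where the bound from Proposition \ref{propbouhj} must be verified to be sharp enough; this works cleanly for $k \ge 3$ and requires small separate checks for $k \in \{1,2\}$.

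First, I apply Corollary \ref{corohjq} with $j = k$ and $n = \sigma_{k+1} - 2$. The conditions \eqref{encn}--\eqref{n'} are satisfied with $r = 1$ and $n' = 0$, since $\sigma_{k+1} - \sigma_1 = \sigma_{k+1} - 2 = n$ while $\sigma_{k+2} - \sigma_2 = \sigma_{k+1} + p_{k+2} - 5 > n$. Because $q = p_{k+2} - n' = p_{k+2}$ is prime, the corollary yields $h_k(\sigma_{k+1} - 2) = N_{k+2}/(p_{k+2} N_1) = N_{k+1}/2$. Since $h_k$ is non-decreasing, $h_k(m) \ge h_k(m-1) = N_{k+1}/2$, so it remains to prove the matching upper bound $h_k(m) \le N_{k+1}/2$.

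For the upper bound, let $M$ be any squarefree integer with $\omega(M) = k$ and $\ell(M) \le m = \sigma_{k+1} - 1$. Since $\sigma_{k+1} \equiv k \pmod 2$, we have $\sigma_{k+1} - 1 \equiv k+1 \pmod 2$. If $M$ is odd, $\ell(M)$ is a sum of $k$ odd primes and so $\ell(M) \equiv k \pmod 2$; hence $\ell(M) \ne \sigma_{k+1} - 1$, which combined with $\ell(M) \le \sigma_{k+1} - 1$ forces $\ell(M) \le \sigma_{k+1} - 2$. Together with the trivial lower bound $\ell(M) \ge 3+5+\ldots+p_{k+1} = \sigma_{k+1}-2$ and the uniqueness of its minimiser, this forces $M = 3 \cdot 5 \cdots p_{k+1} = N_{k+1}/2$. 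If $M$ is even, write $M = 2M'$ with $M'$ odd squarefree, $\omega(M') = k-1$ and $\ell(M') \le \sigma_{k+1}-3$. For $k \ge 3$, I apply Proposition \ref{propbouhj} with $j = k-1$, $r = 2$, $n' = 2$, obtaining $M' \le N_{k+2}/(N_2(p_{k+2}-2)) = p_{k+2} N_{k+1}/(6(p_{k+2}-2))$; the desired inequality $2M' \le N_{k+1}/2$ then reduces to $p_{k+2} \ge 6$, which holds because $p_{k+2} \ge 11$. The remaining cases $k = 1, 2$ are handled directly: for $k = 1$, $M$ is a prime $\le 4$, so $M \le 3 = N_2/2$; for $k = 2$ with $M$ even, $M = 2p$ with $p$ an odd prime satisfying $p \le 7$, so $M \le 14 < 15 = N_3/2$.
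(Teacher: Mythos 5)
Your proof is correct and follows essentially the same route as the paper's: lower bound via Proposition~\ref{propSjSr} (you use the equivalent Corollary~\ref{corohjq}), then a parity split with Proposition~\ref{propbouhj} controlling the even case. You are in fact slightly more careful than the paper: the hypothesis~\eqref{encn} of Proposition~\ref{propbouhj} with $j=k-1$, $r=2$, $n=\sigma_{k+1}-3$ requires $p_{k+2}>7$, i.e.\ $k\ge 3$, whereas the paper invokes it already for $k\ge 2$ (where $n=\sigma_{k+2}-\sigma_3$ sits on the excluded boundary), so your separate verification of $k=2$ closes a small gap.
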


\begin{proof}
It is the same proof than for Proposition \ref{propparphe2}. By
Proposition \ref{propSjSr} with $j=k$ and $r=1$, we have
\[
h_{k}(m) \geq  h_{k}(m-1)=\frac{N_{k+1}}{2} \cdot
\]
Further, let $M$ be a positive integer satisfying
$\ell(M) \le m$ and $\omega(M) = k$.
If $M$ is odd, by the parity phenomenon, we have
$\ell(M)\equiv m-1 \pmod{2}$ so that $\ell(M) \leq m -1$ and
$M \le  h_{k}(m-1)= \frac{N_{k+1}}{2} \cdot$.
If $M$ is even, we have $M\leq 2 h_{k-1}(\sigma_{k+1}-3)$ and,
if $k\geq 2$, i.e. $p_{k+2}\geq 7$,
Proposition \ref{propbouhj} with $j=k-1$, $r=2$, $n=\sigma_{k+1}-3$,
$n'=2$, yields
\[
M\leq 2\frac{N_{k+2}}{6(p_{k+2}-2)} = 
\frac{N_{k+1}}{3} \frac{p_{k+2}}{p_{k+2}-2} \le
\frac{7 N_{k+1}}{15} < \frac{N_{k+1}}{2}\cdot
\]
If $k=1$, it is easy to check that \eqref{hkb2} still holds.
\end{proof}

\section{The increasingness of $h_j(n)$ on $j$}\label{parinchj}

\begin{theorem}\label{thinchj}
Let $n\geq 2$ be an integer and $k=k(n)$ be defined by \eqref{k}; for
$j$ satisfying $1\leq j \leq k$, we have
\begin{equation}\label{hj56}
h_{j-1}(n) \leq \frac 56 h_j(n)
\end{equation}
and \eqref{hj56} is an equality if and only if $j = k(n) \geq 2$ and 
$n=\sigma_{j+1}-4$ or $n=\sigma_{j+1}-5$.
\end{theorem}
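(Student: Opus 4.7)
The plan is to deduce \eqref{hj56} from Proposition \ref{proppp'hj} whenever its hypotheses are satisfied (where it yields the strict form), and to treat the residual cases---which carry all the equality---by direct parity-style arguments modelled on Propositions \ref{propparphe1}--\ref{propparphe3}. The case $j=1$ is immediate, since $h_0(n)=1$ and $h_1(n)=\ppn n\geq 2$, giving $h_0(n)=1 \leq \frac{5}{6} h_1(n)$ with strict inequality.

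Fix now $2\leq j\leq k=k(n)$, and set $M=h_{j-1}(n)$, $Q=\Pp(M)$. If there exist odd primes $p<p'$ with $p,p'\nmid M$ and $p+p'\leq Q$, Proposition \ref{proppp'hj} produces $h_j(n) > \frac{6}{5} M$, strictly better than what \eqref{hj56} requires. Since $M$ has only $j-1$ prime factors, infinitely many odd primes fail to divide $M$, and the only obstruction is that the two smallest such primes might sum to more than $Q$. Using the structural upper bound on $M$ from Proposition \ref{propbouhj} together with Lemma \ref{lempi+pi} to control sums of consecutive primes, I would argue that this obstruction arises only when $j=k$ and $n$ lies on the ``parity plateau'' singled out by Propositions \ref{propparphe2} and \ref{propparphe3}.

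On this plateau, i.e.\ $j=k\geq 2$, Proposition \ref{propparphe2} gives, for each odd prime $q$ with $3\leq q\leq p_k$, the identity $h_{k-1}(\sigma_{k+1}-q-1) = h_{k-1}(\sigma_{k+1}-q-2) = N_{k+1}/(2q)$. The extremal value $q=3$ produces $n\in\{\sigma_{k+1}-4,\sigma_{k+1}-5\}$ and $h_{k-1}(n) = N_{k+1}/6$. To evaluate $h_k(n)$ for such $n$, I would rerun the parity trick used in Section \ref{parparphe}: any $M$ with $\omega(M)=k$ and $\ell(M)\leq \sigma_{k+1}-4$ is forced to be even, because a product of $k$ odd primes satisfies $\ell(M) \geq 3+5+\cdots+p_{k+1} = \sigma_{k+1}-2 > \sigma_{k+1}-4$; writing $M=2M'$, the problem reduces to maximising a product of $k-1$ odd primes with $\ell(M')\leq \sigma_{k+1}-6$, which is attained at $3\cdot 7\cdot 11\cdots p_{k+1}=N_{k+1}/10$. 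Hence $h_k(n)=N_{k+1}/5$, and the ratio is exactly $5/6$.

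For the remaining plateau values (larger $q$ in Proposition \ref{propparphe2} and the situations of Proposition \ref{propparphe3}), the strict inequality follows by combining the upper bound of Proposition \ref{propbouhj} on $h_{j-1}(n)$ with the lower bound of Proposition \ref{propminhj} on $h_j(n)$. The main obstacle is the reduction outlined in the second paragraph: rigorously confirming that the two-prime criterion of Proposition \ref{proppp'hj} fails only on the parity plateau just described, while also dispatching the $p=3,\ p'=5$ exception and the $pp'=15$ clause of Lemma \ref{lemp+p'5/6}; for small $j$ this last point is verified by direct inspection of the table of $h_j$.
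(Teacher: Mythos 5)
The overall plan is loosely related to the paper's (Proposition~\ref{proppp'hj} plus parity analysis near the plateau), but the key reduction you assert in the second paragraph is both unproved and, as stated, false, so there is a genuine hole. You claim that failure of the two-prime hypothesis of Proposition~\ref{proppp'hj} ``arises only when $j=k$ and $n$ lies on the parity plateau.'' In fact that hypothesis can fail across the whole regime where, in the paper's notation, $r_j$ is small compared to $j$ (roughly $r_j \le j+2$), which includes many $n$ with $j < k(n)$. For example when $n$ is just above $\sigma_{j+r_j}-\sigma_{r_j}$ with $r_j$ small, Proposition~\ref{propSjSr} forces $h_{j-1}(n)$ to be a block of $j-1$ consecutive primes around $p_{r_{j-1}+1},\dots,p_{j+r_{j-1}-1}$, so $P^+(h_{j-1}(n))\approx p_{j+r_j}$ while the two smallest omitted \emph{odd} primes are close to $p_{j+1},p_{j+2}$; by Lemma~\ref{lempi+pi}, $p_{j+1}+p_{j+2}$ can be as large as $p_{2j+3}$, which exceeds $p_{j+r_j}$ unless $r_j\ge j+3$. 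This is exactly why the paper's proof splits on $r_j$: the regime $r_j \le j-4$ is handled entirely by the quantitative sandwich of Propositions~\ref{propbouhj} and~\ref{propminhj} combined with Lemma~\ref{lem11/8} (and this is where the equality cases $a\in\{4,5\}$ with $r_j=0$ emerge), and Proposition~\ref{proppp'hj} is invoked only when $r_j \ge j+3$, or in the intermediate strip $j-3\le r_j\le j+2$ with $j\ge 25$ using \eqref{pipib} with $b=7$; the residual finite strip $j\le 24$ is settled numerically up to $n<\sigma_{51}=5350$. Your sketch has no substitute for this first-case analysis, and your appeal to ``the table'' does not cover it, since the table in the appendix stops at $n=50$.

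A secondary gap: in the third paragraph you exhibit $3\cdot7\cdots p_{k+1}=N_{k+1}/10$ as the extremal $M'$ and then declare $h_k(\sigma_{k+1}-4)=N_{k+1}/5$, but you do not prove that no other squarefree $M'$ with $\omega(M')=k-1$ and $\ell(M')\le\sigma_{k+1}-6$ beats it; the paper gets this for free from Proposition~\ref{propparphe1} and Corollary~\ref{corohjq} (with $j=k$, $r=0$, $q=5$). Finally, the last sentence (``for the remaining plateau values the strict inequality follows by combining the upper bound\ldots with the lower bound'') is precisely the paper's first-case computation, which is the substantive part you have deferred rather than carried out. As written, the argument is a plausible outline of where the equality sits, but it does not establish the inequality outside the two-prime regime.
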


\begin{proof}
If $j=1$, it follows from \eqref{h0} and \eqref{h1} that $h_0(n)=1$,
$h_1(n)=\ppn n\geq 2$ and $h_0(n)/h_1(n) \leq 1/2 < 5/6$,
which proves \eqref{hj56}.
So, from now on, we assume $j\geq 2$.

The sequence $(\sigma_{j+r}-\sigma_r)_{r\geq 0}$ is increasing and goes to infinity.
So, we may define $r_j\geq 0$ and $n'_j$ by
\begin{equation}\label{encnj}
\sigma_{j+r_j}-\sigma_{r_j} \leq n < \sigma_{j+r_j+1}-\sigma_{r_j+1}
\end{equation}
and
\begin{equation}\label{n'jdef}
n'_j= n-(\sigma_{j+r_j}  - \sigma_{r_j}).
\end{equation}
We shall consider four cases~: $r_j\leq j-4$,  $r_j \geq j+3$, $j-3
\leq r_j \leq j+2$ and $j\geq 25$,  $j-3
\leq r_j \leq j+2$ and $j\leq 24$.

\subsection*{First case : $r_j\leq j-4$}

From \eqref{pipi1} and our hypothesis $j\geq r_j+4$, we deduce
\begin{equation}\label{prj1}
p_{r_j+1} +p_{r_j+2} \leq p_{2r_j+3} < p_{j+r_j}< p_{j+r_j+1}
\end{equation}
and
\begin{equation}\label{prj2}
p_{r_j+2} +p_{r_j+3} \leq p_{2r_j+5} < p_{j+r_j+2}.
\end{equation}

Let us set 
From \eqref{encnj}, we get
\begin{equation}\label{n'j}
0 \leq n'_j= n-(\sigma_{j+r_j}-\sigma_{r_j}) < p_{j+r_j+1}-p_{r_j+1}
\end{equation}
and applying Proposition \ref{propminhj} yield
\begin{equation}\label{f3}
h_j(n) \geq \frac{N_{j+r_j+1}}{q N_{r_j}}=\frac{N_{j+r_j+1}}{(p_{j+r_j+1}-n'_j)^*\; N_{r_j}}.
\end{equation}
In view of bounding $h_{j-1}(n)$, we have to determine $r_{j-1}$ such
that
\begin{equation}\label{encnj1}
\sigma_{j-1+r_{j-1}}-\sigma_{r_{j-1}} \leq n < \sigma_{j+r_{j-1}}-\sigma_{r_{j-1}+1}.
\end{equation}
We shall distinguish two sub cases.

\subsubsection*{Sub case one, $r_{j-1}=r_j+1$}
Let us asume that 
\begin{equation}\label{f4}
\sigma_{j+r_{j}}-\sigma_{r_{j}} \leq n < \sigma_{j+r_{j}+1}-\sigma_{r_{j}+2}.
\end{equation}
i.e. from \eqref{n'jdef}, 
\begin{equation}\label{f4b}
0\leq n'_j = n-(\sigma_{j+r_j}-\sigma_{r_j}) < p_{j+r_{j}+1}-p_{r_{j}+1}-p_{r_{j}+2}.
\end{equation}
Note that, from \eqref{prj1}, the right hand side of \eqref{f4b} is positive.
Then, we have $r_{j-1}=r_j+1$ since, from \eqref{f4}, 
\begin{eqnarray*}
\sigma_{(j-1)+(r_j+1)}-\sigma_{r_j+1} &\!\! = \!\!&
\sigma_{j+r_j}-\sigma_{r_j+1} < \sigma_{j+r_j}-\sigma_{r_j} \leq n \\
&\!\! < \!\!&  \sigma_{j+r_j+1}-\sigma_{r_{j}+2}   = \sigma_{(j-1)+(r_j+1)+1}-\sigma_{(r_{j}+1)+1}
\end{eqnarray*}
holds. Via \eqref{n'jdef}, this implies that 
\[
n'_{j-1} \stackrel{def}{=\!=}
n-(\sigma_{j-1+r_{j-1}}-\sigma_{r_{j-1}})=n-\sigma_{j+r_{j}}+\sigma_{r_j+1}=n'_j+p_{r_j+1}.
\]
Applying Proposition \ref{propbouhj} and noting that
$j-1+r_{j-1}=j+r_j$  yield
\[
h_{j-1}(n) \leq \frac{N_{j-1+r_{j-1}+1}}{N_{r_{j-1}} (p_{j-1+r_{j-1}+1}-n'_{j-1})}
=\frac{N_{j+r_j+1}}{N_{r_{j}+1} (p_{j+r_{j}+1}-n'_{j}-p_{r_j+1})}
\cdot
\]
By using \eqref{f3}, we get
\begin{equation}\label{hj1/hj}
\frac{h_{j-1}(n)}{h_j(n)}\leq
\frac{(p_{j+r_j+1}-n'_j)^\star}{p_{r_j+1}(p_{j+r_j+1}-n'_j-p_{r_j+1})}\cdot
\end{equation}
From \eqref{n'j}, we have $p_{j+r_j+1}-n'_j > p_{r_j+1} \geq p_1=2$, so
that we may apply Lemma \ref{lem11/8} which, with the help of
\eqref{hj1/hj} and \eqref{f4b}, yields
\[
\frac{h_{j-1}(n)}{h_j(n)}\leq \frac{11}{8 p_{r_j+1}}\left(1+
\frac{p_{r_j+1}}{p_{j+r_j+1}-n'_j-p_{r_j+1}}\right)
< \frac{11}{8}\left(\frac{1}{p_{r_j+1}}+\frac{1}{p_{r_j+2}}\right).
\]
If $r_j \geq 1$,
$\frac{11}{8}\left(\frac{1}{p_{r_j+1}}+\frac{1}{p_{r_j+2}}\right) \leq
\frac{11}{8}\left(\frac{1}{3}+\frac{1}{5}\right) < \frac 56$, which
proves \eqref{hj56}.
\medskip

It remains to consider the case $r_j=0$, which, from
\eqref{f4} and \eqref{k}, implies
$\sigma_j \le n < \sigma_{j+1}$ and  $k(n) = j$.

\eqref{f4b} becomes $0 \le n'_j = n - \sigma_j < p_{j+1}-5$ and, 
by setting $a=p_{j+1}-n'_j$, we get
\begin{equation}\label{a>5}
5 < a=p_{j+1}-n'_j= p_{j+1} + \sigma_j-n = \sigma_{j+1}-n < p_{j+1}
\end{equation}
while \eqref{hj1/hj}  yields
\begin{equation}\label{hjquo}
\frac{h_{j-1}(n)}{h_j(n)}\leq \frac{a^\star}{2(a-2)}\cdot
\end{equation}
By Lemma \ref{lem11/8}, $a^\star \leq \dfrac{11}{8} a$ holds, and, for $a
\geq 12$, $\dfrac{11}{16} \dfrac{a}{a-2} \leq \dfrac{11}{16}
\dfrac{12}{10} < \dfrac 56$, which, via \eqref{hjquo}, proves
\eqref{hj56}.

Since, from \eqref{a>5}, $a > 5$, it remains to study the cases $6\leq
a \leq 11$. If $a=7,9,10,11$, it is easy to check that
$\dfrac{a^\star}{2(a-2)} < \dfrac 56\cdot$
\medskip

If $a=6$ or $a=8$, by Proposition \ref{propparphe1}, \eqref{a>5}
and \eqref{hj>=}  we have
\[
h_j(n)=h_j(\sigma_{j+1}-a)=h_j(\sigma_{j+1}-a-1)=h_j(n-1)
\ge \frac{N_{j+1}}{(a+1)^*}
\]
while, by Proposition \ref{propparphe2}, since $a-1$ is prime, we get
\[
h_{j-1} (n)=h_{j-1}(n-1) = \frac{N_{j+1}}{2(a-1)}
\]
yielding
\[
\frac{h_{j-1}(n)}{h_j(n)}=\frac{h_{j-1}(n-1)}{h_j(n-1)}\leq
\frac{(a+1)^\star}{2(a-1)}=
\begin{cases}
7/10 & \text{ if } a=6\\
11/14 & \text{ if } a=8
\end{cases}
\]
and, in both cases, $\dfrac{h_{j-1}(n)}{h_j(n)} < \dfrac 56$ holds,
which proves \eqref{hj56}.

\subsubsection*{Sub case two, $r_{j-1}=r_j+2$}
Now, we asume that \eqref{encnj} holds but not \eqref{f4}; thus we have
\begin{equation}\label{f5}
\sigma_{j+r_{j}+1}-\sigma_{r_{j}+2} \leq n < \sigma_{j+r_{j}+1}-\sigma_{r_{j}+1}
\end{equation}
and, from \eqref{prj2},
\begin{equation}\label{f4b2}
 p_{j+r_{j}+1}-p_{r_{j}+1}-p_{r_{j}+2} \leq n'_j < p_{j+r_{j}+1}-p_{r_{j}+1}.
\end{equation}
Here, we get $r_{j-1}=r_j+2$, since we have
\[
\sigma_{j-1+r_j+2}-\sigma_{r_j+2}  \leq n < \sigma_{j+r_j+1}-\sigma_{r_{j}+1}   
< \sigma_{j+r_j+2}-\sigma_{r_{j}+3}
\] 
by observing that, from \eqref{prj2},
\[
\sigma_{j+r_j+2}-\sigma_{r_{j}+3} -(\sigma_{j+r_j+1}-\sigma_{r_{j}+1} )=
p_{j+r_j+2}-p_{r_j+2}-p_{r_j+3} > 0
\]
holds. Now, we have
\begin{eqnarray}\label{n'jm1}
0 \leq \;\; n'_{j-1} &=&
n-(\sigma_{j-1+r_{j-1}}-\sigma_{r_{j-1}})=n-\sigma_{j+r_j+1}+\sigma_{r_j+2}\notag\\
&=& n'_j+\sigma_{j+r_j}-\sigma_{r_j}-\sigma_{j+r_j+1}+\sigma_{r_j+2}\qquad\;\;\, 
\text{ from \eqref{n'j}}\notag\\
&=& n'_j-p_{j+r_{j}+1}+p_{r_{j}+1}+p_{r_{j}+2} < p_{r_{j}+2} \qquad
\text{ from \eqref{f4b2}}
\end{eqnarray}
and applying Proposition \ref{propbouhj} gives
\[
h_{j-1}(n) \leq \frac{N_{j-1+r_{j-1}+1}}{N_{r_{j-1}}(p_{j-1+r_{j-1}+1}-n'_{j-1})}
=\frac{N_{j+r_{j}+2}}{N_{r_{j}+2}(p_{j+r_{j}+2}-n'_{j-1})}
\]
while Proposition \ref{propminhj} yields
\[
h_j(n) \geq
\frac{N_{j+r_{j}+1}}{N_{r_{j}}(p_{j+r_{j}+1}-n'_{j})^\star}\cdot
\]
We set $a=p_{j+r_{j}+1}-n'_j$ and
$\De=p_{j+r_{j}+2}-p_{r_{j}+1}-p_{r_{j}+2}$ so that \eqref{n'jm1}
allows to write $p_{j+r_j+2}-n'_{j-1} = \De+a$, and we have
\begin{equation}\label{hjquo2}
\frac{h_{j-1}(n)}{h_j(n)}\leq 
\frac{p_{j+r_j+2}}{p_{r_j+1}p_{r_j+2}} \;\frac{a^\star}{\De+a}\cdot
\end{equation}
\eqref{n'jm1} can be rewritten as
\begin{equation}\label{n'jm1b}
2=p_1\leq p_{r_j+1} < a = p_{j+r_j+1} -n'_j \leq  p_{r_j+1}+p_{r_j+2}.
\end{equation}
Lemma \ref{lem11/8} implies $a^\star \leq 11 a/8$ and, by \eqref{prj1},
$\De > 0$ holds, so that the homographic function $t\mapsto t/(\De+t)$
is increasing. 
From \eqref{n'jm1b}, we thus have
\[
\frac{a}{\De+a} \le \frac{p_{r_j+1}+p_{r_j+2}}{\De+p_{r_j+1}+p_{r_j+2}}
= \frac{p_{r_j+1}+p_{r_j+2}}{p_{j+r_j+2}}\cdot
\]
Therefore, we get
\[
\frac{h_{j-1}(n)}{h_j(n)}\leq 
\frac{11}{8}\frac{p_{r_j+1}+p_{r_j+2}}{p_{r_j+1}p_{r_j+2}}
=\frac{11}{8}\left(\frac{1}{p_{r_j+1}}+\frac{1}{p_{r_j+2}}\right)
\]
which is smaller than $\dfrac 56$ if $r_j \geq 1$.
\smallskip

It remains to consider the case $r_j=0, r_{j-1}=2$.
Formula \eqref{hjquo2} becomes
\begin{equation}\label{hjquo3}
\frac{h_{j-1}(n)}{h_j(n)}\leq \frac{p_{j+2}}{6} \frac{a^\star}{p_{j+2}-5+a}
\end{equation}
while \eqref{n'jm1b} via \eqref{n'jdef} becomes
\begin{equation}\label{apjrj}
2 < a=p_{j+1}-n'_j=\sigma_{j+1}-n\leq 5.
\end{equation}
Since $j \ge 2$ holds, note that \eqref{apjrj} implies
$\sigma_j \le \sigma_{j+1} - 5 \le n < \sigma_{j+1} -2$,
which shows from \eqref{k} that $k(n) = j$.
\begin{itemize}
\item If $a=5$, since $n=\sigma_{j+1}-a=\sigma_{j+1}-5$, by Corollary
\ref{corohjq} with $r=r_j=0$ and $q=5$,  we get $h_{j}(n)=N_{j+1}/5$, while
Proposition \ref{propSjSr} gives
\[
h_{j-1}(\sigma_{j+1} -5)=h_{j-1}(\sigma_{j+1}
-\sigma_2)=\frac{N_{j+1}}{N_2}=\frac{N_{j+1}}{6}\cdot
\]
Therefore, $h_{j-1}(\sigma_{j+1} -5)/h_{j}(\sigma_{j+1} -5)=5/6$.

\item If $a=4$, by Proposition \ref{propparphe1}, we get 
$h_j(\sigma_{j+1}-4)=h_j(\sigma_{j+1}-5)$ and, by Proposition \ref{propparphe2}, 
$h_{j-1}(\sigma_{j+1}-4)=h_{j-1}(\sigma_{j+1}-5)$ so that
\[
\frac{h_{j-1}(\sigma_{j+1} -4)}{h_{j}(\sigma_{j+1} -4)}=
\frac{h_{j-1}(\sigma_{j+1} -5)}{h_{j}(\sigma_{j+1} -5)}=\frac 56\cdot
\]
\item If $a=3$, Formula \eqref{hjquo3} becomes
\[
\frac{h_{j-1}(n)}{h_j(n)}\leq \frac{p_{j+2}}{2(p_{j+2}-2)} \leq
\frac{7}{10} < \frac 56
\]
since $p_{j+2}\geq p_4=7$. 
\end{itemize}
\subsection*{Second case : $r_j\geq j+3$}
From \eqref{encnj}, we deduce $n \geq
\sigma_{j+r_j}-\sigma_{r_j+1}=\sigma_{(j-1)+(r_j+1)}-\sigma_{(r_j+1)}$
and Proposition \ref{propSjSr}, \eqref{lhjn>=}, implies
\begin{equation}\label{lhjm1n}
\ell(h_{j-1}(n)) \geq \sigma_{j+r_j}-\sigma_{r_j+1}.
\end{equation}
Let us now show that
\begin{equation}\label{q=P+}
q \stackrel{def}{=\!=} P^+(h_{j-1}(n))\geq p_{j+r_j}.
\end{equation} 
Indeed, if $q \leq p_{j+r_j-1}$ holds, since $h_{j-1}(n)$ has $j-1$
prime factors, we should have
\begin{eqnarray*}
\ell(h_{j-1}(n)) & \leq & p_{j+r_j-1}+p_{j+r_j-2}+\ldots +p_{r_j+1}\\
& < & p_{j+r_j}+p_{j+r_j-1}+\ldots +p_{r_j+2} =\sigma_{j+r_j}-\sigma_{r_j+1}
\end{eqnarray*}
which would contradict \eqref{lhjm1n}.

Further, among the $j+1$ primes $p_2=3, p_3, \ldots, p_{j+2}$, there
are certainly two primes $p$ and $p'$ not dividing $h_{j-1}(n)$ and
satisfying $3 \leq p < p' \leq p_{j+2}$. By Lemma \ref{lempi+pi},
\eqref{pipi1}, and \eqref{q=P+}, we get
\begin{equation}\label{pp'q}
p+p' \leq p_{j+1}+p_{j+2} \leq p_{2j+3}\leq p_{j+r_j} \leq q=P^+(h_{j-1}(n))
\end{equation}
and, applying Proposition \ref{proppp'hj} proves
$\dfrac{h_{j-1}(n)}{h_{j}(n)} < \dfrac 56\cdot$

\subsection*{Third case : $j-3 \leq r_j \leq j+2$ and $j \geq 25$}
 The proof is the same than for the second case; only, in
 \eqref{pp'q}, instead of \eqref{pipi1}, we use \eqref{pipib} with
 $b=7$, $i=j+2 \geq 27$~:
\[
p+p' \leq p_{j+1}+p_{j+2} < p_{2j-3}\leq p_{j+r_j} \leq
q=P^+(h_{j-1}(n)).
\]

\subsection*{Fourth case : $j-3 \leq r_j \leq j+2$ and $j \leq 24$}
Here, we have $r_j \leq j+2 \leq 26$ and, from \eqref{encnj}, we get
\[
n < \sigma_{j+r_j+1} \leq \sigma_{51}=5350.
\]
So, for $k \leq 50$, $\sigma_k \leq  n < \sigma_{k+1}$ and $1\leq j \leq k$, we
have computed $h_j(n)$ with the algorithm described in Section
\ref{parcomph} and we have checked that, for $j \ge 2$,
\[
\frac{h_{j-1}(n)}{h_{j}(n)}  \leq\frac 56
\]
always holds, with equality if and only if $j=k(n)$ and 
$n=\sigma_{j+1}-4$ or $n=\sigma_{j+1}-5$.
\end{proof}

\begin{coro}\label{coroh=hk}
For all non-negative integer $n\geq 2$, we have
\begin{equation}\label{h61=hk}
h(n)=h_k(n)
\end{equation}
where $k=k(n)$ is defined by \eqref{k}.
\end{coro}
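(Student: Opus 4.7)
The plan is to derive the corollary directly from Theorem \ref{thinchj} together with the obvious partition of the maximum in \eqref{h} according to $\omega(M)$.

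First, I would show that in the definition \eqref{h} of $h(n)$ one may restrict the maximum to squarefree integers $M$ with $0 \leq \omega(M) \leq k(n)$. Indeed, any squarefree $M$ with $\omega(M) = j \geq 1$ satisfies $\ell(M) \geq p_1 + p_2 + \cdots + p_j = \sigma_j$; combining this with $\ell(M) \leq n < \sigma_{k+1}$ (from \eqref{k}) forces $j \leq k = k(n)$. Grouping squarefree $M$'s according to $\omega(M)$ and invoking \eqref{hj} then gives
\[
h(n) = \max_{0 \leq j \leq k(n)} h_j(n).
\]

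Second, I would apply Theorem \ref{thinchj}: for every $j$ with $1 \leq j \leq k(n)$ we have
\[
h_{j-1}(n) \leq \frac{5}{6}\, h_j(n) < h_j(n),
\]
so the finite sequence $h_0(n), h_1(n), \ldots, h_{k(n)}(n)$ is strictly increasing. Hence the maximum is attained at $j = k(n)$, which yields \eqref{h61=hk}. (For $n \geq 2$ we have $\sigma_1 = 2 \leq n$, so $k(n) \geq 1$ and the sequence has at least two terms, making the above argument non-vacuous.)

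There is no real obstacle: the whole content of the corollary is packaged inside Theorem \ref{thinchj}, and the proof reduces to the two bookkeeping steps above. The only point that needs a line of justification is the bound $\omega(M) \leq k(n)$, which follows at once from the definition \eqref{k} of $k(n)$ and the trivial lower bound $\ell(M) \geq \sigma_{\omega(M)}$ valid for any squarefree $M$.
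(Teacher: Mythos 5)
Your proof is correct and follows exactly the paper's own route: decompose the maximum in \eqref{h} over $j=\omega(M)$ with $0\le j\le k(n)$ to get $h(n)=\max_{0\le j\le k(n)}h_j(n)$, then invoke Theorem~\ref{thinchj} to see the sequence $h_0(n),\dots,h_{k(n)}(n)$ is strictly increasing. The extra line justifying $\omega(M)\le k(n)$ via $\ell(M)\ge\sigma_{\omega(M)}$ is a detail the paper leaves implicit but is exactly the intended reasoning.
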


\begin{proof}
From \eqref{h} and \eqref{hj} we have
\[
h(n)=\max_{0\leq j \leq k(n)} h_j(n)
\]
and Theorem \ref{thinchj} yields $h_0(n) < h_1(n) < \ldots < h_{k(n)}(n)$.
\end{proof}

\section{Computation of $\pi_f(x)$}\label{parpifcomp}

Let $f$ be an arithmetic function, i.e a function defined on
positive integers. The simplest way to compute $\pi_f(x)$
defined in \eqref{defsfx} is to generate the primes up to
$x$ by Eratosthenes's sieve, which is too expansive for
large values of $x$. 

\begin{definition}
An arithmetic function $f$ is said to be completely multiplicative
if $f(ab) = f(a) f(b)$ for all $a$ and $b$.
If $f \ne 0$, this implies $f(1) = 1$.
\end{definition}

Following ideas of the german astronomer Meissel, Lagarias, Miller and
Odlyzko gave in \cite{LMO} an algorithm that computes $\pi(x)$ with a
cost \dsm{\bigo{\frac{x^{2/3}}{\log x}}}. In this work they also
remark that their algorithm allows to compute $\pi_f(x)$ for every
completely multiplicative arithmetic function $f$.

This method has been improved in \cite{DRPI} to compute $\pi(x)$ with
a cost \dsm{\bigo{\frac{x^{2/3}}{\log^2 x}}}, provided that all the
arithmetic operations on integers are of constant cost $\bigo 1$, not
depending on the size of the operands.
We show here that this improved algorithm may be used to compute
\dsm{\pi_f(x)} whith a cost which is still
\dsm{\bigo{\frac{x^{2/3}}{\log^2 x}}}, for a large subset of the set
of completely multiplicative arithmetic functions. More precisely we
have the proposition~:

\begin{prop}\label{piprop}
Let $f$ be a completely multiplicative arithmetic function with
integer values.  Let $F$ be the summatory function of $f$,
\begin{equation}\label{defFs}
F(x) = \sum_{n \le x} f(n).
\end{equation}
We suppose that all the ordinary arithmetic operations
about integers are of constant cost $\bigo{1}$, and that
\begin{enumerate}
\item
Each value $f(n)$ may be computed in time $\bigo{1}$,
not depending of the size of $n$.
\item
There is an algorithm computing
\begin{equation}\label{defS0}
S_0(y,x) = \sum_{1\le n \le y} \mu(n)f(n) F\left(\dfrac{x}{n}\right)
\end{equation}
in time \dsm{\bigo{x^{2/3}/\log^2 x}.}
\end{enumerate}
Then, there is an algorithm computing
\dsm{
\pi_f(x) = \sum_{p \le x \atop p \text{ prime}} f(p) 
}
in time \dsm{\bigo{\frac{x^{2/3}}{\log^2 x}}}.

When $F(u)$ can be computed in $\bigo{1}$ time, the second hypothesis is satisfied.
\end{prop}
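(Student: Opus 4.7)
The plan is to adapt the Meissel--Lagarias--Miller--Odlyzko identity, in the refined form of Del\'eglise--Rivat \cite{DRPI}, from $\pi(x)$ to $\pi_f(x)$, using in an essential way that $f$ is completely multiplicative. Set $a = \pi(x^{1/3})$ and $P_a = p_1 p_2 \cdots p_a$. Since $p_a^3 > x$, every $n \ge 2$ satisfying $n \le x$ and $\Pm(n) > p_a$ is either a prime in $(p_a, x]$ or a product $pq$ of two primes with $p_a < p \le q \le x/p$, which gives the identity
\[
\pi_f(x) \;=\; \phi_f(x,a) \;+\; \pi_f(p_a) \;-\; 1 \;-\; P_{2,f}(x,a),
\]
where $\phi_f(x,a) = \sum_{n \le x,\ \Pm(n) > p_a} f(n)$ and $P_{2,f}(x,a) = \sum_{p_a < p \le \sqrt{x}} f(p)(\pi_f(x/p) - \pi_f(p))$. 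Of these four terms, $\pi_f(p_a)$ is produced by a plain Eratosthenes sieve up to $p_a \sim x^{1/3}$ at cost $\bigo{x^{1/3}\log\log x}$, while $P_{2,f}(x,a)$ is precisely of the shape handled by Lemma~\ref{speedup} (with $z=x$ and $u=\sqrt{x}$), hence computable in $\bigo{\sqrt{x}/\log x}$ time once the tables $\PRIME$, $\PIFTAB$, $\PI$ are in place.

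For the main term I exploit complete multiplicativity to turn Legendre's inclusion--exclusion into
\[
\phi_f(x,a) \;=\; \sum_{d \mid P_a} \mu(d)\, f(d)\, F(x/d).
\]
Fix a threshold $y = x^{1/3}$; since every squarefree $d \le y \le p_a$ automatically divides $P_a$, the partial sum over $d \le y$ coincides with $S_0(y,x)$ of \eqref{defS0}, which by hypothesis~2 costs $\bigo{x^{2/3}/\log^2 x}$. For the ``special leaves'' part $\sum_{d \mid P_a,\, d > y} \mu(d) f(d) F(x/d)$, I write each squarefree $d$ as $d = m p$ with $p = \Pp(d)$ and $m = d/p$, and apply line by line the algorithm of \cite{DRPI}: the $(p,m)$ pairs are swept in a controlled order so that the successive needed values $F(x/d)$ are maintained by block sieving. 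The only modification is that the weight $1$ attached to each pair is replaced by $\mu(d) f(d) = -\mu(m) f(m) f(p)$; by hypothesis~1 each $f$-value costs $\bigo{1}$, and since all integer arithmetic is $\bigo{1}$, the weighted Fenwick-type partial sums used in \cite{DRPI} have the same cost as their unweighted counterparts. Summing the four contributions preserves the bound $\bigo{x^{2/3}/\log^2 x}$.

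The concluding sentence of the proposition follows because, when $F$ itself is $\bigo{1}$ to evaluate, the naive computation of $S_0(y,x)$ with $y = x^{1/3}$ costs only $\bigo{x^{1/3}\log\log x}$ after a single M\"obius sieve, well within the target bound. The main obstacle in the proof is the item-by-item verification that every piece of the Del\'eglise--Rivat bookkeeping---ordinary versus special leaves, sparse versus clustered sub-intervals, and the block-sieving updates of $F(x/d)$---remains correct and cost-preserving when the M\"obius weights are multiplied by the values $f(d)$. No new combinatorial difficulty arises: the algorithm only ever queries $F$ at the $\bigo{\sqrt{x}}$ points $x/d$ and $f$ at arguments bounded by $x$, both granted in constant time by the hypotheses, so the verification amounts essentially to a parallel reading of \cite{DRPI} with weighted sums throughout.
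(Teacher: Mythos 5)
Your overall strategy is the same as the paper's: carry the Del\'eglise--Rivat refinement of Meissel--Lagarias--Miller--Odlyzko over to $\pi_f$, using complete multiplicativity to turn $\Phi(x,a)$ into $\sum_{d\mid P_a}\mu(d)f(d)F(x/d)$, splitting into ordinary leaves ($S_0$) and special leaves, and adding the $P_2$ correction. The problem lies in the two details you dispatch too quickly, and the first of them is a genuine error rather than a slip.

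The treatment of $P_{2,f}(x,a)$ is wrong. Lemma~\ref{speedup} sums over primes $q$ with $\sqrt z < q \le u$; with your choice $z=x$, $u=\sqrt x$ that range is empty, so the lemma returns $0$. More fundamentally, the lemma relies on the fact that for $q>\sqrt z$ the quantities $z/q$ drop below $\sqrt z$, so the needed values $\pi_f(z/q)$ already sit in the precomputed table $\PIFTAB$ of size $\pi(u)$. In $P_{2,f}$ the primes $p$ run in $(p_a,\sqrt x\,]$, so the arguments $x/p$ run over $[\sqrt x,\,x/p_a]\subset[\sqrt x,\,x^{2/3}]$; these are far outside any $\bigo{x^{1/3+\veps}}$-sized precomputed table, and they are (essentially) all distinct, so the grouping trick of Lemma~\ref{speedup} buys nothing. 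The paper therefore computes $P_2(x,a)$ by two synchronized block sieves, an auxiliary one on $[1,\sqrt x\,]$ to step through the primes $p$ and a main one on $[1,x/y]$ to read off $\pi_f(x/p)$, for a total cost $\bigo{(x/y)\log_2 y}$. You need that mechanism; Lemma~\ref{speedup} does not apply here.

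A secondary issue is the fixed choice $y=x^{1/3}$. With that value, the special-leaves computation of \cite{DRPI} only achieves $\bigo{x^{2/3+\veps}}$, and the $P_2$ block sieve costs $\bigo{(x/y)\log_2 y}=\bigo{x^{2/3}\log_2 x}$, both worse than the target $\bigo{x^{2/3}/\log^2 x}$. The paper takes $y=x^{1/3}(\log x)^3\log_2 x$ precisely so that $S_3$, $W_1,\dots,W_5$ and the $P_2$ sieve all fit the bound; your phrase ``apply line by line the algorithm of \cite{DRPI}'' hides this choice while your stated $y=x^{1/3}$ contradicts it. Two further slips worth flagging, though they do not block the argument once you actually follow \cite{DRPI}: the special leaves evaluate $\Phi(x/n,\pi(\Pm(n))-1)$, not $F(x/d)$, and the factorization should be $n=mp$ with $p=\Pm(n)$ the \emph{smallest} prime factor, not $\Pp(d)$.
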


\medskip\noindent
{\bf Remarks~:}
\begin{enumerate}
\item
The second hypothesis may seem strange.
Let us give a few words of explanation. 

\begin{itemize}
\item
Our computation of $\pi_f(x)$ begins by choosing $y =
\bigo{x^{1/3+\veps}}$.  Then we compute $S_0 = S_0(y,x)$ (this is the
  \emph{contribution of ordinary leaves} defined in lemma 5.2,
  equation (9) in \cite{DRPI} and in lemma 7.2, equation (7.14) in
  this article).
Function $F$ does not appear elsewhere in the algorithm.  $S_0$ being
computed, the total cost of the other computations is
\dsm{\bigo{x^{2/3}/\log^2 x}}.  Condition (2) ensures that our
algorithm computes $\pi_f(x)$ in time \dsm{\bigo{x^{2/3}/\log^2 x}}.
\item
In many cases, $F(u)$ can be computed in time $\bigo{1}$, 
then the sum \eqref{defS0} can be computed in time $\bigo{y}$,
by precomputing the Möbius function, so that the second hypothesis
is satisfied.
\end{itemize}
\item
In Proposition \ref{piprop} we restrict ourseves to the case of
integer valued functions. The case of real valued functions is more
delicate because of truncation errors.  In \cite{BACH2}, Bach and al. 
have elaborated an algorithm to compute $\pi_f(x)$ where $f(n)=1/n$,
and\\
\dsm{x = 1\,801\,241\,484\,456\,448\,000 = 1.8\ldots \times 10^{18}}.
\end{enumerate}
\medskip

\subsection*{Algorithm for $\pi_f(x)$}

We will describe very briefly our algorithm to compute $\pi_f(x) $,
using notations and formulas which, when replacing $f$ by $1$, reduce
to the correponding ones contained in \cite{DRPI}.

For $b \in \N$, let us define $\Phi(x,b)$ as the sum of the $f(n)$, for
the $n's \in \intf{1}{x}$ that subsist after sieving this interval by
all primes $p_1,p_2, \dots, p_b$,

\begin{equation}\label{defPhi}
\Phi(x,0) = \sum_{1 \le n \le x} f(n)  = F(x)
\qtx{and, for $b \ge 1$, } 
\Phi(x,b) = \sum_{1 \le n\le x \atop \Pm(n) > p_b} f(n)
\end{equation}
For $k \ge 1$ and $b \ge 1$, let us set
\begin{equation}\label{defPk}
P_k(x,b) = 
\sum_{1 \le n \le x \atop{\Omega(n)=k,\ \Pm(n) > p_b }} f(n).
\end{equation}
so that, from \eqref{defPhi} and \eqref{defPk} we get, for $x \ge 1$,
\begin{equation}\label{phidec}
\Phi(x,b) = 1 + P_1(x,b) + P_2(x,b) + \cdots
\end{equation}

From now on, we choose $y \in \R$ 
\begin{equation}\label{defy}
x^{1/3} \le y \le \sqrt{x}
\qtx{ and set }
a = \pi(y).
\end{equation}
We will precise later the best choice for $y$, which is closed
to $x^{1/3}$. 

Since $y \ge x^{1/3}$ equation \eqref{defPk} yields $P_k(x,a) = 0$
for $k \ge 3$ and \eqref{phidec} becomes
\begin{equation*}
\Phi(x,a) = 1 + P_1(x,a) + P_2(x,a).
\end{equation*}
Since 
\dsm{P_1(x,a) = \sum_{p_a < p \le x} f(p)
= \sum_{y < p \le x} f(p) = \pi_f(x)-\pi_f(y)}, 
\begin{equation}\label{sfx}
\pi_f(x) = \Phi(x,a) + \pi_f(y) - 1 - P_2(x,a).
\end{equation}
Replacing $f$ by $1$ (and $\pi_f$ by $\pi$), formula \eqref{sfx} is
formula $(4)$ in \cite{DRPI}.

\subsection{Initialization of the computation:~the 2 basis tables}\label{init}

After fixing $y$, by using Eratosthenes's sieve, we precompute
the table of primes up to $y$, and the table of the values
$\pi_f(u)$ for $1 \le u \le y$.  The cost of these initializations is
$\bigo{y \log_2 y}$.

\subsection{Computation of $P_2(x,a)$}
Definition \eqref{defPk} and the complete multiplicativity
of $f$ give
\[
P_2(x,a) = \sum_{y < p \le q \le x \atop pq \le x} f(pq) 
= \sum_{y < p \le q \le x \atop pq \le x} f(p)f(q)
\]
where $p$ and $q$ are primes.
The $p's$ figuring in this sum satisfy
$p \le \dfrac{x}{q} \le \dfrac{x}{y}$ and we get
\[
P_2(x,a) = \sum_{y < p \le x/y} f(p) \sum_{p \le q \le x/p} f(q).
\]
We remark that, for $p > \sqrt{x}$, the sum on $q$ vanishes.
Since, by \eqref{defy}, $\sqrt{x} \le \dfrac{x}{y}$, we have
\[
P_2(x,a) = \sum_{y < p \le \sqrt{x}} f(p) \sum_{p \le q \le x/p} f(q)
= \sum_{y < p \le \sqrt{x}} f(p) 
\left(\pi_f\left(\frac{x}{p}\right)-\pi_f(p-1)\right)
\]
or
\begin{equation}\label{fP2}
P_2(x,a) = \sum_{y < p \le \sqrt{x}} f(p)\pi_f\left(\frac{x}{p}\right)
- \sum_{y < p \le \sqrt x} f(p) \pi_f(p-1).
\end{equation}

In the above formula, the values of $p$ are bounded above by
$\sqrt{x}$ which is larger than $y$. Thus we cannot find these primes
$p$, nor the values $\pi_f(p-1)$ in the precomputed tables (cf. \S
\ref{init}), and we generate them using a sieve of
$\intf{1}{\sqrt x}$, which we call the \emph{auxilliary sieve}.  The
values of $x/p$ lie in the interval $\intf{1}{x/y}$.  So we will get
the values $\pi_f(x/p)$ by an other sieve, the \emph{main sieve}.  Let
us note that the respective sizes of the sieve intervals, $\sqrt x$
and $x/y$, are too large to allow a sieve in one pass. Thus the two
sieves will be done by blocks of size $y$ that must be synchronized.
\medskip

\begin{itemize}
\item{\bf Initialization: Computation of $\varpi$, the largest prime
  $\le \sqrt x$ and of $\pi_f(\varpi)$.}  By Eratosthenes'sieve we
  compute the largest prime $\varpi$ not exceeding $\sqrt x$ and calculate
  $\pi_f(\varpi)$. 
  The auxilliary sieve is then initialised by putting in
  the sieve-table the primes $p$ of the block $[\sqrt x -y+1, \sqrt x]$.
  The main sieve is initialized by sieving the first block
  \dsm{[A,B] = \intf{\sqrt x}{\sqrt x + y -1}}.
  The cost of this phase is \dsm{\bigo{x^{1/2}\log_2 x}}.

\item{\bf Computing $P_2(x,a)$.}  
  We use formula \eqref{fP2}, getting in decreasing order the primes
  $p \in\, ]y,\sqrt x]$ and the $ f(p) \pi_f(p-1)$ from the auxillary sieve,
  and getting the values $\pi_f(x/p)$ from the main sieve whose
  successive blocks will cover in ascending order the
  interval \dsm{\intfg{\sqrt x}{x/y}}.

  We initialize a variable $p$ with the value $\varpi$, 
  a variable $T$ with the value $\pi_f(\varpi)$ and
  a variable $P_2$ with the value $0$.
  Then, while
  $p > y$, we repeat~:
\begin{itemize}
 \item substract $f(p)$ from $T$. Thus the new value of $T$ is
   $\pi_f(p-1)$.  \item If $x/p > B$, while $x/p > B$ we replace the
   block $[A,B]$ by the next block $[A+y, B+y]$ and we sieve
   it. When $x/p \in [A,B]$ we get $\pi_f(x/p)$ in the main sieve
   table and we add $f(p)\pi_f(x/p)- f(p) T$ to $P_2$.  
\item Using the auxilliary sieve, replace $p$ by its predecessor.
\end{itemize}
\end{itemize}
The final value of the variable $P_2$ is $P_2(x,a)$.  The first step
is negligible in cost, compared to the second.  Thus the computation
of $P_2(x,a)$ is of total cost \dsm{\bigo{\frac{x}{y}\log_2 y}}.

\subsection{Computation of $\Phi(x,a)$}
The following lemma is proved as lemma 5.1 in \cite{DRPI}. 

\begin{lem}
For every $u \ge 0,$ and for $b \ge 1$,
\begin{equation}\label{recfi1}
\Phi(u,0) = F(u)
\end{equation}
\begin{equation}\label{recfi2}
\Phi(u,b) = \Phi(u,b-1) - f(p_b)\Phi\left(\frac{u}{p_b},b-1 \right)
\end{equation}
\end{lem}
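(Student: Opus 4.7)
The identity \eqref{recfi1} is nothing more than the definition \eqref{defPhi} read at $b=0$, so the real content is the recurrence \eqref{recfi2}. My plan is to prove it by partitioning the set of summation indices of $\Phi(u,b-1)$ according to whether $p_b$ divides $n$ or not, and to exploit complete multiplicativity to factor out $f(p_b)$.

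Explicitly, the integers $n$ with $1 \le n \le u$ and $\Pm(n) > p_{b-1}$ split into those satisfying $\Pm(n) > p_b$, whose contribution is $\Phi(u,b)$ by definition, and those satisfying $\Pm(n) = p_b$. In the second class, $p_b \mid n$, so I write $n = p_b m$. The conditions $n \le u$ and $\Pm(n) = p_b$ then translate into $m \le u/p_b$ together with $\Pm(m) \ge p_b$, i.e., $\Pm(m) > p_{b-1}$. Complete multiplicativity gives $f(n) = f(p_b)\, f(m)$, so the second class contributes
\[
\sum_{\substack{1 \le m \le u/p_b \\ \Pm(m) > p_{b-1}}} f(p_b)\, f(m) \;=\; f(p_b)\,\Phi\!\left(\frac{u}{p_b},\, b-1\right).
\]
Adding the two contributions yields $\Phi(u,b-1) = \Phi(u,b) + f(p_b)\,\Phi(u/p_b, b-1)$, which is exactly \eqref{recfi2} after rearrangement.

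The only point that requires a moment of care is the boundary index $m = 1$ (equivalently $n = p_b$, occurring when $p_b \le u$): it is handled correctly by the convention $\Pm(1) = +\infty$ stated in the Notation section, which ensures that $m=1$ contributes $f(1) = 1$ to $\Phi(u/p_b, b-1)$ and that the matching term $f(p_b)$ appears in $\Phi(u,b-1) - \Phi(u,b)$. I do not anticipate any genuine obstacle here: this is the standard Meissel sieve partition, and the role of the complete multiplicativity hypothesis is solely to let $f(p_b)$ be pulled out of the inner sum so that the recurrence closes on $\Phi$ itself (rather than on a more complicated twist of $\Phi$, as would happen for a general multiplicative $f$).
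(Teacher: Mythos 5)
Your proof is correct and is the standard Meissel--Legendre sieve partition argument; the paper itself does not prove the lemma inline but defers to Lemma 5.1 of \cite{DRPI}, which uses exactly this decomposition of the indices $n$ with $\Pm(n) > p_{b-1}$ into those with $\Pm(n) > p_b$ and those with $\Pm(n) = p_b$. Your handling of the $m=1$ boundary via $\Pm(1)=+\infty$ and $f(1)=1$ is the right thing to check and is correctly addressed.
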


This relation gives an obvious method for computing
\mbox{$\Phi(x,a)$}. Starting from the tree with the only node
\mbox{$\Phi(x,a)$}, and applying repeatedly \eqref{recfi2}
we get a tree whose all nodes, except the root node,
are labelled by a formula of the form
\begin{equation}\label{gennode}
\mu(n)f(n) \Phi\left(\frac{x}{n},b\right)
\end{equation}
where $b \le a-1$
and $n=1$ or $n$ is a squarefree integer with prime factors 
$q\in \set{p_{b+1}, \dots, p_a}$.

If we repeat this expansions until all the leaves of the resulting tree
are labelled by formulas \mbox{$\mu(n)f(n) \Phi\left(\frac{x}{n},0\right)$},
using \eqref{recfi1}
we get the formula~:
\begin{equation}\label{fullrec}
\Phi(x,a) = 
\sumd{1 \le n \le x}{P^+(n) \le y} 
\mu(n)f(n) \Phi\left(\frac{x}{n},0\right) =
\sumd{1 \le n \le x}{P^+(n) \le y} 
\mu(n)f(n) F\left(\dfrac{x}{n}\right)
\end{equation}
which, when $f=1$ is formula
\dsm{\Phi(x,a) = \sumd{1 \le n \le x}{P^+(n) \le y}\
  \mu(n) \intpart{\frac{x}{n}}}
\ (cf. \cite[p. 237]{DRPI}).
\medskip

The number of terms in \eqref{fullrec} is much too large.  
In order to get a sum with fewer terms 
we replace the trivial rule
\medskip 

\centerline{{\bf Rule 1~:} \emph{ Expand  \eqref{gennode} using
  \eqref{recfi2} if $b > 0$},}

\medskip\noindent
which leads to \eqref{fullrec} by the new rule
\medskip

\centerline{{\bf Rule 2~:} \emph{ Expand node\eqref{gennode}
only if  $b > 0$ and $n \le y$.}}
\bigskip

\noindent
Expanding the computation tree whith rule 2 instead of rule 1
we get 

\begin{lem}
We have
\begin{equation}\label{}
\Phi(x,b) = S_0 + S,
\end{equation}
where $S_0$ is the contribution of \emph{ordinary leaves}
\begin{equation}\label{}
S_0 = S_0(y,x)
= \sum_{1 \le n \le y} \mu(n)f(n)\Phi\left(\frac{x}{n},0\right) 
= \sum_{1 \le n \le y} \mu(n)f(n) F\left(\dfrac{x}{n}\right)
\end{equation}
and $S$, the contribution of \emph{special leaves}, is
\begin{equation}\label{specialsum}
S = \sum_{\frac{n}{\Pm(n)} \le y < n}
\mu(n)f(n) \Phi\left(\frac{x}{n},\pi(\Pm(n))-1\right)\cdot
\end{equation}
\end{lem}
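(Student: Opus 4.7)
The plan is to compute $\Phi(x,a)$ by iterating the recursion \eqref{recfi2} under Rule 2, viewing the process as the expansion of a binary tree whose leaves fall naturally into the two groups corresponding to $S_0$ and $S$. Start from the root $\Phi(x,a) = \mu(1)f(1)\Phi(x/1,a)$, and at each step replace an interior node $\mu(n)f(n)\Phi(x/n,b)$ satisfying $b>0$ and $n \le y$ by its two children via \eqref{recfi2}: the left child $\mu(n)f(n)\Phi(x/n,b-1)$ and the right child $\mu(np_b)f(np_b)\Phi(x/(np_b),b-1)$. The right child is obtained using the identities $\mu(np_b)=-\mu(n)$ (valid because $p_b\nmid n$, the prime factors of $n$ lying in $\{p_{b+1},\dots,p_a\}$) and $f(np_b)=f(n)f(p_b)$ (complete multiplicativity of $f$). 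By induction every node in the tree is labelled by a squarefree $n$ with $\Pp(n)\le p_a$ and $\Pm(n)>p_b$. Since $b$ strictly decreases at each expansion, the tree is finite.

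Next I classify the leaves: Rule 2 halts at a node exactly when $b=0$ or $n>y$. Because $n$ is non-decreasing along any root-to-leaf path (constant at left children, multiplied by $p_b$ at right children), a leaf with $n\le y$ necessarily has $b=0$. These are the \emph{ordinary leaves}, and by \eqref{recfi1} each contributes $\mu(n)f(n)F(x/n)$. A direct check shows that every squarefree $n$ with $1\le n\le y$ is reached along exactly one path, namely the one that divides out the primes of $n$ in decreasing order of index. Since $\mu$ annihilates non-squarefree values, the ordinary contribution is
\begin{equation*}
\sum_{1\le n\le y}\mu(n)f(n)F\!\left(\frac{x}{n}\right)=S_0.
\end{equation*}

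The remaining leaves satisfy $n>y$; each such node is forced to be a right child, of a parent with label $n'=n/p_{b+1}\le y$ and $b'=b+1>0$. Since the other prime factors of $n$ lie in $\{p_{b+2},\dots,p_a\}$ and are therefore strictly greater than $p_{b+1}$, we get $\Pm(n)=p_{b+1}$, so that $n/\Pm(n)=n'\le y$ and $\pi(\Pm(n))-1=b$. Conversely, each squarefree $n$ with $\Pp(n)\le p_a$ and $n/\Pm(n)\le y<n$ is attained by exactly one path (expand the primes of $n$ from largest to smallest index, stopping just after the smallest prime factor is divided out), producing precisely the sum $S$ of \eqref{specialsum}. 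Adding the contributions of the two kinds of leaves yields $\Phi(x,a)=S_0+S$. The main delicate point is verifying this bijection between special leaves and admissible $n$; once that bookkeeping is settled, the rest is only a matter of collecting terms.
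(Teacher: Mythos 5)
Your proof is correct and reconstructs the standard tree-expansion argument behind this lemma. Note, however, that the paper itself offers no proof here: immediately after the statement it writes ``This lemma corresponds to lemma (5.2) in \cite{DRPI}'' and moves on, so there is no ``paper's proof'' to compare against beyond that citation. What you have done is fill in exactly the bookkeeping that \cite{DRPI}, Lemma 5.2 carries out: expand $\Phi(x,a)$ into a binary tree via \eqref{recfi2} under Rule 2, observe that $\mu(np_b)f(np_b)=-\mu(n)f(n)f(p_b)$ by complete multiplicativity and $p_b\nmid n$ (since by induction a node $(n,b)$ has all prime factors of $n$ in $\{p_{b+1},\dots,p_a\}$), note that $n$ is nondecreasing along root-to-leaf paths, and split the leaves accordingly. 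The identification $\pi(\Pm(n))-1=b$ for a special leaf, the uniqueness of the path realizing each admissible $n$, and the use of $\mu(n)=0$ to lift the squarefree restriction in $S_0$ are all handled correctly; your explicit mention of the condition $\Pp(n)\le p_a$ on special leaves is in fact more careful than the lemma's displayed sum \eqref{specialsum}, where that restriction is left implicit. One small wording quibble: in describing the path realizing a special leaf you write that the primes of $n$ are ``divided out,'' but in the tree construction one \emph{multiplies} $p_b$ into the label $n$ (equivalently, divides the argument $x/n$ of $\Phi$); the idea is clear, but the phrasing inverts the direction.
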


This lemma corresponds to lemma (5.2) in \cite{DRPI}.

\subsubsection{Computation of $S_0$}
In the general case, the computation of $S_0$ is done with a cost
$\bigo{x^{2/3}/\log^2 x}$ thanks to the condition
2 in proposition \ref{piprop}. 

In the case we will consider later in this work,
the computation of $\pi_{id}(x)$,
$f(n) = n$, thus $F(u) = \dfrac{[u][u+1]}{2}$ is computed
in $\bigo{1}$ time and the computation of $S_0(x,y)$ 
is of cost \dsm{\bigo{y} = \smallo{x^{2/3}/\log^2 x}}.

\subsubsection{Computation of $S$}

In the sum \eqref{specialsum}, 
let us set $n = mp$ with $p = \Pm(n)$.
Grouping together all the $n'$s according to the value of $p$,
we get
\begin{equation}\label{specialbis}
S =
-\sum_{p \le y} f(p)\sum_{\Pm(m)> p \atop m \le y < mp}
\mu(m)f(m) \Phi\left(\frac{x}{mp},\pi(p)-1\right)\cdot
\end{equation}

The computation of $S$ from \eqref{specialbis} is the complicate part
of the algorithm.
In the following paragraph we show that it is relatively
simple to get a cost $\bigo{x^{2/3+\veps}}$.

\subsubsection{How to compute $S$ in \dsm{\bigo{x^{2/3+\veps}}}}\label{pifcomp}

In this section, we explain a first method to get $\pi_f(x)$, 
rather simple to implement, and whose running time 
is $\bigo{x^{2/3+\veps}}$.
We take $y=x^{1/3}$.
Since $mp > y$ all the values $u = x/mp$ appearing in
\eqref{specialbis} are less than $x^{2/3}$. We sieve the
interval \dsm{\intfg{1}{\frac{x}{y}}} successively by all primes $p
\le y$. After the sieve by $p$, from
the definition \eqref{defPhi} of $\Phi$, for all the $m$'s such that
$m
\le y < mp$, we get in the sieve table the value
\dsm{\Phi\left(\frac{x}{mp},\pi(p)-1\right)},
and we add to $S$ the value \dsm{f(p)\mu(m)f(m)
  \Phi\left(\frac{x}{mp},\pi(p)-1\right)\cdot}

But, if we proceed in the naive way, after sieving by each $p$, we
will update the sieve table, putting in the case of index $u$ the
sum of $f(n)$ for the $n$'s,  $n \le u$ that are still in the table. This is
excluded because, for each $p$ this would cost $\bigo{x/y}$
operations, and the total cost of these updatings would be $\gg
\pi(y)(x/y) = x/\log x$.  As explained in \cite{LMO} (the 7 last-lines
p. 545 and the first half of p. 546) we use an auxiliary data
structure such that, for a price of $\bigo{\log x}$ time in place of
$\bigo{1}$ for each access, we don't need to update the sieve table
after each sieve.  To be a little more precise let us say that this
structure is a labelled binary tree. There is a leave for each index
$i$ of the table sieve, this leave is labelled by the value $f(i)$,
and each interior node is labelled by the sum of labels of its two
sons.  Proceeding in this way the cost of the sieve
is \dsm{\bigo{\frac{x}{y}\log x\log_2 x}}, while the cost of
retrieving the
values \dsm{f(p)\mu(m)f(m) \Phi\left(\frac{x}{mp},\pi(p)-1\right)}
is \dsm{\bigo{\pi(y)y \log x}}. Both costs
are \dsm{\bigo{x^{2/3+\veps}}} with our choice $y=x^{1/3}$.

\subsubsection{Faster computation of $S$}
In this section, we explain how to carry out the computation
of $\pi_{f}(x)$ in \dsm{\bigo{\frac{x^{2/3}}{{\log^2x}}}}.
We take $y = x^{1/3}(\log x)^3\log_2 x$.
To speed up the computation of $S$ we partition
\eqref{specialbis}
in 3 subsums $ S = S_1 + S_2 + S_3,$

\begin{eqnarray*}
S_1 &=&
-\sum_{x^{\frac{1}{3}} < p \le y} f(p) \sum_{\Pm(m)> p \atop m \le y < mp}
\mu(m)f(m) \Phi\left(\frac{x}{mp},\pi(p)-1\right)\\
S_2 &=&
-\sum_{x^{\frac{1}{4}} < p \le x^{\frac{1}{3}}}f(p) \sum_{\Pm(m)> p \atop m \le y < mp}
\mu(m)f(m) \Phi\left(\frac{x}{mp},\pi(p)-1\right)\\
\end{eqnarray*}
\begin{eqnarray*}
S_3 &=&
-\sum_{p \le x^{\frac{1}{4}}} 
f(p) \sum_{\Pm(m)> p \atop m \le y < mp} \mu(m)f(m) \Phi\left(\frac{x}{mp},\pi(p)-1\right)\\
\end{eqnarray*}

We will show that $S_1$ is quickly computed in $\bigo{y}$ time.  $S_3$
will be computed by sieve, as explained in \S\ref{pifcomp}, but
faster because the number of values for $p$ is reduced from $\pi(y)$
to $\pi(x^{1/4})$.  The main part of the computation will be the
computation of $S_2$.

As in \cite{DRPI}, we first observe that the $m'$s involved in $S_1$
and $S_2$ are all prime and therefore~:

\begin{eqnarray}
\label{S1}
S_1 &=& 
\sum_{x^{\frac{1}{3}} < p \le y}\ f(p) \sum_{p < q \le
  y}f(q)\Phi\left(\frac{x}{pq},\pi(p)-1\right)\\
\label{S2}
S_2 &=& 
\sum_{x^{\frac{1}{4}} < p \le x^{\frac{1}{3}}}f(p) \ \sum_{p < q \le y}f(q)\Phi\left(\frac{x}{pq},\pi(p)-1\right)
\end{eqnarray}

\paragraph{Computing $S_1$}
As in \cite{DRPI} we remark that, in \eqref{S1}, we have
\dsm{\frac{x}{pq} < x^{1/3} < p}. Thus, all the values
\dsm{\Phi\left(\frac{x}{pq},\pi(p)-1\right)} are equal to $1$.
Therefore
\begin{equation*}
S_1 = \sum_{x^{\frac{1}{3}} < p \le y} f(p) \sum_{p < q \le y}f(q)
= \sum_{x^{\frac{1}{3}} < p \le y} f(p) \left(\pi_f(y)-\pi_f(p)\right).
\end{equation*}
This value is computed in $\bigo{y}$ additions, using
the precomputed table of the values $\pi_f(u)$ for $1 \le u \le y$.

\paragraph{Computing $S_3$}

For each $p \le x^{1/4}$ we precompute the list of all
the squarefree $m \le y$ whose least factor is $p$.

We sieve the interval $\intf{1}{\frac{x}{y}}$ successively by all the
primes up to $x^{1/4}$. As soon as we have sieved by $p$, using the
precomputded lists of squarefree whose least prime factor is a prime
$q >p$ we sum the
\[
f(p) \sum_{\Pm(m)> p \atop m \le y < mp} \mu(m)f(m)
\Phi\left(\frac{x}{mp},\pi(p)-1\right)
\]
for all squarefree $m \in \intfg{y/p}{y}$
such that $\Pm(m) > p$. This computation is done by blocks,
using the auxiliary structure, as explained at the end of 
\S~\ref{pifcomp}. 

Thus the cost of sieving is \dsm{\bigo{\frac{x}{y}\log x \log_2 x}}.
The number of values of $p$ is $\pi\big(x^{1/4}\big)$ and the number
of values of $m$ is less than $y$, 
thus the cost of retrieving the values
\dsm{f(p)\mu(m)f(m)
  \Phi\left(\frac{x}{mp},\pi(p)-1\right)}
is \dsm{\bigo{\pi\big(x^{1/4}\big) \times \log x \times y}}
Thus computing $S_3$ is of cost 
\dsm{\bigo{\frac{x}{y}\log x \log_2 x + y x^{1/4}}}

\paragraph{Computing $S_2$}
We split the sum \eqref{S2} in two parts depending on 
$q > x/p^2$ or $q \le x/p^2$. It gives
\[
S_2 = U + V
\]
with
\[
U = \sum_{x^{\frac{1}{4}} < p \le x^{\frac{1}{3}}} f(p)\
\sum_{p < q \le y \atop q > x/p^2}\ f(q)\Phi\left(\frac{x}{pq},\pi(p-1\right)
\]
and
\[
V = \sum_{x^{\frac{1}{4}} < p \le x^{\frac{1}{3}}} f(p)\ \sum_{p < q \le y \atop q \le
  x/p^2}\
\ f(q)\Phi\left(\frac{x}{pq},\pi(p-1\right)
\]

\paragraph{Computing  U} 
With $y < \sqrt x$ (cf. \eqref{defy}),
the condition $q > x/p^2$ implies
$p^2 > x/q \ge x/y \ge x^{1/2}$. Thus,
\[
U = \sum_{\sqrt{x/y} < p \le x^{1/3}}f(p)\ \sum_{p < q \le y \atop q >
  x/p^2}\
\ f(q)\Phi\left(\frac{x}{pq},\pi(p-1\right)
\]
From $x/p^2 < q$ we deduce $x/pq < p$ and $\Phi(x/pq,\pi(p)-1)=1$, and
we have $x/p^2 \ge p$ so that
\[
U = \sum_{\sqrt{x/y} < p \le x^{1/3}}f(p)\ \sum_{p < q \le y \atop q >
  x/p^2} \ f(q) = 
\sum_{\sqrt{x/y} < p \le x^{1/3}}f(p)\
\left(\pi_f(y)-\pi_f\left(\frac{x}{p^2} \right)\right)\cdot
\]
Since $x/p^2 < q \le y$ the sum $U$ is calculated in $\bigo{y}$ 
operations with the table of values of $\pi_f(u)$.

\paragraph{Computing $V$}

For each term involved in $V$ we have
\dsm{
p \le \frac{x}{pq} < x^{1/2} < p^2.
}
Hence, by \eqref{defPhi}, \dsm{\Phi(x/(pq),\pi(p)-1)} is the sum
of $f(n)$ for $n$ satisfying $n \le x/pq$ and $\Pm(n) \ge p$. These
$n$'s are $n=1$ and all the primes $n$ satisfying
$p-1 < n \le x/(pq)$. Thus
\[
\Phi\left(\frac{x}{pq},\pi(p)-1 \right)
= 1 + \pi_f\left(\frac{x}{pq} \right) -\pi_f(p-1)
\]
And we write
\[
V = V_1 + V_2
\]
with
\begin{eqnarray*}
V_1 &=& \sum_{x^{1/4} \le p < x^{1/3}} f(p) \sum_{p < q \le \min(\frac{x}{p^2},y)}
f(q)(1- \pi_f(p-1))\\
V_2 &=& \sum_{x^{1/4} \le p < x^{1/3}} f(p)\
\sum_{p < q \le\min(\frac{x}{p^2},y)}
f(q) \pi_f\left(\frac{x}{pq}\right)
\end{eqnarray*}
Computing $V_1$ can be achieved in $\bigo{y}$ time
once we have tabulated $\pi_f(u)$ for $u \le y$.

\paragraph{Computing $V_2$.}

We first split $V_2$ in two parts in order to
simplify the condition $q \le \min(x/p^2,y)$~:
\[
V_2 =
\sum_{x^{1/4} < p \le \sqrt{\frac{x}{y}}}f(p)\
\sum_{p < q \le y} f(q) \pi_f\left(\frac{x}{pq} \right)
+
\sum_{\sqrt{\frac{x}{y}} < p < x^{1/3}}f(p)\
\sum_{p < q \le \frac{x}{p^2}} f(q) \pi_f\left(\frac{x}{pq} \right)
\]
In the purpose to speed up the computation of the above two sums
we now write,
\dsm{
V_2 = W_1 + W_2 + W_3 + W_4 + W_5
}
with
\begin{eqnarray*}
W_1 &=& 
\sum_{x^{1/4} < p \le \frac{x}{y^2}}f(p)\ \sum_{p < q \le y}\
  f(q)\, \pi_f\left(\frac{x}{pq} \right)\\
W_2 &=& 
\sum_{\frac{x}{y^2} < p \le \sqrt{\frac{x}{y}}} f(p)\ 
\sum_{p < q \le \sqrt{\frac{x}{p}}} 
  f(q)\, \pi_f\left(\frac{x}{pq} \right)\\
W_3 &=& 
\sum_{\frac{x}{y^2} < p \le \sqrt{\frac{x}{y}}}f(p)\ 
\sum_{\sqrt{\frac{x}{p}} < q \le  y} \
  f(q)\, \pi_f\left(\frac{x}{pq} \right)\\
W_4 &=& 
\sum_{\sqrt{\frac{x}{y}} < p \le x^{1/3}}f(p)\ 
\sum_{p  < q \le \sqrt{\frac{x}{p}}} \
  f(q)\, \pi_f\left(\frac{x}{pq} \right)\\
W_5 &=& 
\sum_{\sqrt{\frac{x}{y}} < p \le x^{1/3}}f(p)\ 
\sum_{\sqrt{\frac{x}{p}} <  q \le \frac{x}{p^2}}\
  f(q)\, \pi_f\left(\frac{x}{pq} \right)\\
\end{eqnarray*}

\paragraph{Computing $W_1$ and $W_2$} These two quantities need values
of $\pi_f(x/pq)$ with $x^{1/3} < x/pq < x^{1/2}$. These are computed
with a sieve of the interval $\intf 1 {\sqrt x}$.
The sieving is done by blocks, and, for each block, we sum
$f(p)f(q)\pi_f(x/pq)$ for the pairs $(p,q)$ satisfying the
conditions of the sum $W_1$ or $W_2$ and such that $x/pq$ lies
in the block.

The cost of this computation is the sum of three terms~:
\medskip

\begin{itemize}
\item
The cost of the above sieve on \dsm{\intf{1}{\sqrt x}}
is \dsm{\bigo{\sqrt x \log_2 x}}.
\item
The cost of adding the terms of the sum $W_1$,\ 
\dsm{\bigo{\frac{x}{y\log^2 x}}}.
\item
The cost of adding the terms of the sum $W_2$,
\dsm{\bigo{\frac{x^{3/4}}{y^{1/4}\log^2 x}}}.
\end{itemize}

\paragraph{Computing $W_3$ and $W_5$}
For $W_3$, for each $p$ we apply lemma \ref{speedup} with $z=x/p$ and
$u=y$. Thus, for each value of $p$, the sum on $q$ costs
$\bigo{\pi(\sqrt{x/p})}$, and the total cost of the computation of
$W_3$ is
\begin{equation*}
\bigo{\sum_{\frac{x}{y^2} <
p \le \frac{x}{y}} \pi\left(\sqrt{\frac{x}{p}}\right)}.
\end{equation*}
For $W_5$, for each $p$ we apply lemma \ref{speedup} with $z=x/p$ and
$u=x/p^2$. Thus, for each value of $p$, the sum on $q$ costs
$\bigo{\pi(\sqrt{x/p})}$, and the total cost of the computation of
$W_5$ is
\begin{equation*}
\bigo{\sum_{\sqrt{\frac{x}{y}} < p \le x^{1/3}} \pi\left(\sqrt{\frac{x}{p}}\right)}.
\end{equation*}
Thus the costs of computing $W_3$ and $W_5$ add to
\begin{eqnarray*}\label{w3}\nonumber
\bigo{\sum_{\frac{x}{y^2} < p \le
x^{1/3}} \pi\left(\sqrt{\frac{x}{p}}\right)}.
&=& \bigo{\sum_{\frac{x}{y^2} < p \le
x^{1/3}} \left(\frac{\sqrt{\frac{x}{p}}}{\log{\frac{x}{p}}}\right)}
=
\bigo{\frac{x^{2/3}}{\log^2 x}}\cdot 
\end{eqnarray*}

\paragraph{Computing $W_4$}
We simply sum over $(p,q)$. There would be no advantage to proceed
as for $W_3$ since most of the values $\pi_f(x/pq)$ are distinct.
The cost is
\[
\bigo{\sum_{\sqrt{\frac{x}{y}} < p \le
x^{1/3}} \pi\left(\sqrt{\frac{x}{p}}\right)}
= \bigo{\frac{x^{2/3}}{\log^2 x}}\cdot
\]
\medskip

As in \cite{DRPI}, section 8, we then see that, since $y
=x^{1/3}\log^3 x\log_2 x$, the total cost of the computation of
$\pi_f(x)$ is $\bigo{x^{2/3}/\log^2x}$.

\section{The algorithm to calculate $h(n)$}\label{paralgoh}

\subsection{The function $G(p_k,m)$}\label{parintrG}

The function $G(p_k,\!m)$ has been introduced and studied in \cite{DNZ}.

\begin{definition}
Let $p_k$ be the $k$-th prime, for some $k\ge 3$ and $m$ an
integer satisfying $0 \le m \le p_{k+1}-3$.
We define
\begin{equation}\label{G}
G(p_k,m)=\max \frac{Q_1Q_2\ldots Q_s}{q_1q_2\ldots q_s}
\end{equation}
where the maximum is taken over the primes 
$Q_1,Q_2,\ldots,Q_s,q_1,q_2,\ldots,q_s$ ($s\ge 0$) satisfying 
\begin{equation}\label{qsQ}
3\le q_s < q_{s-1} < \ldots < q_1 \le p_k < p_{k+1} \le Q_1 < Q_2 < 
\ldots < Q_s
\end{equation}
and
\begin{equation}\label{Qimqi}
\sum_{i=1}^s (Q_i-q_i)\le m.
\end{equation}
\end{definition} 

The additive function $\ell$ (cf. \S\ref{parh}) can easily be extended
to fractions by setting
\[
\ell(M/N)=\ell(M)-\ell(N)
\]
when $M$ and $N$ are coprime or are both squarefree.
Therefore, the inequality \eqref{Qimqi}  implies
\begin{equation}\label{lGm}
\ell(G(p_k,m))\le m.
\end{equation}

\subsubsection{Properties of $G(p_k,m)$}\label{subG}

Obviously, $G(p_k,m)$ is non-decreasing on $m$ and $G(p_k,2m+1)=G(p_k,2m)$.
The maximum in \eqref{G} is unique (from the unicity of the standard
factorization into primes). 
For small $m$'s, we have
\begin{equation}\label{Gpkm1}
0\le m < p_{k+1}-p_k\quad \LR \quad G(p_k,m)=1. 
\end{equation}
From Proposition 8 of \cite{DNZ}, we have
\begin{equation}\label{Gineq}
\frac{p_{k+1}}{(p_{k+1}-m)^\star}\le G(p_k,m)\le \frac{p_{k+1}}{p_{k+1}-m}.
\end{equation}
Note that if $p_{k+1}-m$ is prime, then \eqref{Gineq} yields the
exact value of $G(p_k,m)$.

\subsubsection{Computation of $G(p_k,m)$ }\label{Glarge}

In \cite[\S 8]{DNZ}, two algorithms are given to calculate $G(p_k,m)$. 

The first one is a combinatorial algorithm. In its first step, the
primes allowed to divide the denominator of $G(p_k,m)$ are
determined. From \eqref{qsQ} and \eqref{Qimqi}, they are all the
primes in the range \mbox{$[(p_{k+1}-m),\, p_k]$}, say $P_1 < P_2 < \ldots
< P_K$. Similarly, the primes authorized to divide the numerator are
all the primes 
$P_{K+1} <  P_{K+2} < \ldots < P_R$ in 
\mbox{$[p_{k+1},\, p_k+m]$}.  By setting $\cp'= \{P_1,P_2,\ldots,P_R\}$,
from the definition \eqref{hjP'}, we get
\[
G(p_k,m)=\frac{1}{P_1 P_2 \ldots P_K} h_K(P_1+P_2+\ldots
+P_K+m,\cp')
\]
and $h_j(n,\cp')$ can be computed by induction on $j$ in a way similar
to that exposed in \S \ref{parcomph}. In \cite[\S 8]{DNZ}, one can find the
details and also some tricks to improve the running time of this
combinatorial algorithm which, however, remains rather slow when $m$
is large.

The second algorithm, which is more sophisticated,
is based on the following remark~ :
if 
$G(p_k,m)=\dfrac{Q_1Q_2\ldots Q_s}{q_1q_2\ldots q_s}$ and $m$ is large,
the least prime factor $q_s$ of the denominator is close to $p_{k+1}-m$
while all the other primes $Q_1,\ldots, Q_s,q_1,\ldots,$ $q_{s-1}$ 
are close to $p_k$. 

More precisely,
the following proposition (which is Proposition 10 of \cite{DNZ}) 
says that if 
$p_{k+1}-m + \delta$ is prime for some small $\delta$
and if $G(p_{k+1},\delta)$ is not too small, 
then the computation of $G(p_{k},m)$ is reduced to the computation
of $G(p_{k+1},m')$ for few small values of $m'$, which can be done by
the above combinatorial algorithm.

\begin{prop}\label{Gnew}
We want to compute $G(p_k,m)$ as defined in \eqref{G} 
with $p_k$ odd and $p_{k+1}-p_k\le m \le p_{k+1}-3$. We 
assume that we know some even non-negative integer $\de$ satisfying
\begin{equation}\label{dp}
p_{k+1}-m+\de\quad\text{ is prime, }
\end{equation}
\begin{equation}\label{di}
G(p_{k+1},\de) \ge 1+ \frac{\de}{p_{k+1}}
\end{equation}
and
\begin{equation}\label{dm}
\de < \frac{2m}{9} < \frac{2p_{k+1}}{9}\cdot
\end{equation}
If $\de=0$, we know from \eqref{Gineq} that 
$G(p_k,m)=\dfrac{p_{k+1}}{p_{k+1}-m}\cdot$ If $\de > 0$, we have
\begin{equation}\label{Gq^}
G(p_k,m)=\max_{\substack{q\;\text{ prime}\\p_{k+1}-m\;\le \;q\;\le \;\qh}} \;\;
\frac{p_{k+1}}{q} \;\,G(p_{k+1},m-p_{k+1}+q),
\end{equation}
where $\qh$ is defined by
\begin{equation}\label{qdef}
\qh=\frac{p_{k+1}p_{k+2}(p_{k+1}-m+\de)}{(p_{k+1}+\de)(p_{k+1}-3\de/2)}
\le p_{k+2}-m+\frac{3\de}{2}\cdot
\end{equation}
\end{prop}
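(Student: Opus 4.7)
The plan is to establish a recursion expressing $G(p_k,m)$ in terms of values of $G(p_{k+1},\cdot)$, and then to restrict the range of the max using hypotheses \eqref{dp}--\eqref{dm}. The $\de=0$ case is already covered by the right-hand inequality in \eqref{Gineq} once we notice that, under \eqref{dp}, $p_{k+1}-m$ itself is prime. So assume $\de > 0$.

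First I would argue that in any optimal configuration $M/N = Q_1\cdots Q_s/(q_1\cdots q_s)$ realizing $G(p_k,m)$, the prime $p_{k+1}$ divides $M$. Indeed, if not, then replacing $N$ by $N\cdot q$ for some $q_s$-successor candidate and $M$ by $M\cdot p_{k+1}$ (absorbing the extra cost $p_{k+1}-q$ into the budget $m$) would use hypothesis \eqref{di}: the leftover freedom $\de$ is precisely what \eqref{di} says can be cashed in as a gain of factor $1+\de/p_{k+1}$, which exceeds $1$. Granting this, write $q=q_s$ for the smallest denominator prime and factor $M/N = (p_{k+1}/q)\cdot (M/p_{k+1})/(N/q)$. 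The second factor is an admissible ratio for $G(p_{k+1}, m')$ with $m' = m - (p_{k+1}-q) = m - p_{k+1}+q$ (all surviving denominator primes are $\le p_k < p_{k+1}$ and all surviving numerator primes are $\ge p_{k+2}$). Conversely, multiplying any competitor for $G(p_{k+1},m')$ by $p_{k+1}/q$ produces a competitor for $G(p_k,m)$. Hence
\[
G(p_k,m) \;=\; \max_{q \text{ prime},\; p_{k+1}-m\le q\le p_k}\; \frac{p_{k+1}}{q}\,G(p_{k+1},m-p_{k+1}+q),
\]
the lower limit being forced by the requirement $m'\ge 0$.

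Next I would exhibit a lower bound on $G(p_k,m)$ by evaluating the right-hand side at the distinguished value $q_0 := p_{k+1}-m+\de$, which is prime by \eqref{dp} and lies in the admissible range thanks to \eqref{dm}. At $q_0$ we have $m'=\de$, so \eqref{di} gives
\[
\frac{p_{k+1}}{q_0}\,G(p_{k+1},\de) \;\ge\; \frac{p_{k+1}}{p_{k+1}-m+\de}\cdot\frac{p_{k+1}+\de}{p_{k+1}} \;=\; \frac{p_{k+1}+\de}{p_{k+1}-m+\de}.
\]
It remains to show that any prime $q>\qh$ is beaten by $q_0$, which will imply that the max in \eqref{Gq^} ranges only over $q\le\qh$. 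Using the right-hand inequality of \eqref{Gineq} for $G(p_{k+1},m')$, the summand at $q$ is bounded above by $p_{k+1}p_{k+2}/[q(p_{k+2}+p_{k+1}-m-q)]$. Setting this upper bound $<(p_{k+1}+\de)/(p_{k+1}-m+\de)$ and rearranging yields a quadratic inequality in $q$; replacing the second factor $p_{k+2}+p_{k+1}-m-q$ by the weaker bound $p_{k+1}-3\de/2$ (justified by $q\ge p_{k+1}-m+\de$ combined with $p_{k+2}-p_{k+1}$ small relative to $m$, and using \eqref{dm} to absorb the slack) linearizes the condition and produces exactly the threshold $\qh$ of \eqref{qdef}. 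The secondary bound $\qh\le p_{k+2}-m+3\de/2$ in \eqref{qdef} then follows by a short algebraic manipulation expanding $p_{k+2}=p_{k+1}+(p_{k+2}-p_{k+1})$ and again invoking \eqref{dm}.

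The main obstacle is the last step: the precise choice of the substitution that turns the quadratic comparison between the summand at $q$ and the lower bound at $q_0$ into the linear threshold $\qh$. One has to be careful that the bookkeeping with \eqref{dm} is tight enough that the replacement $p_{k+2}+p_{k+1}-m-q \mapsto p_{k+1}-3\de/2$ does not throw away too much, and that the resulting inequality is actually strict for $q>\qh$, so that no prime exceeding $\qh$ can tie or beat $q_0$.
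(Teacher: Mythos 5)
The paper does not actually prove Proposition \ref{Gnew}: it quotes it as ``Proposition~10 of \cite{DNZ}'' and uses it without reproving it, so there is no in-paper proof to compare against. Judged on its own, however, your attempt has a genuine error at its very first step.

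Your argument rests on the claim that in an optimal configuration $M/N$ realizing $G(p_k,m)$ the prime $p_{k+1}$ must divide $M$; you then peel off the factor $p_{k+1}/q$. This claim is false. Take $p_k=23$, $p_{k+1}=29$, $m=20$ (so the hypotheses hold with $\delta=2$, since $9+\delta=11$ is prime, $G(31,2)=31/29 = 1+\delta/29$, and $2<40/9$). One checks directly that $G(23,20)=31/11$, an admissible ratio with cost $31-11=20$, and that no competitor beats it; here $29\nmid 31$. The recursion \eqref{Gq^} is nevertheless consistent with this, because $31/11=(29/11)\cdot(31/29)$ and $31/29=G(29,2)$: the factor $p_{k+1}/q$ in \eqref{Gq^} need not appear in the optimal configuration but may instead arise after inserting $p_{k+1}$ in both numerator and denominator, so that it cancels against a $p_{k+1}$ in the denominator of the $G(p_{k+1},m')$ configuration. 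A correct derivation of the ``$\le$'' direction should therefore factor the optimal $M/N$ as $(p_{k+1}/q)\cdot M/(N'\,p_{k+1})$, where $q$ is a denominator prime of $M/N$ and $N=qN'$, and observe that $M/(N'p_{k+1})$ is admissible for $G(p_{k+1},m-p_{k+1}+q)$; the ``$\ge$'' direction in turn requires checking that $q$ is not among the denominator primes of $G(p_{k+1},m')$, a coprimality condition you never address and which genuinely constrains the admissible $q$.

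The second gap is the linearization of the quadratic comparison. Writing $A=p_{k+1}+p_{k+2}-m$, the condition to discard a prime $q$ is $q(A-q)\ge B$ with $B=p_{k+1}p_{k+2}(p_{k+1}-m+\delta)/(p_{k+1}+\delta)$. Replacing $A-q$ by $p_{k+1}-3\delta/2$ is only valid for $q\le p_{k+2}-m+3\delta/2$, i.e.\ precisely the regime you are trying to discard, so your substitution goes the wrong way; in any case $q\mapsto q(A-q)$ is not monotone on $[p_{k+1}-m,p_k]$, and nothing in your sketch shows that the inequality persists for all $q$ between $\qh$ and $p_k$ (in particular near the vertex $q=A/2$ or at $q=p_k$). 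The inequality $\qh\le p_{k+2}-m+3\delta/2$ in \eqref{qdef} is likewise asserted without even the outline of a calculation, yet it requires using \eqref{dm} in a specific way.

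In short: the factorization identity $G(p_k,m)=\max_q (p_{k+1}/q)\,G(p_{k+1},m-p_{k+1}+q)$ is the right starting point, but your justification of it via ``$p_{k+1}\mid M$'' is incorrect, and the argument truncating the range of $q$ to $q\le\qh$ is not carried far enough to be checkable.
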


How to compute $G(p_k,m)~\!?$\ The combinatorial algorithm should
be tried if $m$ is small, but it is quadratic in $m$ and
has no chance to terminate if $m$ is larger than, say, $10^6$.
We have no guarantee that the conditions of Prop. \ref{Gnew}
are satisfied. However in all our numerical applications,
we have found $\delta < 1000$ in \eqref{dp} (see \cite[\S 9.2]{DNZ}),
so that, by \eqref{Gq^} and \eqref{qdef}, we have
\[
m-p_{k+1}+q \le m - p_{k+1}+\qh \le p_{k+2}-p_{k+1}+\frac{3\delta}{2}
\]
and, in \eqref{Gq^}, $G(p_{k+1,}m-p_{k+1}+q)$ can
be easily calculated by the combinatorial algorithm.

\subsection{Description of the algorithm to compute $h(n)$}\label{algoh}

To compute $h(n)$, the first step is to determine
$p_k$ and $\sigma_k$ defined by \eqref{k}. This step
is explained in \S\, \ref{parSk} and will furnish
also $p_{k+1}$ and $n'=n-p_k$.

\subsubsection{Computation of $p_k$ and $\sigma_k$}\label{parSk}
\begin{enumerate}
\item
Compute $x = \sqrt{\Li^{-1}(n)}$, so that $\Li(x^2)= n$
and $x \sim \sqrt{n \log n}$.

\item
Using Prop. \ref{piprop}, we compute $\pi_{id}(x)$ in time\\
\dsm{\bigo{x^{2/3}/\log^2 x}} = \dsm{\bigo{n^{1/3}/(\log n)^{-5/3}}}.

\item
To get $\sigma_k$, we have to add (if $\pi_{id}(x) < n$) or to
subtract (if $\pi_{id}(x) > n$) to $\pi_{id}(x)$ the primes between
$x$ and $p_k$, calculated by sieving. In practice, this step is very
short. But we are able to estimate it only under Riemann's
hypothesis.  By lemma \ref{approxpi1}, we have
\[
\Li(p_k^2) - \frac{5}{24\pi}p_k^{3/2}\log p_k < \sigma_k
\le n < \sigma_{k+1} 
< \Li(p_{k+1}^2)  + \frac{5}{24\pi}p_{k+1}^{3/2}\log p_{k+1}
\] 
which implies
\[
n = \Li(p_k^2) + \bigo{p_k^{3/2}\log p_k}
\sim \Li(p_k^2) \sim \frac{p_k^2}{2\log p_k}\cdot
\]
Therefore, we get $\log n \sim 2\log p_k$,
$p_k \sim \sqrt{n \log n}$ and
\begin{equation}\label{ZUT}
\Li(p_k^2) =  n + \bigo{n^{3/4}(\log n)^{7/4}}.
\end{equation}
Further, since $x \sim p_k \sim\sqrt{n \log n}$, we have
\[
\abs{n - \Li(p_k^2)} = \abs{\Li(x^2) - \Li(p_k^2)}
\sim \frac{\abs{x^2-p_k^2}}{2\log x} 
\sim 2\abs{x-p_k}\sqrt{\frac{n}{\log n}},
\]
so that, from \eqref{ZUT},
\dsm{
\abs{\pi(x)-\pi(p_k)} \le \abs{x-p_k} = \bigo{n^{1/4}(\log n)^{9/4}}.
}
\end{enumerate}
\suppress{
By the defintion \eqref{k}, $n$ is near from
$\pi_{id}(p_k)$. Therefore, by the equation \eqref{equapprox} of lemma
\ref{approxpi1}, $p_k$ is near from $\sqrt{\Li^{-1}(n)} \sim \sqrt{n
  \log n}$.  
The simplest way to compute $\sigma_k$ is~: Add all the successive
primes until the sum exceeds $n$. The generation of all the primes up
to $p_k$ by Eratosthene's sieve is of cost $\bigo{\sqrt{n\log n}}$,
which is too expansieve for large values of $n$.
\medskip

Thus, to  compute $p_k$ and $\sigma_k$, we
\begin{enumerate}
\item
Compute $x_k=\sqrt{\Li^{-1}(n)}$, then
$\Li(x_k^2) = n$, $x_k \sim \sqrt{n\log n}$ and by \eqref{equapprox},
\[
\abs{\pi_{id}(x_k) - n} \le \frac{5}{24\pi} x_k^{3/2} \log x_k
\sim \frac{5}{48\pi} n^{3/4} (\log n)^{7/4}. 
\]
\item
Using proposition \ref{piprop} we compute $\pi_{id}(x_k)$ in time
$\bigo{x^{2/3}/\log^2 x_k} = \bigo{n^{1/3} (\log n)^{-5/3}}$.
\item
The values of primes about $x_k$ are nearly $n^{1/2}(\log n)^{1/2}$.
Thus the number of primes to add or subtract to $pi_{id}(x_k)$
for finding the value $\sigma_k$ is about
$\bigo{n^{1/4}}{(\log n)^{5/4}}$.  
According to $\pi_{id} x_k < n$ or $\pi_{id}(x_k) > n$ we sieve a small
interval begining by $x_k$ or ending by $x_k$ to find the exact
values of $p_k$ and $\sigma_k$.
\end{enumerate}
}

\subsubsection{Computation of $h(n)/N_k$}

By Corollary \ref{coroh=hk}, we have $h(n)=h_k(n)$. Let us
set $n' = n-\sigma_k$. If $n'=p_{k+1}-1$
or $n'=p_{k+1}-2$, Proposition \ref{propparphe3} yields
$h(n)=N_{k+1}/2$. So, we may suppose  $n'\leq \sigma_{k+1}-3$. 
From the definition \eqref{G} of function $G$, we have
\begin{equation}\label{h=NkG}
h(n)=h_k(n)=N_k G(p_k,n')
\end{equation}
and we compute $G(p_k,n')$ as explained in \S \ref{Glarge}.  In
practice, the computation of $G(p_k,n)$ is very fast.  However, as
explained in \S \ref{Glarge}, we have no estimation of the running
time.  

Below, are listed some values of $\dfrac{h(n)}{N_k}=G(p_k,n')$
together with $p_k$, $n'=n-\sigma_k$, $e=e(n)$ the largest integer
such that $h(n-e)=h(n)$) and, if the algorithm of Proposition
\ref{Gnew} is used, $\delta$ and $Q$, the number of primes used in the
sum \eqref{Gq^}.
\[
n=10^{12},\  p_k=5477081,\  n'=4935150,\ e=0,\ \de= 18,Q=1,
\]
\[
G(p_k,n')=  \frac{29998525822277}{2968309525031} =
\frac{5477089\times 5477093}{5477081\times 541951}\cdot
\]  
\[
n=10^{35},\  p_k=\ 2898434150644708999,\ n'=\ 1886081812111845520,\ e
= 16
\ 
\]
\[
\de=134,\ Q= 5,
\quad G(p_k,n')= \frac{2898434150644709023}{1012352338532863519}\cdot
\]

The values  of $h(10^{a})$ for $a \le 35$ and of
$h = 2^b$ for $b \le 116$ can be found on the 
authors's web sites \cite{DEL.WEB, NIC.WEB}, together
with the Maple or Sage programs computing
$h(n)/N_k$.

\section{An open question}\label{paropq}

Given $n$ and $j < k(n)$, how to compute $h_j(n)$? We have not
succeeded in solving this problem when $n$ is too large to use the
naive algorithm described in \ref{parcomph}. The case $j=2$ is already
not that simple.

A first step is certainly to calculate $r=r(n,j)$ defined by
\eqref{encn}, which can be done by the method of \S\, \ref{parSk}. If we
are lucky enough that $q=p_{j+r+1}-n'=p_{j+r+1}-(n-\sigma_{j+r}+\sigma_r)$ is
prime, then the value  $h_j(n)= \frac{N_{j+r+1}}{q N_r }$ is given by \eqref{hj=}.

In the general case, by setting $n'=n-(\sigma_{j+r}-\sigma_r)$, one may think
that
\dsm{h'_j(n)=\frac{N_{j+r}}{N_r }G(p_{j+r},n')}
has a good chance to be the value of $h_j(n)$. However, there are
exceptions.


\renewcommand{\arraystretch}{0.85}
\begin{small}
\begin{figure}[hbt]\label{tab1}
\[
\begin{array}{|r|rrrrrr|}
\hline
 j=&1&2&3&4&5&6\\
\hline
                                    n=2& 2& & & & &\\
                                    3& 3& & & & &\\
                                    4& 3& & & & &\\
                                   5& 5& 6& & & &\\
                                   6& 5& 6& & & &\\
                                  7& 7& 10& & & &\\
                                  8& 7& 15& & & &\\
                                  9& 7& 15& & & &\\
                                10& 7& 21& 30& & &\\
                               11& 11& 21& 30& & &\\
                               12& 11& 35& 42& & &\\
                               13& 13& 35& 42& & &\\
                               14& 13& 35& 70& & &\\
                               15& 13& 35& 105& & & \\
                               16& 13& 55& 105& & &\\
                            17& 17& 55& 105& 210& &\\
                            18& 17& 77& 110& 210& &\\
                            19& 19& 77& 165& 210& &\\
                            20& 19& 91& 165& 210& &\\
                            21& 19& 91& 231& 330& &\\
                            22& 19& 91& 231& 330& &\\
                            23& 23& 91& 385& 462& &\\
                            24& 23& 143& 385& 462& &\\
                            25& 23& 143& 455& 770& &\\
                           26& 23& 143& 455& 1155& &\\
                           27& 23& 143& 455& 1155& &\\
                        28& 23& 187& 455& 1365& 2310& \\
                        29& 29& 187& 715& 1365& 2310& \\
                        30& 29& 221& 715& 1365& 2730& \\
                        31& 31& 221& 1001& 1430& 2730& \\
                        32& 31& 247& 1001& 2145& 2730& \\
                        33& 31& 247& 1001& 2145& 2730& \\
                        34& 31& 253& 1001& 3003& 4290& \\
                        35& 31& 253& 1309& 3003& 4290& \\
                        36& 31& 323& 1309& 5005& 6006& \\
                        37& 37& 323& 1547& 5005& 6006& \\
                       38& 37& 323& 1547& 5005& 10010& \\
                       39& 37& 323& 1729& 5005& 15015& \\
                       40& 37& 391& 1729& 6545& 15015& \\
                    41& 41& 391& 2431& 6545& 15015& 30030\\
                    42& 41& 437& 2431& 7735& 15015& 30030\\
                    43& 43& 437& 2717& 7735& 19635& 30030\\
                    44& 43& 437& 2717& 8645& 19635& 30030\\
                    45& 43& 437& 2717& 8645& 23205& 39270\\
                    46& 43& 493& 2717& 12155& 23205& 39270\\
                    47& 47& 493& 3553& 12155& 25935& 46410\\
                    48& 47& 551& 3553& 17017& 25935& 46410\\
                    49& 47& 551& 4199& 17017& 36465& 51870\\
                    50& 47& 589& 4199& 19019& 36465& 51870\\
\hline
\end{array}
\]
\caption{Table of $h_j(n)$.\label{figtabhj}}
\end{figure}
\end{small}

\def\refname{References}
\bibliography{Bibhden}{}
\bibliographystyle{plain}
\bigskip

\hspace{-2cm}
\begin{minipage}{13cm}{
Marc Del\'eglise, \url{deleglis@math.univ-lyon1.fr}.\\
Jean-Louis Nicolas, \url{jlnicola@in2p3.fr}.\\
Université de Lyon, CNRS,\\
Institut Camille Jordan, Math\'ematiques,
Université Claude Bernard (Lyon 1),\\
21 Avenue Claude Bernard,
F-69622 Villeurbanne c\'edex, France.
}
\end{minipage}

\end{document}